\newtheorem{theorem}{Theorem}
\newtheorem{assumption}{Assumption}
\newtheorem{corollary}{Corollary}
\newtheorem{property}{Property}
\newtheorem{example}{Example}
\newtheorem{lemma}[theorem]{Lemma}
\newtheorem{proposition}[theorem]{Proposition}
\newtheorem{remark}{Remark}
\newcommand{\xmath}[1]{\ensuremath{#1}\xspace}
\newcommand{\borel}{\xmath{\mathcal{B}}}
\newcommand{\spt}{\xmath{\textnormal{Spt}}}
\newcommand{\psirob}{\xmath{\psi_{\mathnormal{rob}}}}
\newcommand{\psild}{\xmath{\psi_{_{\mathnormal{LD}}}}}
\renewcommand{\epsilon}{\xmath{\varepsilon}}
\renewcommand{\emptyset}{\xmath{\varnothing}}
\newcommand{\Mu}{\xmath{\mathfrak{m}_{\mathcal{U}}(S;\overline{\mathbb{R}})}}
\title[Quantifying model risk]{Quantifying Distributional Model Risk
  via Optimal Transport} \author[Blanchet and Murthy]{{\large
    J\MakeLowercase{ose} B\MakeLowercase{lanchet}} \hspace{30pt}
  {\large K\MakeLowercase{arthyek} M\MakeLowercase{urthy}}\\\\
  \textit{\large C\MakeLowercase{olumbia} U\MakeLowercase{niversity}}}
\address{Columbia University, Department of Industrial Engineering \&
  Operations Research, S. W. Mudd Building, 500 W 120th St., New York,
  NY 10027, United States.}  \email{\{jose.blanchet,
  karthyek.murthy\}@columbia.edu} \date{}
\thanks{The authors gratefully acknowledge support from Norges Bank
  Investment Management and NSF grant CMMI 1436700.}
\begin{document}

\maketitle
\ \vspace{-40pt}
\begin{abstract}
 This paper deals with the problem of quantifying the impact of model
  misspecification when computing general expected values of
  interest. The methodology that we propose is applicable in great
  generality, in particular, we provide examples involving
  path-dependent expectations of stochastic processes. Our approach
  consists in computing bounds for the expectation of interest
  regardless of the probability measure used, as long as the measure
  lies within a prescribed tolerance measured in terms of a flexible
  class of distances from a suitable baseline model. These distances,
  based on optimal transportation between probability measures,
  include Wasserstein's distances as particular cases. The proposed
  methodology is well-suited for risk analysis, as we demonstrate with
  a number of applications. We also discuss how to estimate the
  tolerance region non-parametrically using Skorokhod-type embeddings
  in some of these applications.\\\\
\smallskip
\noindent \textbf{Keywords.} Model risk, distributional robustness,
transport metric, Wasserstein distances, duality, Kullback-Liebler
divergences, ruin probabilities, diffusion approximations.\\\\
\smallskip
\noindent \textbf{MSC} 60G07, 60F99, and 62P05.
\end{abstract}

\section{Introduction.}
\noindent One of the most ubiquitous applications of probability is
the evaluation of performance or risk by means of computations such as
$Ef\left( X\right) =\int fdP$ for a given probability measure $P$, a
function $f$, and a random element $X$. In classical applications,
such as in finance, insurance, or queueing analysis, $X$ is a
stochastic process and $f$ is a path dependent functional.

The work of the modeler is to choose a probability model $P$ which is
both descriptive and, yet, tractable. However, a substantial
challenge, which arises virtually always, is that the model used, $P$,
will in general be subject to misspecification; either because of lack
of data or by the choice of specific parametric distributions.
    
The goal of this paper is to develop a framework to assess the impact
of model misspecification when computing $Ef\left( X\right)$.

The importance of constructing systematic methods that provide
performance estimates which are robust to model misspecification is
emphasized in the academic response to the Basel Accord 3.5, 
\cite{risks2010025}: In its broadest sense,
robustness has to do with (in)sensitivity to underlying model
deviations and/or data changes. Furthermore, here, a whole new field
of research is opening up; at the moment, it is difficult to point to
the right approach.

The results of this paper allow the modeler to provide a bound for the
expectation of interest regardless of the probability measure used as
long as such measure remains within a prescribed tolerance of a
suitable baseline model.

The bounds that we obtain are applicable in great generality, allowing
us to use our results in settings which include continuous-time
stochastic processes and path dependent expectations in various
domains of applications, including insurance and queueing.

Let us describe the approach that we consider more precisely. Our
starting point is a technique that has been actively pursued in the
literature (to be reviewed momentarily), which involves considering a
family of plausible models $\mathcal{P}$, and computing
distributionally robust bounds as the solution to the optimization
problem
\begin{equation}
\sup_{P\in\mathcal{P}}\int fdP, \label{Obj-Paper}%
\end{equation}
over all probability measures in $\mathcal{P}$. The motivation, as
mentioned earlier, is to measure the highest possible risk regardless
of the probability measure in $\mathcal{P}$ used. A canonical object
that appears naturally in specifying the family $\mathcal{P}$ is the
neighborhood $\{ P : d(\mu,P) \leq \delta\},$ where $\mu$ is a chosen
baseline model and $\delta$ is a non-negative tolerance level. Here,
$d$ is a metric that measures discrepancy between probability
measures, and the tolerance $\delta$ interpolates between no ambiguity
$(\delta=0)$ and high levels of model uncertainty ($\delta$
large). The family of plausible models, $\mathcal{P}$, can then be
specified as a single neighborhood (or) a collection of such plausible
neighborhoods. In this paper, we choose $d$ in terms of a transport
cost (defined precisely in Section \ref{Sec-Gen-Results}), and analyze
the solvability of (\ref{Obj-Paper}) and discuss various implications.

Being a flexible class of distances that include the popular
Wasserstein distances as a special case, transport costs allow easy
interpretation in terms of minimum cost associated with transporting
mass between probability measures, and have been widely used in
probability theory and its applications (see, for example,
\citet{rachev1998mass, rachev1998massII, villani2008optimal,
  ambrosio2003optimal} for a massive collection of classical
applications and \citet{Barbour_Xia, Gozlan, nguyen2013,
  wang2012supervised, Fournier2014, canas2012learning,
  solomon2014wasserstein, Wasserstein15} for a
sample of growing list of new applications).

%%%%%%%%%%%%%%%%%%%%%

Relative entropy (or) Kullback-Liebler (KL) divergence, despite not
being a proper metric, has been the most popular choice for $d$,
thanks to the tractability of (\ref{Obj-Paper}) when $d$ is chosen as
relative entropy or other likelihood based discrepancy measures (see
\citet{MAFI:MAFI12050, lam2013robust, Glasserman_Xu, MR3299141} for
the use in robust performance analysis, and \citet{Hans_Sarg,
  iyengar2005robust, NE:05, lim2007relative, Jain2010, Ben_Tal,
  Wang2015, Jiang2015, hu2012kullback, doi:10.1287/educ.2015.0134} and
references therein for applications to distributionally robust
optimization). Our study in this paper is directly motivated to
address the shortcoming that many of these earlier works acknowledge:
the absolute continuity requirement of relative entropy; there can be
two probability measures $\mu$ and $\nu$ that produce the same samples
with high probability, despite having the relative entropy between
$\mu$ and $\nu$ as infinite. Such absolute continuity requirements
could be very limiting, particularly so when the models of interest
are stochastic processes defined over time. For instance, Brownian
motion, because of its tractability in computing path-dependent
expectations, is used as approximation of piecewise linear or
piecewise constant processes (such as random walk). However, the use
of the relative entropy would not be appropriate to accommodate these
settings because the likelihood ratio between Brownian motion and any
piecewise differentiable process is not well defined.

Another instance which shows the limitations of the KL divergence
arises when using an It\^{o} diffusion as a baseline model. In such
case, the region $\mathcal{P}$ contains only It\^{o} diffusions with
the same volatility parameter as the underlying baseline model, thus
failing to model volatility uncertainty.
% . This
% does not model volatility uncertainty, which however is prevalent in
% practice (see, for example, \citet{avellaneda1995pricing,
%   10.2307/2667242, MAFI:MAFI005, 10.2307/41408083, Epstein01072013,
%   Epstein2014269}).
%%%%%%%%%%%%%%%%%%%%%%%%%%%%%%%%%

The use of a distance $d$ based on an optimal transport cost (or)
Wasserstein's distance, as we do here, also allows to incorporate the
use of tractable surrogate models such as Brownian motion. For
instance, consider a classical insurance risk problem in which the
modeler has sufficient information on claim sizes (for example, in car
insurance) to build a non-parametric reserve model. But the modeler is
interested in path-dependent calculations of the reserve process (such
as ruin probabilities), so she might decide to use Brownian motion
based approximation as a tractable surrogate model which allows to
compute $Ef\left( X\right )$ easily. A key feature of the
Wasserstein's distances, is that it is computed in terms of a
so-called optimal coupling.  So, the modeler can use any good coupling
to provide a valid bound for the tolerance parameter $\delta$.

The use of tractable surrogate models, such as Brownian motion,
diffusions, and reflected Brownian motion, etc., has enabled the
analysis of otherwise intractable complex stochastic systems. As we
shall illustrate, the bounds that we obtain have the benefit of being
computable directly in terms of the underlying tractable surrogate
model. In addition, the calibration of the tolerance parameter takes
advantage of well studied coupling techniques (such as Skorokhod
embedding). So, we believe that the approach we propose naturally
builds on the knowledge that has been developed by the applied
probability community.

We summarize our main contributions in this paper below:

a) Assuming that $X$ takes values in a Polish space, and using a wide
range of optimal transport costs (that include the popular
Wasserstein's distances as special cases) we arrive at a dual
formulation for the optimization problem in \eqref{Obj-Paper}, and
prove strong duality -- see Theorem \ref{THM-STRONG-DUALITY}.

b) Despite the infinite dimensional nature of the optimization problem
in \eqref{Obj-Paper}, we show that the dual problem admits a one
dimensional reformulation which is easy to work with. We provide
sufficient conditions for the existence of an optimizer $P^\ast$ that
attains the supremum in \eqref{Obj-Paper} -- see Section
\ref{Sec-Primal-Opt-Existence}.  Using the result in a), for upper
semicontinuous $f$, we show how to characterize an optimizer
$P^{\ast}$ in terms of a coupling involving $\mu$ and the chosen
transport cost -- see Remark \ref{Rem-Opt-Measure-Structure} and
Section \ref{Eg-Rob-Probabilities}.

c) We apply our results to various problems such as robust evaluation
of ruin probabilities using Brownian motion as a tractable surrogate
(see Section \ref{Sec-App-LC}), general multidimensional first-passage
time probabilities (see Section \ref{Sec-App-FPP}), and optimal
decision making in the presence of model ambiguity (see Section
\ref{Sec-App-St-Opt}).

d) We discuss a non-parametric method, based on a Skorokhod-type
embedding, which allows to choose $\delta$ (these are particularly
useful in tractable surrogate settings discussed earlier). The
specific discussion of the Skorokhod embedding for calibration is
given in Appendix \ref{Appendix-Embeddings}. We also discuss how
$\delta$ can be chosen in a model misspecification context, see the
discussion at the end of Section \ref{Sec-App-FPP}.

In a recent paper, \citet{esfahani2015data}, the authors also consider
distributionally robust optimization using a very specific
Wasserstein's distance. Similar formulations using Wasserstein
distance based ambiguity sets have been considered in
\citet{MR2354780, MR2874755} and \citet{Zhao_Guan} as well. The form
of the optimization problem that we consider here is basically the
same as that considered in \citet{esfahani2015data}, but there are
important differences that are worth highlighting. In
\citet{esfahani2015data}, the authors concentrate on the specific case
of baseline measure being empirical distribution of samples obtained
from a distribution supported in $\mathbb{R}^d,$ and the function $f$
possessing a special structure. In contrast, we allow the baseline
measure $\mu$ to be supported on general Polish spaces and study a
wide class of functions $f$ (upper semicontinuous and integrable with
respect to $\mu$).  The use of the Skorokhod embeddings that we
consider here, for calibration of the feasible region, is also novel
and not studied in the present literature.
% In \citet{esfahani2015data}, the authors discuss the importance of
% developing procedures that allow modeler to choose the baseline
% measure and function $f$ without restrictive constraints, and the
% results of this paper precisely achieve this objective.

% . The results
% in this paper indeed achieve this objective, and allow the modeler to
% work in settings as general as computing path-dependent expectations
% of stochastic processes taking values in general Polish spaces.

%  to take values
% in general Polish spaces and $f$ to be any upper-semicontinuous
% function. Unlike \citet{esfahani2015data}, where the baseline measure
% $\mu$ is restricted to be the uniform distribution on finitely many
% observed samples, the results in this paper allow modeler to choose
% the baseline measure without any restrictions, other than the logical
% requirement that the function $f$ is integrable with respect to $\mu.$

% assume that the random variable $X$ takes values on a
% Polish space and study a wide class of functions
% $f\left( \cdot\right) $ (upper semicontinuous and integrable with
% respect to $\mu$). 
% % distributions that are supported on subsets of $R^{d}$
% % for fixed $d$. They study special types of functions
% % $f\left( \cdot\right) $ and their motivation is mostly to obtain
% % approximations based on empirical distributions.
% The use of the Skorokhod embeddings that we consider here, for
% calibration of the feasible region, are also novel and not studied in
% \citet{esfahani2015data}.

Another recent paper, \citet{Gao_Kleyweget}, (which is an independent
contribution made public a few days after we posted the first version
of this paper in arXiv), presents a very similar version of the strong
duality result shown in this paper. One difference that is immediately
apparent is that the authors in \citet{Gao_Kleyweget} concentrate on
the case in which a specific cost function $c(\cdot,\cdot)$ used to
define transport cost is of the form $c(x,y)=d(x,y)^p,$ for some
$p \geq 1$ and metric $d(\cdot,\cdot),$ whereas we only impose that
$c(\cdot,\cdot)$ is lower semicontinuous. {Allowing for general lower
  semicontinuous cost functions that are different from $d(x,y)^p$ is
  useful, as demonstrated in applications towards distributionally
  robust optimization and machine learning in
  \citet{blanchet2016robust}.}  Another important difference is that,
as far as we understand, the proof given in \citet{Gao_Kleyweget}
appears to implicitly assume that the space $S$ in which the random
element of interest $X$ takes values is locally compact. For example,
the proof of Lemma 2 in \citet{Gao_Kleyweget} and other portions of
the technical development appear to use repeatedly that a closed norm
ball is compact. However, this does not hold in any infinite
dimensional topological vector space\footnote{see, for example,
  Example 3.100 and Theorem 5.26 in \citet{aliprantis1999infinite}},
thus excluding important function spaces like $C[0,T]$ and $D[0,T]$
(denoting respectively the space of continuous and c\`{a}dl\`{a}g
functions on interval $[0,T]$), that are at the center of our
applications. As mentioned earlier, our focus in this paper is on
stochastic process applications, whereas the authors in
\citet{Gao_Kleyweget} put special emphasis on stochastic optimization
in $\mathbb{R}^d.$

The rest of the paper is organized as follows: In Section
\ref{Sec-Gen-Results} we shall introduce the assumptions and discuss
our main result, whose proof is given in Section
\ref{Sec-Duality-Proof}. The proof is technical due to the level of
generality that is considered. More precisely, it is the fact that the
cost functions used to define optimal transport costs need not be
continuous, and the random elements that we consider need not take
values in a locally compact space (like $\mathbb{R}^d$) which
introduces technical complications, including issues like
measurability of a key functional appearing in the dual
formulation. In preparation to the proof, and to help the reader gain
some intuition of the result, we present a one dimensional example
first, in Section \ref{Sec-App-LC}.  Then, after providing the proof
of our main result in Section \ref{Sec-Duality-Proof} and conditions
for existence of the worst-case probability distribution that attains
the supremum in \eqref{Obj-Paper} in Section
\ref{Sec-Primal-Opt-Existence}, we discuss additional examples in
Section \ref{Sec-App}.

%While the initial efforts were directed towards computing worst-case
%risk over ambiguous family $\mathcal{P}$ specified by certain
%observable moment restrictions, it was quickly realized that the
%approach is not scalable and yields conservative bounds. Previously,
%when the calibration procedure for a parametric model was
%unsatisfactory, it was common to compute sensitivities of expected
%value of risk with respect to small change in parameters (see
%\citet{L'Ecuyer:1991:ODE:304238.304281} and references therein). In a
%similar spirit, there has been intense research over the last decade
%in computing quantities as in \eqref{Obj-Paper}, with
%$\mathcal{P} = \{ P: d(\mu,P) \leq \delta\}$ that serve as proxies for
%non-parametric derivatives (see \citet{Hans_Sarg},
%\citet{MAFI:MAFI12050}, \citet{Ben_Tal}, \citet{lam2013robust},
%\citet{Glasserman_Xu}, \citet{hu2012kullback}).

\section{Our Main Result.}
\label{Sec-Gen-Results}
\noindent In order to state our main result, we need to introduce some
notation and define the optimal transport cost between probability
measures.
\subsection{Notation and Definitions.}
\label{Sec-Notn-Assump} 
For a given Polish space $S,$ we use $\mathcal{B}(S)$ to denote the
associated Borel $\sigma$-algebra. Let us write $P(S)$ and $M(S),$
respectively, to denote the set of all probability measures and finite
signed measures on $(S,\mathcal{B}(S))$. {For any $\mu \in P(S),$ let
  $\mathcal{B}_\mu(S)$ denote the completion of $\mathcal{B}(S)$ with
  respect to $\mu;$ the unique extension of $\mu$ to
  $\mathcal{B}_\mu(S)$ that is a probability measure is also denoted
  by $\mu,$ and the measure $\mu$ in $\int \phi d\mu$ is to be
  interpreted as this extension defined on $\mathcal{B}_\mu(S)$
  whenever
  $\phi:(S,\mathcal{B}(S)) \rightarrow (\mathbb{R},
  \mathcal{B}(\mathbb{R}))$ is not measurable, but instead,
  $\phi: (S,\mathcal{B}_\mu(S)) \rightarrow (\mathbb{R},
  \mathcal{B}(\mathbb{R}))$ is measurable.} We shall use $C_{b}(S)$ to
denote the space of bounded continuous functions from $S$ to
$\mathbb{R},$ and $\spt(\mu)$ to denote the support of a probability
measure $\mu.$ For any $\mu \in P(S)$ and $p \geq 1,$ $L^{p}(d\mu)$
denotes the collection of Borel measurable functions
$h:S \rightarrow \mathbb{R}$ such that $\int|h|^{p} d\mu<\infty.$ {The
  universal $\sigma-$algebra is defined by
  $\mathcal{U}(S) = \cap_{\mu \in P(S)} B_\mu(S).$ We use
  $\overline{\mathbb{R}} = \mathbb{R} \cup \{-\infty, \infty\}$ to
  denote the extended real line, and $\Mu$ to denote the collection of
  measurable functions
  $\phi: (S,\mathcal{U}(S)) \rightarrow
  (\overline{\mathbb{R}},\mathcal{B}(\overline{\mathbb{R}})).$ As
  $\mathcal{U}(S) \subseteq \mathcal{B}_\mu(S)$ for every
  $\mu \in P(S),$ any $\phi \in \Mu$ is also measurable when $S$ and
  $\overline{\mathbb{R}}$ are equipped, respectively, with the
  $\sigma$-algebras $\mathcal{B}_\mu(S)$ and
  $\mathcal{B}(\overline{\mathbb{R}}).$ Consequently, the integral
  $\int \phi d\mu$ is well-defined for any non-negative
  $\phi \in \Mu.$} In addition, for any $\mu \in P(S),$ we say that
$\mu$ is \textit{concentrated} on a set $A \in B_\mu(S)$ if
$\mu(A) = 1.$\vspace{-10pt}
  \subsubsection*{Optimal transport cost.}  For any two probability
  measures $\mu_1$ and $\mu_2$ in $P(S)$, let $\Pi (\mu_1,\mu_2)$
  denote the set of all joint distributions with $\mu_1$ and $\mu_2$
  as respective marginals. In other words, the set $\Pi(\mu_1,\mu_2)$
  represents the set of all couplings (also called transport plans)
  between $\mu_1$ and $\mu_2.$ Throughout the paper, we assume that
\begin{assumption}[A1]
  $c:S\times S\rightarrow\mathbb{R}_+$ is a nonnegative lower
  semicontinuous function satisfying $c(x,y)=0$ if and only if
  $x = y.$
\end{assumption}
Then the \textit{optimal transport cost} associated with the cost
function $c$ is defined as,
\begin{align}
  d_{c}(\mu_1, \mu_2):=\inf\left\{ \int cd\pi:\pi\in\Pi(\mu_1,\mu_2)\right\} ,
  \quad \mu_1, \mu_2 \in P(S).
\label{Defn-Transport-Cost}
\end{align}
Intuitively, the quantity $c(x,y)$ specifies the cost of transporting
unit mass from $x$ in $S$ to another element $y$ of $S.$ Given a lower
semicontinuous cost function $c(x,y)$ and a coupling
$\pi \in \Pi(\mu_1, \mu_2),$ the integral $\int cd\pi$ represents the
expected cost associated with the coupling (or transport plan) $\pi.$
For any non-negative lower semi-continuous cost function $c,$ it is
known that the `optimal transport plan' that attains the infimum in
the above definition exists (see Theorem 4.1 of
\citet{villani2008optimal}), and therefore, the optimal transport cost
$d_c(\mu_1,\mu_2)$ corresponds to the lowest transport cost that is
attainable among all couplings between $\mu_1$ and $\mu_2.$

If the cost function $c$ is symmetric (that is, $c(y,x) = c(x,y)$ for
all $x,y$), and it satisfies triangle inequality, one can show that
the minimum transportation cost $d_c(\cdot,\cdot)$ defines a metric on
the space of probability measures. For example, if $S$ is a Polish
space equipped with metric $d,$ then taking the cost function to be
$c(x,y) = d(x,y),$ renders the transport cost $d_c(\mu,\nu)$ to be
simply the Wasserstein distance of first order between $\mu$ and
$\nu.$ % In general, we refer
% $d_c(\cdot,\cdot)$ to be the \textit{optimal transport cost}
% associated with the cost function $c.$ 
Unlike the Kullback-Liebler divergence (or) other likelihood based
divergence measures, the Wasserstein distance is a proper metric on
the space of probability measures. More importantly, Wasserstein
distances do not restrict all the probability measures in the
neighborhoods such as
%\begin{align*}
  $\left\{ \nu \in P(S) : d_c(\mu,\nu) \leq \delta \right\}$
%\end{align*}
  to share the same support as that of $\mu$ % In addition, under
% appropriate moment assumptions, Wasserstein distances metrize weak
% convergence of probability measures
(see, for example, Chapter 6 in
\citet{villani2008optimal} for properties of Wasserstein distances).

\subsection{Primal Problem.}
\label{Sec-primal-prob}
Underlying our discussion we have a Polish space $S$, which is the
space where the random elements of the given probability model
$\mu \in P(S)$ takes values. Given $\delta > 0,$ the objective, as
mentioned in the Introduction, is to evaluate
\begin{align*}
  \sup \left\{ \int f d\nu: d_c(\mu,\nu) \leq \delta \right\},
\end{align*}
for any function $f$ that satisfies the assumption that
\begin{assumption}[A2]
  $f\in L^{1}(d\mu)$ is upper semicontinuous.
\end{assumption} 
% Throughout the paper we assume that $\mu\in P(S)$ is a given reference
% probability measure. Given $\delta > 0,$ the objective, as
% mentioned in the Introduction, is to evaluate
{As the integral $\int f d\nu$ may equal $\infty - \infty$ for some
  $\nu$ satisfying $d_c(\mu,\nu) \leq \delta,$ only for the purposes
  of interpreting the supremum above, we let
  $\sup\{\infty, \infty - \infty\} = \infty$\footnote{{This is
      because, under the assumption that $f \in L^1(d\mu),$ for every
      probability measure $\nu$ such that $d_c(\mu,\nu) \leq \delta$
      and $\int f^- d\nu = \infty,$ one can identify a probability
      measure $\nu'$ satisfying $d_c(\mu,\nu') \leq \delta,$
      $\int f^+ d\nu' \geq \int f^+ d\nu,$ and
      $\int f^-d\nu' < \infty = \int f^-d\nu;$ see Corollary
      \ref{Cor-Notn-Clar} in Appendix \ref{Appendix-Proofs} for a
      simple construction of such a measure $\nu'$ from measure $\nu.$
      Consequently, when computing the supremum of $\int fd\nu,$ it is
      meaningful to restrict our attention to probability measures
      $\nu$ satisfying $\int f^-d\nu < \infty,$ and interpret
      $\sup \left\{ \int f d\nu: d_c(\mu,\nu) \leq \delta \right\}$ as
      $\sup\{\int f d\nu: d_c(\mu,\nu) \leq \delta, \int f^-d\nu <
      \infty\}.$ The notation,
      $\sup\{\infty, \infty-\infty\} = \infty,$ simply facilitates
      this interpretation in this context. % Please refer Lemma
      % Corollary \ref{Cor-Notn-Clar} in Appendix \ref{Appendix-Proofs}
      % for a simple construction of such a probability measure $\nu'$
      % for any given $\nu$ satisfying $d_c(\mu,\nu) \leq \delta$ and
      % $\int f^- d\nu = \infty.$
    }
%  Let $A^+ = \{x \in S: f(x) \geq 0\}$ and
  % $A^- = S \setminus A^+;$ then by letting
  % $\nu'(\ \cdot \ |A^+) := \nu(\ \cdot\ |A^+),$
  % $\nu'(\ \cdot\ |A^-) := \mu(\ \cdot\ |A^-)$ and
  % $\nu'(A^+) := \nu(A^+),$ we have a
  }.}  The value of this optimization problem provides a bound to the
expectation of $f,$ regardless of the probability measure used, as
long as the measure lies within $\delta$ distance (measured in terms
of $d_c$) from the baseline probability measure $\mu.$ In applied
settings, $\mu$ is the probability measure chosen by the modeler as
the baseline distribution, and the function $f$ corresponds to a risk
functional (or) performance measure of interest, for example, expected
losses, probability of ruin, etc.

As the infimum in the definition of the optimal transport cost $d_c$
is attained for any given non-negative lower semicontinuous cost
function $c$ (see Theorem 4.1 of \citet{villani2008optimal}), we
rewrite the quantity of interest as below:
\begin{align*}
  I    :=\sup\left\{  \int fd\nu: \ d_{c}(\mu,\nu)\leq\delta\right\}
       =\ \sup\bigg\{  \int f(y)d\pi(x,y): \pi\in \bigcup_{\nu \in
  P(S)}  \hspace{-6pt}\Pi(\mu,\nu),\ \int cd\pi\leq
       \delta\bigg\},\nonumber
\end{align*}
which, in turn, is an optimization problem with linear objective
function and linear constraints. If we let
\[I(\pi):=\int f(y)d\pi(x,y) \quad \text{ and } \quad 
 \Phi_{\mu,\delta}:= \bigg\{\pi\in \bigcup_{\nu \in
  P(S)} \hspace{-6pt} \Pi(\mu,\nu): \int cd\pi\leq\delta \bigg\}\] for
brevity, then
\begin{equation}
I=\sup\left\{  I(\pi):\pi\in\Phi_{\mu,\delta}\right\}
\label{EQ-Primal-Problem}
\end{equation}
is the quantity of interest.

\subsubsection{The Dual Problem and Weak Duality.}
\label{Sec-Dual-Problem}
Define $\Lambda_{c,f}$ to be the collection of all pairs
$(\lambda,\phi )$ such that $\lambda$ is a non-negative real number,
{$\phi \in \Mu$} and
\begin{equation}
  \phi(x)+\lambda c(x,y)\geq f(y),\text{ for all }x,y.
\label{Dual-Condition}
\end{equation}
For every such $(\lambda, \phi) \in \Lambda_{c,f}$ consider
\[
J(\lambda,\phi):=\lambda\delta+\int\phi d\mu.
\]
As $\int fd\mu$ is finite and $\phi \geq f$ whenever $\phi$ satisfies
\eqref{Dual-Condition}, the integral in the definition of
$J(\lambda, \phi)$ avoids ambiguities such as $\infty - \infty$ for
any $(\lambda,\phi) \in \Lambda_{c,f}.$ Next, for any
$\pi\in\Phi_{\mu,\delta}$ and $(\lambda,\phi)\in\Lambda_{c,f},$ see
that
\begin{align*}
J(\lambda,\phi)  &  =\lambda\delta+\int\phi(x)d\pi(x,y)\\
&  \geq\lambda\delta+\int\big(f(y)-\lambda c(x,y)\big)d\pi(x,y)\\
&  =\int f(y)d\pi(x,y)+\lambda\left(  \delta-\int c(x,y)d\pi(x,y)\right) \\
&  \geq\int f(y)d\pi(x,y)\\
&  =I(\pi).
\end{align*}
Consequently, we have
\begin{equation}
J:=\inf\big\{J(\lambda,\phi):(\lambda,\phi)\in\Lambda_{c,f}\big\}\geq I.
\label{EQ-WEAK-DUALITY}
\end{equation}
Following the tradition in optimization theory, we refer to the above
infimum problem that solves for $J$ in \eqref{EQ-WEAK-DUALITY} as the
dual to the problem that solves for $I$ in \eqref{EQ-Primal-Problem},
which we address as the primal problem. Our objective in the next
section is to identify whether the primal and the dual problems have
same value (that is, do we have that $I=J$?).

\subsection{Main Result: Strong Duality Holds.}

Recall that the feasible sets for the primal and dual problems, respectively,
are:
\begin{subequations}
\begin{align}
  \Phi_{\mu,\delta}  &  := \bigg\{  \pi\in \bigcup_{\nu \in P(S)}
                       \hspace{-4pt}\Pi(\mu, \nu): \ \int c 
                       d\pi\leq\delta\bigg\}  \text{ and }\label{Primal-Feas-Set}\\
  \Lambda_{c,f}  &  := \big\{ (\lambda, \phi): \ \lambda\geq0, \ \phi
                   \in \Mu, \ \phi(x) + \lambda c(x,y) \geq f(y) \text{ for all } x,y \in
                   S\big\}. \label{Dual-Feas-Set}%
\end{align}
The corresponding primal and dual problems are
\end{subequations}
\begin{subequations}
\begin{align*}
I := \sup\left\{  \int f(y) d\pi(x,y) : \pi\in\Phi_{\mu,\delta} \right\}
\text{ and } J := \inf\left\{  \lambda\delta+ \int\phi d\mu: (\lambda, \phi)
\in\Lambda_{c,f}\right\}  
\end{align*}
For brevity, we have identified the primal and dual objective
functions as $I(\pi)$ and $J(\lambda, \phi)$ respectively. As the
identified primal and dual problems are infinite dimensional, it is
not immediate whether they have same value (that is, is $I = J$?). The
objective of the following theorem is to verify that, for a broad
class of performance measures $f,$ indeed $I$ equals
$J.$ % Refer Section
% \ref{Sec-Notn-Assump} for a review of assumptions under which the conclusions
% of Theorem \ref{THM-STRONG-DUALITY} hold.
\end{subequations}
\begin{theorem}
  Under the Assumptions (A1) and (A2), \ \newline(a) $I=J$. In other
  words,
\[
\sup\big\{I(\pi):\pi\in\Phi_{\mu,\delta}\big\}=\inf\big\{J(\lambda
,\phi):(\lambda,\phi)\in\Lambda_{c,f}\big\}.
\]
(b) For any $\lambda\geq0,$ define $\phi_{_{\lambda}}:S\rightarrow
\mathbb{R}\cup\{\infty\}$ as follows:
\[
\phi_{_{\lambda}}(x):=\sup_{y\in S}\big\{f(y)-\lambda c(x,y)\big\}. 
\]
There exists a dual optimizer of the form
$(\lambda, \phi_{_{\lambda}}),$ for some $\lambda \geq 0.$ In
addition, any feasible $\pi ^{\ast}\in\Phi_{\mu,\delta}$ and
$(\lambda^{\ast},\phi_{_{\lambda^{\ast}}})\in\Lambda_{c,f}$ are primal
and dual optimizers, satisfying $I(\pi^\ast) = J(\lambda^\ast,
\phi_{_{\lambda^\ast}})$, if and only if 
\begin{subequations}
\begin{align}
&  f(y)-\lambda^{\ast}c(x,y)=\sup_{z\in S}\big\{f(z)-\lambda^{\ast
}c(x,z)\big\},\quad\pi^{\ast}a.s.,\text{ and }\label{Comp-Slack-1}\\
&  \lambda^{\ast}\left(  \int c(x,y)d\pi^{\ast}(x,y)-\delta\right)  =0.
\label{Comp-Slack-2}%
\end{align}
\end{subequations}
\label{THM-STRONG-DUALITY}
\end{theorem}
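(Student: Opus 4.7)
The plan is to prove both parts together by reducing the infinite-dimensional dual to a one-dimensional minimization in the Lagrange multiplier $\lambda$ and then invoking a Fenchel/biconjugate argument. First I would observe that, for each $\lambda\ge 0$, $\phi_{\lambda}(x)=\sup_{y\in S}\{f(y)-\lambda c(x,y)\}$ is pointwise the smallest $\phi$ making $(\lambda,\phi)$ dual-feasible, so
\[
J=\inf_{\lambda\ge 0}\Big\{\lambda\delta+\int\phi_{\lambda}\,d\mu\Big\},
\]
provided $\phi_{\lambda}\in\Mu$. Universal measurability of $\phi_{\lambda}$ is exactly where the $\mathcal{U}(S)$ machinery enters: since $(x,y)\mapsto f(y)-\lambda c(x,y)$ is upper semicontinuous on the Polish product $S\times S$, each upper level set is Borel, and its projection onto the first coordinate is analytic and therefore $\mathcal{U}(S)$-measurable; this projection coincides with $\{\phi_{\lambda}>t\}$, so $\phi_{\lambda}\in\Mu$.

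The heart of strong duality is the Lagrangian identity
\[
\sup\bigg\{\int\big(f(y)-\lambda c(x,y)\big)\,d\pi(x,y):\pi\in\!\!\bigcup_{\nu\in P(S)}\!\!\Pi(\mu,\nu)\bigg\}=\int\phi_{\lambda}\,d\mu,\qquad\lambda\ge 0.
\]
The $\le$ direction is the pointwise bound $f(y)-\lambda c(x,y)\le\phi_{\lambda}(x)$ integrated against any such $\pi$. For the $\ge$ direction I would invoke a Jankov--von Neumann measurable selection to obtain, for each $\epsilon>0$, a universally measurable $T_{\epsilon}:S\to S$ satisfying $f(T_{\epsilon}(x))-\lambda c(x,T_{\epsilon}(x))\ge\phi_{\lambda}(x)-\epsilon$, and push $\mu$ forward along $(\mathrm{id},T_{\epsilon})$. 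Writing $V(\delta)$ for the primal value, the identity says precisely that the concave conjugate satisfies $V^{*}(\lambda)=\int\phi_{\lambda}d\mu$. Since $V$ is concave in $\delta$ (by convex combinations of feasible couplings), the biconjugate identity $V=V^{**}$ -- valid wherever $V$ is upper semicontinuous -- gives $I=V(\delta)=\inf_{\lambda}\{\lambda\delta+V^{*}(\lambda)\}=J$. The main obstacle I anticipate is verifying upper semicontinuity of $V$ without coercivity assumptions on $c$, since the second marginals of couplings in $\Phi_{\mu,\delta}$ need not be tight. I would bypass this by approximating $f$ and $c$ with bounded upper/lower semicontinuous functions supported on compact sets (exploiting Prokhorov tightness of $\mu$), proving strong duality in the bounded, tight case via a Sion-type minimax on $P(S)\times[0,\infty)$, and passing to the limit using $f\in L^{1}(d\mu)$ together with monotone/dominated convergence on the dual side.

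For (b), once strong duality holds, existence of a dual optimizer of the form $(\lambda^{\ast},\phi_{\lambda^{\ast}})$ follows from observing that $\psi(\lambda):=\lambda\delta+\int\phi_{\lambda}d\mu$ is convex (sup-of-affine) and lower semicontinuous (Fatou), while the bound $\phi_{\lambda}(x)\ge f(x)$ (attained at $y=x$ since $c(x,x)=0$) combined with $\delta>0$ gives $\psi(\lambda)\ge\lambda\delta+\int f\,d\mu\to\infty$, so the infimum is attained at some $\lambda^{\ast}\in[0,\infty)$. The complementary-slackness characterization then emerges from inspecting the weak-duality chain
\[
J(\lambda^{\ast},\phi_{\lambda^{\ast}})=\lambda^{\ast}\delta+\int\phi_{\lambda^{\ast}}(x)\,d\pi^{\ast}\ge\int\big(f(y)-\lambda^{\ast}c(x,y)\big)\,d\pi^{\ast}+\lambda^{\ast}\delta=I(\pi^{\ast})+\lambda^{\ast}\Big(\delta-\int c\,d\pi^{\ast}\Big)\ge I(\pi^{\ast}).
\]
Equality throughout forces $\phi_{\lambda^{\ast}}(x)=f(y)-\lambda^{\ast}c(x,y)$ for $\pi^{\ast}$-almost every $(x,y)$, which is \eqref{Comp-Slack-1}, together with $\lambda^{\ast}\big(\int c\,d\pi^{\ast}-\delta\big)=0$, which is \eqref{Comp-Slack-2}; the converse implication is immediate by reading the chain in reverse.
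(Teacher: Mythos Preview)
Your reduction of $J$ to $\inf_{\lambda\ge 0}\{\lambda\delta+\int\phi_\lambda\,d\mu\}$, the universal-measurability argument for $\phi_\lambda$ via analytic projections, and all of part~(b) match the paper. The organization of part~(a) is genuinely different. The paper does not argue via the biconjugate of $\delta\mapsto V(\delta)$; instead it (i) proves $I=J$ for compact $S$ by Fenchel duality on $C_b(S\times S)$ versus $M(S\times S)$, (ii) upgrades to merely lower semicontinuous $c$ by writing $c=\sup_n c_n$ with continuous $c_n$, (iii) for general $S$, exhausts $S_\pi\times S_\pi$ by compacts $S_n\times S_n$ and stitches together the restricted optima to obtain $\inf_{(\lambda,\phi)\in\Lambda(S_\pi\times S_\pi)}J(\lambda,\phi)\le I$ for each admissible $\pi$, and (iv) closes with a Sion minimax on $T(\lambda,\pi)=\lambda\delta+\int\sup_{y\in S_\pi}\{f(y)-\lambda c(x,y)\}\,d\mu$ over the compact interval $[0,\lambda_{\max}]$ times a convex set $E$ of measures, followed by a Jankov--von Neumann selection lemma that evaluates $\sup_{\pi\in E}T(\lambda,\pi)=\lambda\delta+\int\phi_\lambda\,d\mu$. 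Your biconjugate route is conceptually more direct and avoids the compact-first detour; the paper's route, in exchange, never needs to analyze $V$ as a function of $\delta$ and confines all integrability bookkeeping to the restricted compact problems where everything is bounded.

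Two places in your plan need tightening. First, the obstacle you single out---upper semicontinuity of $V$---is not really an obstacle: $V$ is concave and nondecreasing on $[0,\infty)$ with $V(0)=\int f\,d\mu>-\infty$, so either $V\equiv+\infty$ on $(0,\infty)$ (whence $I=J=\infty$ trivially) or $V$ is finite and therefore automatically continuous on $(0,\infty)$; since $\delta>0$, the biconjugate identity then follows from a supporting line of nonnegative slope, and your proposed bypass is unnecessary (and would in any case founder on the non-compactness of $P(S)$ unless you first restrict to compact $S$, which is exactly the paper's detour). Second---and this is the genuine gap---your Lagrangian identity needs care when you connect it to $V^{*}$: to identify $\int\phi_\lambda\,d\mu$ with $\sup_{\delta'\ge 0}\{V(\delta')-\lambda\delta'\}$ you must produce near-optimal couplings $\pi$ for which both $\int c\,d\pi<\infty$ and $\int f^{-}(y)\,d\pi<\infty$, since otherwise the splitting $\int(f(y)-\lambda c(x,y))\,d\pi=\int f(y)\,d\pi-\lambda\int c\,d\pi$ is illegitimate and the link to $V(\delta')$ breaks. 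The raw push-forward $(\mathrm{id},T_\epsilon)_{\#}\mu$ from Jankov--von Neumann carries no such control. The paper's selection lemma handles precisely this point by building couplings $\pi_n$ lying in the set $E$ of measures with $\int c\,d\pi$ and $\int|f(y)|\,d\pi$ both finite; you would need an analogous truncation step.
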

\vspace{-5pt}
\noindent As the measurability of the function $\phi_{_{\lambda}}$ is
not immediate, we establish that $\phi_{_{\lambda}} \in \Mu$ in
Section \ref{Sec-Duality-Proof}, where the proof of Theorem
\ref{THM-STRONG-DUALITY} is also presented. Sufficient conditions for
the existence of a primal optimizer $\pi^\ast \in \Phi_{\mu,\delta},$
satisfying $I(\pi^\ast) = I,$ are presented in Section
\ref{Sec-Primal-Opt-Existence}.  For now, we are content discussing
the insights that can be obtained from Theorem
\ref{THM-STRONG-DUALITY}.  \bigskip
\begin{remark}[on the value of the dual problem]
  \textnormal{First, we point out the following useful characterization of the
  optimal value, $I,$ as a consequence of Theorem
  \ref{THM-STRONG-DUALITY}:
\begin{equation}
  I=\inf_{\lambda\geq0}\left\{ \lambda\delta+E_{\mu}\left[ \sup_{y\in
        S}\big\{f(y)-\lambda c(X,y)\big\}\right] \right\},
  \label{EQ-Duality-Implication}
  \end{equation}
  where the right hand side\footnote{As $\phi_{\lambda}(x)\geq f(x),$
    there is no ambiguity, such as the form $\infty-\infty,$ in the
    definition of integral $\int\phi_{\lambda}(x)d\mu(x)$} is simply a
  univariate reformulation of the dual problem.  This characterization
  of $I$ follows from the strong duality in Theorem
  \ref{THM-STRONG-DUALITY} and the observation that
  $J(\lambda, \phi_{_\lambda}) \leq J(\lambda, \phi),$ for every
  $(\lambda, \phi) \in \Lambda_{c,f}.$ The significance of this
  observation lies in the fact that the only probability measure
  involved in the right-hand side of \eqref{EQ-Duality-Implication} is
  the baseline measure $\mu,$ which is completely characterized, and
  is usually chosen in a way that it is easy to work with (or) draw
  samples from. In effect, the result indicates that the infinite
  dimensional optimization problem in \eqref{EQ-Primal-Problem} is
  easily solved by working on the univariate reformulation in the
  right hand side of
  \eqref{EQ-Duality-Implication}.} \label{Rem-Duality-Usefulness}
\end{remark}
\smallskip
\begin{remark}[on the structure of primal optimal transport
  plan\footnote{In the literature of optimal transportation 
    of probability measures, it is common to refer joint probability
    measures alternatively as transport plans, and the integral
    $\int cd\pi$ as the cost of transport plan $\pi.$ We follow this
    convention to allow easy interpretations.}] 
  \textnormal{It is evident from the characterisation of an optimal measure
  $\pi^{\ast}$ in Theorem \ref{THM-STRONG-DUALITY}(b) that {if
    $\pi^\ast$ exists,} it is concentrated on
  $ \left\{ (x,y)\in S\times S: y \in \arg \max_{z\in
      S}\{f(z)-\lambda^{\ast}c(x,z)\}\right\} .  $ Thus, a worst-case
  joint probability measure $\pi^{\ast}$ gets identified with a
  transport plan that transports mass from $x$ to the optimizer(s) of
  the local optimization problem
  $\sup_{z\in S}\big\{f(z)-\lambda^{\ast}c(x,z)\big\}.$ According to
  the complementary slackness conditions \eqref{Comp-Slack-1} and
  \eqref{Comp-Slack-2}, such a transport plan $\pi^\ast$ satisfies one
  of the following two cases:\\
  \textsc{Case 1: $\lambda^\ast > 0:$} the transport plan $\pi^\ast$
  necessarily costs $\int cd\pi^\ast = \delta,$\\
  \textsc{Case 2: $\lambda^\ast = 0:$} {the transport plan $\pi^\ast$
    satisfies $\int cd\pi^\ast \leq \delta$ (follows from primal
    feasibility of $\pi^\ast$) and $f(y)$ equals the constant
    $\sup_{z \in S} f(z),$ $\pi^\ast$ almost surely (follows from
    \eqref{Comp-Slack-1} assuming that $\arg \max_{z \in S} f(z)$
    exists). Recall from \eqref{EQ-Duality-Implication} that
    $I = \lambda^\ast \delta + \sup_{z \in S}\{f(z) - \lambda^\ast
    c(x,z)\} = \sup_{z \in S} f(z),$ which is in agreement with the
    structure described for the primal optimizer here when
    $\lambda^\ast = 0.$ The interpretation is that the budget for
    quantifying ambiguity, $\delta,$ is sufficiently large to move all
    the probability mass to $\arg \max_{z \in S} f(z),$ thus making
    $I$ as large as possible.}}
\label{Rem-Opt-Measure-Structure}
\end{remark}
\smallskip
% \begin{remark}[on sufficient conditions for the existence of primal
%   optimizer]
%   \hspace{-12pt} In addition to the Assumptions (A1) and (A2), if the
%   underlying Polish space $S$ is compact the primal optimizer always
%   exists (see Proposition \ref{PROP-DUALITY-COMPACT-2} in Section
%   \ref{Sec-Duality-Compact}). \textcolor{red}{More generally, for any
%     Polish space $S,$ if the functions $f(\cdot)$ and $c(\cdot,\cdot)$
%     satisfying Assumptions (A1) and (A2) are such that
%     $\arg \max \left\{ f(y) - \lambda^\ast c(x,y) \right\} \neq
%     \emptyset,$ $\mu$ almost surely, then
%     % \[\mu\left(\big\{x \in S: \arg \max \left\{ f(y) - \lambda^\ast
%     %       c(x,y) \right\} \neq \emptyset \}\big\} \right) = 1,\] then
%     the primal optimizer exists (see Corollary --- in Section ---)}.
%     If the space $S,$ equipped with norm
%   $\Vert \cdot \Vert$ is locally compact, then the growth condition
%   \vspace{-10pt}
%   \begin{assumption}[A3]
%     {$\varlimsup_{\Vert y \Vert \rightarrow \infty} f(y)/c(x,y) = 0
%       \quad$ for $\mu$ a.s. every $x \in S,$}
%   \end{assumption}
%   in addition to assumptions (A1) and (A2), are sufficient to
%   guarantee that
%   $\text{arg max}_{y \in S}\{f(y) - \lambda c(x,y)\} \neq \emptyset$
%   for $\mu$ a.s. every $x \in S, \lambda > 0$ and subsequently, a
%   primal optimal transport plan $\pi^\ast$ satisfying
%   \eqref{Comp-Slack-1} and \eqref{Comp-Slack-2} exists whenever
%   $\lambda^\ast >0.$
% \label{Rem-Suff-Cond-Primal-Opt}
% \end{remark}
% \bigskip

\begin{remark}[On the uniqueness of a primal optimal transport plan] 
  \textnormal{Suppose that there exists a primal optimizer
  $\pi^\ast \in \Phi_{\mu,\delta}$ and a dual optimizer
  $(\lambda^\ast, \phi_{_{\lambda^\ast}}) \in \Lambda_{c,f}$
  satisfying
  $I(\pi^\ast) = J(\lambda^\ast, \phi_{_{\lambda^\ast}}) < \infty.$ In
  addition, suppose that for $\mu-$almost every $x \in S,$ there is a
  unique $y \in S$ that attains the supremum in
  $\sup_{y \in S}\{ f(y) - \lambda^\ast c(x,y)\}.$ Then the primal
  optimizer $\pi^\ast$ is unique because of the following reasoning:
  For every $x \in S,$ if we let $T(x)$ denote the unique maximizer,
  $\arg \max\{f(y) - \lambda^\ast c(x,y)\},$ then it follows from
  Proposition 7.50(b) of \cite{bertsekas1978stochastic} that the map
  $T: (S, \mathcal{U}(S)) \rightarrow (S, \mathcal{B}(S))$ is
  measurable. If $(X,Y)$ is a pair jointly distributed according to
  $\pi^\ast$, then due to complementarity slackness condition
  \eqref{Comp-Slack-1}, we must have that $Y = T(X),$ almost
  surely. As any primal optimizer must necessarily assign entire
  probability mass to the set $\{(x,y): y = T(x)\}$ (due to
  complementarity slackness condition \eqref{Comp-Slack-1}), the
  primal optimizer is unique.}
  \label{Rem-Uniqueness-Primal-Opt}
\end{remark}

\begin{remark}[on $\epsilon-$optimal transport plans]
  \textnormal{Given $\epsilon > 0$ and a dual optimal pair
    $(\lambda^\ast,
    \phi_{_{\lambda^\ast}}),$ % no primal optimal transport plan $\pi^{\ast}$ exists (as
  % demonstrated in Example --- in Section ---) and
%   satisfying $\lambda^\ast > 0$ and
%   $\phi_{_{\lambda^\ast}}(x) := \sup_{z \in S}\{ f(y) - \lambda^\ast
%   c(x,z) \}< \infty$ for all $x \in S,$
% % \[\phi_{\lambda^\ast}(x) = \sup_{y \in S}\{ f(y) - \lambda^\ast c(x,y)
% %   \}< \infty, \quad\quad \mu \text{a.s.},\] 
%   any transport plan $\pi_\epsilon$ supported on
% \begin{align*}
%   \left\{  (x,y) \in S \times S : f(y) - \lambda^{\ast}c(x,y) > \sup_{z \in
%   S}\big\{ f(z) - \lambda^{\ast}c(x,z) \big\} - \frac{\epsilon}{2} \right\} ,
% \end{align*}
% such that
% $\int cd\pi_{\epsilon}\geq\delta-{\epsilon}/{2\lambda ^{\ast}}$ will
% be an $\epsilon-$optimal primal solution satisfying
% $I(\pi_{\epsilon}) \geq I - \epsilon$. In general,
any
$\pi_\epsilon \in \Phi_{\mu,\delta}$ is $\epsilon-$primal optimal if
and only if
\begin{align}
  \int \left( \phi_{\lambda^\ast}(x) -\big( f(y) - \lambda^\ast c(x,y)
  \big)\right) d\pi_{\epsilon}(x,y) \quad +  \quad \lambda^\ast \left( \delta -
  \int cd\pi_\epsilon\right) \ \leq \ \epsilon,
\label{Eps-Opt-Char-1}
\end{align}
where both the summands in the above expression are necessarily
nonnegative.  This follows trivially by substituting the following
expressions for $I$ and $I(\pi_\epsilon)$ in
$I - I(\pi_{\epsilon}) \leq \epsilon.$
\begin{align*}
  I = J(\lambda^\ast, \phi_{_\lambda^\ast}) &= \lambda^\ast\delta + \int
  \phi_{_{\lambda^\ast}}(x)d\pi_{\epsilon}(x,y)  \text{ and }\\
  I(\pi_\epsilon)
  = \int f(y)d\pi_{\epsilon}(x,y) &= \int  \big(f(y) -\lambda^\ast
  c(x,y) \big) d\pi_{\epsilon}(x,y) + \lambda^\ast  \int 
  c(x,y)d\pi_{\epsilon}(x,y).
\end{align*}
As both the summands in \eqref{Eps-Opt-Char-1} are nonnegative, for
any $\pi_\epsilon \in \Phi_{\mu,\delta}$ such that
$I(\pi_\epsilon) = \pi_\epsilon(S \times A) \geq I - \epsilon,$ we have,
  \begin{align}
    \int \big( \phi_{_{\lambda^\ast}}(x) - f(y) -
    \lambda^\ast c(x,y) \big)d\pi_\epsilon(x,y) \ \leq\  \epsilon \quad \text{ and }
    \quad  \left(\delta - \frac{\epsilon}{\lambda^\ast }\right)^+ \leq 
    \int cd\pi_\epsilon  \leq \delta, 
    \label{Inter-Eps-Opt}
  \end{align} 
where $a^+ := \max\{a, 0\}$ for any $a \in \mathbb{R}.$} 
\label{Rem-Eps-Opt-Measure-Structure}
\end{remark}

% \begin{example}
%   \textnormal{An example where the optimizers exist} 
% \end{example}

% \begin{example}
%   \textnormal{An example where the primal optimizer do not exist}  
% \label{Eg-Opt-Does-Not-Exist}
% \end{example}

% \begin{remark} 
%   \textnormal{As demonstrated in Example \ref{Eg-Opt-Does-Not-Exist},
%     if { $\phi_{\lambda^\ast}(x) < \infty,$ $\mu$
%       almost surely, and} a primal optimal transport plan $\pi^{\ast}$
%     does not exist, one can consider a family of transport plans
%     $(\pi_{\epsilon}: \epsilon> 0)$ such that
%     $\sup_{\epsilon> 0} I(\pi_{\epsilon}) = I.$ In particular, for any
%     $\epsilon> 0,$ one can consider an $\epsilon$-optimal solution
%     that is supported, for example, on the set
% \begin{align*}
% \left\{  (x,y) \in S \times S : f(y) - \lambda^{\ast}c(x,y) > \sup_{z \in
% S}\big\{ f(z) - \lambda^{\ast}c(x,z) \big\} - \frac{\epsilon}{2} \right\} ,
% \end{align*}
% and satisfying $\int cd\pi_{\epsilon}\geq\delta-{\epsilon}/{2\lambda
% ^{\ast}}. $ Such an $\epsilon$-optimal transport plan can be shown to satisfy
% $I(\pi_{\epsilon}) \geq I - \epsilon.$ This is indeed the content of Corollary
% \ref{Cor-eps-Optimal} in Section \ref{Sec-Duality-Proof}.}
% \label{Rem-Eps-Opt-Measure-Structure}
% \end{remark}

\begin{remark}
  \textnormal{{If $\sup_{y \in S} f(y)/(1+c(x,y)) = \infty$ (for
      example, if $f(y)$ grows to $\infty$ at a rate faster than the
      rate at which the transport cost function $c(x,y)$ grows) for
      every $x$ in a set $A \subseteq S$ such that $\mu(A) > 0,$ then
      $I = J = \infty.$ This is because, for every $\lambda \geq 0$
      and $x \in A,$
      $\phi_\lambda(x) = \sup_{y \in S}\{ f(y) -\lambda c(x,y)\} =
      \infty,$ and consequently,
      $\lambda \delta + \int \phi_{\lambda}d\mu = \infty$ for very
      $\lambda \geq 0;$ therefore $I = J = \infty$ as a consequence of
      Theorem \ref{THM-STRONG-DUALITY}. Requiring the objective $f$ to
      not grow faster than the transport cost $c$ may be useful from a
      modeling viewpoint as it offers guidance in understanding
      choices of transport cost functions that necessarily yield
      $\infty$ as the robust estimate
      $\sup\{\int fd\nu: d_c(\mu,\nu) \leq \delta\}$.}}
  \label{Rem-Growth-Condition}
\end{remark}
\smallskip

We next discuss an important special case of \textnormal{Theorem
  \ref{THM-STRONG-DUALITY}, namely, computing worst case
  probabilities.} The applications of this special case, in the
context of ruin probabilities, is presented in Section
\ref{Sec-App-LC} and Section \ref{Sec-App-FPP}. Other applications of
Theorem \ref{THM-STRONG-DUALITY}, broadly in the context of
distributionally robust optimization, are available in
\citet{esfahani2015data, Zhao_Guan}, \citet{Gao_Kleyweget} and
\citet{blanchet2016robust}, and as well in Example
\ref{Eg-Choosing-Reinsurance-Prop} in Section \ref{Sec-App-St-Opt} of
this paper.  %\vspace{-10pt}
\subsection{Application of Theorem \ref{THM-STRONG-DUALITY} for
  computing worst-case probabilities.}
 \label{Eg-Rob-Probabilities}
% Recall that $S$ is a Polish space and
 Suppose that we are interested in computing,
\begin{equation}
  I=\mathnormal{\sup\{P(A):d_{c}(\mu, P)\leq\delta\},} \label{Eq_Worst_Prob}
\end{equation}
where $A$ is a nonempty closed subset of the Polish space $S.$ Since
$A$ is closed, the indicator function $f(x) =\mathbf{1}_A(x) $ is
upper semicontinuous and therefore we can apply Theorem
{\ref{THM-STRONG-DUALITY} to address (\ref{Eq_Worst_Prob}). In order
  to apply Theorem {\ref{THM-STRONG-DUALITY}, we first observe that
\[
\sup_{y\in S}\left\{  \mathbf{1}_{A}(y)-\lambda c(x,y)\right\}
=\big(1-\lambda c(x,A)\big)^{+},
\]
where $c(x,A):=\inf\{c(x,y):y\in A\}$ is the lowest cost possible in
transporting unit mass from $x$ to some $y$ in the set $A,$ and
$a^{+}$ denotes the positive part of the real number $a.$
Consequently, Theorem \ref{THM-STRONG-DUALITY} guarantees that the
quantity of interest, $\mathnormal{I}$, in (\ref{Eq_Worst_Prob}), can
be computed by simply solving, 
\begin{align}
I=\inf_{\lambda\geq0}\left\{  \lambda\delta+E_{\mu}\left[  \big(1-\lambda
c(X,A)\big)^{+}\right]  \right\}.
\label{Probability-1d-Problem}
\end{align}
If the infimum in the above expression for $I$ is attained
at $\lambda^\ast = 0,$ then merely by substituting $\lambda = 0$ in
$\lambda\delta+E_{\mu}[ (1-\lambda c(X,A))^{+}]$, we obtain $I = 1.$
In addition, due to complementary slackness conditions
\eqref{Comp-Slack-1} and \eqref{Comp-Slack-2}, we have
$\pi^\ast(\mathbf{1}_A(y) = 1) = 1,$ and
$\int cd\pi^\ast \leq \delta,$ for any optimal transport plan
$\pi^\ast \in \Phi_{\mu,\delta}$ satisfying $I = I(\pi^\ast).$ Here,
as $f(x) = \mathbf{1}_A(x),$ we have 
\begin{equation*}
%  \label{Primal-Opt}
  I(\pi) = \int \mathbf{1}_A(y)d\pi(x,y) = \pi(S
  \times A), \quad\quad \pi \in \Phi_{\mu,\delta}. 
\end{equation*} 
Next, as we turn our attention towards the structure of an optimal
transport plan %$\pi^\ast \in \Phi_{\mu,\delta}$ satisfying
% $\pi^\ast(S \times A) = I$
when $\lambda^\ast > 0,$ let us assume, for
ease of discussion, that
$Proj_A(x) := \{ y \in A: c(x,y) = c(x,A)\} \neq \emptyset$ for every
$x \in S.$
% % In particular, as $c(x,A) \leq c(x,y)$ for any $y \in A,$ we
% % obtain that 
% % \begin{align*}
% %   \delta \geq \int c(x,y)d\pi^\ast
% % \end{align*}
% % \begin{align*}
% %   \mathbf{1}_A(y) = 1 \quad \pi^\ast-\text{a.s.} \quad\quad \text{ and
% %   } \quad\quad \int c(x,A)d\mu(x) \leq \int c(x,y) d\pi^\ast(x,y) \leq
% %   \delta.  
% % \end{align*}
% Next, as we turn our attention to the structure of an optimal
% transport plan % $\pi^\ast \in \Phi_{\mu,\delta}$ satisfying
% % $I = I(\pi^\ast)$
% when $\lambda^\ast > 0,$ let us define
% $\mathnormal{Proj}\xspace_{A}(x):=\{y\in A:c(x,y)=c(x,A)\},$ for
% $x \in S.$ 
Unless indicated otherwise, let us assume that $\lambda^\ast > 0$ in
the rest of this discussion. For every $x \in S$ and
$\lambda^\ast > 0,$ observe that
\begin{align*}
 \arg\max_{y \in S} \big\{ \mathbf{1}_A(y) - \lambda^\ast c(x,y)
  \big\}    = 
  \begin{cases}
    Proj_A(x) &\text{ if } 0 \leq c(x,A) <
    \frac{1}{\lambda^\ast},\\
    Proj_A(x) \cup \{x\}  &\text{ if }  c(x,A) =
    \frac{1}{\lambda^\ast},\\ 
   \{ x \} & \text{ otherwise.}
  \end{cases}
\end{align*}
Then, Part (b) of Theorem \ref{THM-STRONG-DUALITY} allows us to
conclude that, an optimal transport plan
$\pi^\ast \in \Phi_{\mu,\delta}$ satisfying
$\pi^\ast(S \times A) = I,$ if it exists, is concentrated on
\begin{align*}
  S^\ast % &:= \left\{ (x,y) \in S \times S: \mathbf{1}_A(y) - \lambda^\ast c(x,y) =
  % \big( 1 - \lambda^\ast c(x,A))^+ \right\}\\
  &= \left\{ (x,y):c(x,A)\leq
  \frac{1}{\lambda^{\ast}},\ y\in\mathnormal{Proj}\xspace_{A}(x)\right\}
\cup\left\{ (x,x):c(x,A) \geq \frac{1}{\lambda^{\ast}}\right\}. 
\end{align*}
{Next, let $\Pi_{S^\ast}(\mu)$ denote the set of probability measures
  $\pi$ with $\mu$ as marginal for the first component, and satisfying
  $\pi(S^\ast) = 1.$ In other words,
\begin{align*}
  \Pi_{S^\ast}(\mu) := \big\{ \pi \in P(S \times S): \ \pi(A \times S) =
  \mu(A) \text{ for all } A \in \mathcal{B}(S), \ \pi(S^\ast) = 1
  \big\}.  
\end{align*}
% where $\spt(\pi)$ is used to denote the support of the probability
% measure $\pi.$
% Here, it may be useful to remember that $Proj_A(x)$
% could be empty for some $x \in S,$ and if it happens that
% \[\mu\left(x: c(x,A) \leq \frac{1}{\lambda^\ast}, Proj_A(x) =
%     \emptyset\right) > 0,\] then 
% the collection $\Pi_{S^\ast}(\mu)$ is empty as well, and a primal
% optimizer $\pi^\ast \in \Phi_{\mu,\delta}$ satisfying
% $\pi^\ast(S \times A) = I$ does not exist.
Recall that $c(x,x) = 0$ for any $x \in S,$ and $c(x,y) = c(x,A)$ for
any $y \in Proj_A(x).$ Therefore, for a pair $(X,Y)$ distributed
jointly according to some $\pi \in \Pi_{S^\ast}(\mu),$ it follows from
the definition of the collection $\Pi_{S^\ast}(\mu)$ that
\begin{align*}
  E_\pi\left[ c(X,Y)\ | \  X \right] =
  \begin{cases}
    E_\pi\left[ c(X,A)\ | \  X \right] = c(X,A) \quad\quad&\text{ if } c(X,A) < 1/\lambda^\ast,\\
    E_\pi\left[ c(X,X)\ | \  X \right]  = 0 &\text{ if } c(X,A) > 1/\lambda^\ast,\\
    E_{\pi} \big[ I\left(Y \in Proj_A(X)\right)| \ X
    \big]c(X,A) &\text{ if } c(X,A) = 1/\lambda^\ast,
  \end{cases}
\end{align*}
almost surely. Then, as $c(\cdot,\cdot)$ is non-negative,
$E_\pi\big[c(X,Y)\big] = E_\pi\left[ E_\pi\left[ c(X,Y)\ | \ X
  \right]\right]$ satisfies,
\[ E_{\pi}\left[c(X,A); c(X,A) < \frac{1}{\lambda^\ast} \right] \leq
  E_\pi\big[c(X,Y)\big] \leq E_{\pi}\left[c(X,A); c(X,A) <
    \frac{1}{\lambda^\ast} \right] + E_{\pi} \left[c(X,A); c(X,A) =
    \frac{1}{\lambda^\ast} \right].\] Since the marginal distribution
of $X$ is $\mu$ (refer the definiton of $\Pi_{S^\ast}(\mu)$ above), it
follows that $\int cd\pi = E_\pi[c(X,Y)]$ satisfies
$\underline{c} \leq \int cd\pi \leq \overline{c},$ where
\begin{align}
  \underline{c} := \int_{\{ x: c(x,A) < \frac{1}{\lambda^\ast}\}}
  \hspace{-10pt}c(x,A) d\mu(x) 
  \quad \text{ and } \quad 
  \overline{c} := \int_{\{ x: c(x,A) \leq \frac{1}{\lambda^\ast} \}}
  \hspace{-10pt}c(x,A) d\mu(x).
\label{Defn-Cbars}
\end{align}
Further, as any optimal measure $\pi^\ast \in \Phi_{\mu,\delta}$
satisfying $\pi^\ast(S \times A) = I$ is a member of
$\Pi_{S^\ast}(\mu),$ it follows from the complementary slackness
condition \eqref{Comp-Slack-2} that $\int cd\pi^\ast$ has to equal
$\delta$ whenever $\lambda^\ast > 0,$ and consequently,
%\begin{align*}
  $\underline{c} \leq \delta \leq \overline{c},$
%\end{align*}}
  whenever a primal optimizer $\pi^\ast \in \Phi_{\mu,\delta}$
  satisfying $\pi^\ast(S \times A) = I$ exists. The observation that
  $\underline{c} \leq \delta \leq \overline{c}$ holds regardless of
  whether a primal optimizer exists or not, and this is the content of
  Lemma \ref{Lem-Optimality-lambda-Prob} below,  whose proof is
  provided towards the end of this section in Subsection
  \ref{Sec-Proof-Probab-Case}. %Theorem \ref{Thm-Prob-Reform}.  
  \begin{lemma}
    Suppose that Assumption (A1) is in force, and $A$ is a nonempty
    closed subset of the Polish space $S.$ In addition, suppose that
    $\lambda^\ast \in (0,\infty)$ attains the infimum in
    \eqref{Probability-1d-Problem}. Then,
    $\underline{c} \leq \delta \leq \overline{c}.$ On the other hand,
    if the infimum in \eqref{Probability-1d-Problem} is attained at
    $\lambda^\ast = 0,$ then
    $\delta \geq \overline{c} = \underline{c}.$
    \label{Lem-Optimality-lambda-Prob}
  \end{lemma}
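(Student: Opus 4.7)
The plan is to exploit convexity of the one-dimensional dual objective in (\ref{Probability-1d-Problem}) and invoke the standard first-order optimality condition for convex minimisation on $[0,\infty).$ Set $g(\lambda) := \lambda\delta + E_\mu[(1-\lambda c(X,A))^+].$ For each fixed $x \in S,$ the map $\lambda \mapsto (1-\lambda c(x,A))^+$ is convex in $\lambda$ (composition of $t \mapsto t^+$ with an affine map), so $g$ is convex on $[0,\infty).$ A point $\lambda^\ast > 0$ minimises $g$ over $[0,\infty)$ if and only if $g'_-(\lambda^\ast) \leq 0 \leq g'_+(\lambda^\ast),$ while $\lambda^\ast = 0$ is a minimiser if and only if $g'_+(0) \geq 0.$

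The next step is to compute these one-sided derivatives by differentiating inside the expectation. Inspecting $\lambda \mapsto (1-\lambda c(x,A))^+$ pointwise in $x,$ its right- and left-derivatives at $\lambda^\ast > 0$ are $-c(x,A)\mathbf{1}_{\{c(x,A)<1/\lambda^\ast\}}$ and $-c(x,A)\mathbf{1}_{\{c(x,A)\leq 1/\lambda^\ast\}}$ respectively (the boundary $c(x,A) = 1/\lambda^\ast$ contributes only to the left-derivative). To interchange the one-sided derivative with the integral, I would restrict $\lambda$ to a compact neighbourhood $[\lambda^\ast/2,\, 2\lambda^\ast];$ there the difference quotient $((1-\lambda c(x,A))^+ - (1-\lambda^\ast c(x,A))^+)/(\lambda-\lambda^\ast)$ is dominated in absolute value by $c(x,A)\mathbf{1}_{\{c(x,A) \leq 2/\lambda^\ast\}} \leq 2/\lambda^\ast,$ a $\mu$-integrable bound independent of $\lambda.$ Dominated convergence then yields $g'_+(\lambda^\ast) = \delta - \underline{c}$ and $g'_-(\lambda^\ast) = \delta - \overline{c}$ in the notation of (\ref{Defn-Cbars}), and the first-order condition $g'_-(\lambda^\ast) \leq 0 \leq g'_+(\lambda^\ast)$ delivers exactly $\underline{c} \leq \delta \leq \overline{c}.$

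For the boundary case $\lambda^\ast = 0,$ I would use the pointwise identity $1 - (1-hc(x,A))^+ = \min(hc(x,A), 1)$ and monotone convergence (as $h \downarrow 0$) to obtain $g'_+(0) = \delta - E_\mu[c(X,A)] \in [-\infty,\delta].$ Optimality at $\lambda^\ast = 0$ forces $g'_+(0) \geq 0,$ hence $E_\mu[c(X,A)] < \infty$ and $\delta \geq E_\mu[c(X,A)].$ Since $c$ is real-valued, $\{c(X,A) < 1/\lambda^\ast\} = \{c(X,A) \leq 1/\lambda^\ast\} = S$ when $\lambda^\ast = 0,$ so $\underline{c} = \overline{c} = E_\mu[c(X,A)],$ and the conclusion $\delta \geq \overline{c} = \underline{c}$ follows.

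The main technical obstacle is the interchange of derivative and integral at $\lambda^\ast > 0.$ The positivity of $\lambda^\ast$ is essential because it truncates the range of $c(X,A)$ on which the difference quotients are nontrivial, producing the bounded integrable dominator needed for dominated convergence; without it one only has the monotone-convergence argument used at $\lambda^\ast = 0.$ Once the formulas for $g'_{\pm}(\lambda^\ast)$ are in hand, the lemma is a direct restatement of first-order optimality for convex functions on a half-line.
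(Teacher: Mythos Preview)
Your argument is correct and takes a genuinely different route from the paper. The paper proves Lemma~\ref{Lem-Optimality-lambda-Prob} from the \emph{primal} side: it invokes an auxiliary Lemma~\ref{Lem-Inter-Probs} to construct a sequence of $\epsilon$-optimal transport plans $\pi_n \in \Phi_{\mu,\delta}$ concentrated on carefully chosen sets $C_n$, and then sandwiches $\int c\,d\pi_n$ between integrals that converge to $\underline{c}$ and $\overline{c}$; the complementary-slackness bound $(\delta - 2/(n\lambda^\ast))^+ \leq \int c\,d\pi_n \leq \delta$ from Remark~\ref{Rem-Eps-Opt-Measure-Structure} then pins $\delta$ in $[\underline{c},\overline{c}]$ in the limit. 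This relies on strong duality (Theorem~\ref{THM-STRONG-DUALITY}) through the characterisation of $\epsilon$-optimal plans. Your approach, by contrast, stays entirely on the dual side: you recognise $g(\lambda)=\lambda\delta+E_\mu[(1-\lambda c(X,A))^+]$ as convex, compute its one-sided derivatives by dominated/monotone convergence, and read off $\underline{c}\le\delta\le\overline{c}$ directly from the first-order optimality condition $g'_-(\lambda^\ast)\le 0\le g'_+(\lambda^\ast)$. This is shorter, self-contained, and does not invoke strong duality or any primal construction. The paper's longer route has the compensating virtue that the same $\epsilon$-optimal plans from Lemma~\ref{Lem-Inter-Probs} are reused verbatim to prove Theorem~\ref{Thm-Prob-Reform}, so its machinery serves double duty; your argument settles the lemma cleanly but does not by itself yield the reformulation $I=\mu\{c(x,A)\le 1/\lambda^\ast\}$.
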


  If $\lambda^\ast > 0$ and $\underline{c} = \overline{c} = \delta,$
  any coupling in $\Pi_{S^\ast}(\mu)$ is primal optimal (because it
  satisfies the complementary slackness conditions
  \eqref{Comp-Slack-1} and \eqref{Comp-Slack-2} in addition to the
  primal feasibility condition that $\pi^\ast \in
  \Phi_{\mu,\delta}$). In particular, one can describe a convenient
  optimal coupling $\pi^\ast \in \Pi_{S^\ast}(\mu)$ satisfying
  $I(\pi^\ast) := \pi^\ast(S \times A) = I$ as follows: First, sample
  $X$ with distribution $\mu,$ and let $\xi : S \rightarrow A$ be any
  universally measurable map such that $\xi (x) \in Proj_A(x),$
  $\mu$-almost surely. We then write,
\begin{align*}
  Y^\ast = \xi (X) \cdot I\left( c(X, A) \leq
  \frac{1}{\lambda^\ast}\right) + X \cdot I \left( c(X,A) >
  \frac{1}{\lambda^\ast}\right), 
\end{align*} 
to obtain $(X,Y^\ast)$ distributed according to $\pi^\ast.$ Such a
universally measurable map $\xi(\cdot) $ always exists, assuming that
$c(\cdot,\cdot) $ is lower semicontinuous and that
\textnormal{$\mathnormal{Proj}\xspace_{A}(x)$ is not
  empty\footnote{The assumption that
    $\mathnormal{Proj}\xspace_{A}(x) \neq \emptyset$ holds, for
    example, when $A$ is compact and nonempty (as $c$ is lower
    semi-continuous), or when $c(x,\cdot)$ has compact sub-level sets
    for each $x$ (recall that $A$ is a closed set in the discussion)}
  for $\mu$-almost every }$x$ (see, for example, Proposition 7.50(b)
of \citet{bertsekas1978stochastic}). In this case,
\begin{align*}
  I %:= \sup\left\{  P(A):\ d_{c}(\mu, P)\leq\delta\right\} 
  = \pi^\ast(S \times A) = \pi^\ast \left\{(x,y) \in S \times S:
  c(x,A) \leq \frac{1}{\lambda^\ast}\right\}
  =\mu\left\{x \in S:c(x,A)\leq\frac{1}{\lambda^{\ast}}\right\}.  
%\label{Set-Modified-Support}
\end{align*}
The second equality follows from the observation that for the
described coupling $(X,Y^\ast)$ distributed according to $\pi^\ast,$
we have, $Y^\ast \in A$ if and only if $c(X,A) \leq 1/\lambda^\ast,$
almost surely.

The reformulation that $I = \mu(x: c(x,A) \leq 1/\lambda^\ast)$ is
extremely useful, as it re-expresses the worst-case probability of
interest in terms of the probability of a suitably inflated
neighborhood, $\{x \in S: c(x,A) \leq 1/\lambda^\ast\},$ evaluated
under the reference measure $\mu,$ which is often tractable (see
Section \ref{Sec-App-LC} and \ref{Sec-App-FPP} for some
applications). However, as the reasoning that led to this
reformulation relies on the existence of a primal optimal transport
plan, we use $\epsilon-$optimal transport plans in Theorem
\ref{Thm-Prob-Reform} below to arrive at the same conclusion. The
proof of Theorem \ref{Thm-Prob-Reform} is presented towards the end of
this section in Subsection \ref{Sec-Proof-Probab-Case}.

% The applications of
% this reformulation are demonstrated in Section \ref{Sec-App-LC} and
% Section \ref{Sec-App-FPP}. However, 

% However, as the reasoning that led to Lemma
% \ref{Lem-Optimality-lambda-Prob} and Theorem \ref{Thm-Prob-Reform}
% assumed the existence of a primal optimizer, we argue in 

% Once again, regardless of the existence of a primal optimal transport
% plan satisfying $\pi^\ast(S \times A) = I,$ the above reformulation,
% $I = \mu(x: c(x,A) \leq 1/\lambda^\ast),$ holds whenever
% $\underline{c} = \overline{c}$, and this is the content of Theorem
% \ref{Thm-Prob-Reform} below:
\begin{theorem}
  Suppose that Assumption (A1) is in force, and $A$ is a nonempty
  closed subset of the Polish space $S.$ In addition, suppose that
  $\lambda^\ast \in [0,\infty)$ attains the infimum in
  \eqref{Probability-1d-Problem}, and the quantities
  $\underline{c},\overline{c},$ defined in \eqref{Defn-Cbars} in terms
  of $\lambda^\ast$, are such that $\underline{c} = \overline{c}.$
  Then,
  \begin{equation}
    \sup\big\{  P(A):\ d_{c}(\mu, P)\leq\delta\big\}  =\mu\left\{
      x:c(x,A)\leq\frac{1}{\lambda^{\ast}}\right\}. 
  \label{Set-Modified-Support}
\end{equation}
\label{Thm-Prob-Reform}
\end{theorem}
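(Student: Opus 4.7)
The plan is to derive Theorem \ref{Thm-Prob-Reform} as a direct consequence of Theorem \ref{THM-STRONG-DUALITY} and Lemma \ref{Lem-Optimality-lambda-Prob}, bypassing any need to construct a primal optimal coupling explicitly. Since $f=\mathbf{1}_A$ is bounded and upper semicontinuous (as $A$ is closed), the one-dimensional dual reformulation in Remark \ref{Rem-Duality-Usefulness}, combined with the identity $\sup_{y\in S}\{\mathbf{1}_A(y)-\lambda c(x,y)\}=(1-\lambda c(x,A))^+$ already noted in this section, gives
\[
I=\inf_{\lambda\geq 0}\bigl\{\lambda\delta+E_\mu[(1-\lambda c(X,A))^+]\bigr\}.
\]
By hypothesis this infimum is attained at $\lambda^*\in[0,\infty)$, so the entire proof reduces to evaluating the right-hand side at $\lambda^*$ and matching it with $\mu\{c(X,A)\leq 1/\lambda^*\}$.

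First I would dispose of the trivial case $\lambda^*=0$: direct substitution gives $I=1$, and since $A$ is nonempty and $c$ is real-valued, $c(x,A)<\infty$ for every $x\in S$; hence, under the convention $1/0=\infty$, the right-hand side of \eqref{Set-Modified-Support} equals $\mu(S)=1$ as well.

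For the main case $\lambda^*>0$, I would invoke Lemma \ref{Lem-Optimality-lambda-Prob} to obtain $\underline{c}\leq\delta\leq\overline{c}$, which together with the hypothesis $\underline{c}=\overline{c}$ forces $\delta=\underline{c}=\overline{c}$. The key observation is the elementary computation
\[
\overline{c}-\underline{c}=\int_{\{c(x,A)=1/\lambda^*\}}c(x,A)\,d\mu(x)=\frac{1}{\lambda^*}\mu\{c(X,A)=1/\lambda^*\},
\]
so the hypothesis $\underline{c}=\overline{c}$ is equivalent to the assertion that the boundary level set $\{c(X,A)=1/\lambda^*\}$ is $\mu$-null. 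Splitting the expectation on $\{c(X,A)<1/\lambda^*\}$ and its complement then yields
\[
E_\mu\bigl[(1-\lambda^* c(X,A))^+\bigr]=\mu\{c(X,A)<1/\lambda^*\}-\lambda^*\underline{c},
\]
and substituting into $I=\lambda^*\delta+E_\mu[(1-\lambda^* c(X,A))^+]$, using $\delta=\underline{c}$, cancels the $\lambda^*$-proportional terms. Finally, the $\mu$-nullity of $\{c(X,A)=1/\lambda^*\}$ gives $\mu\{c(X,A)<1/\lambda^*\}=\mu\{c(X,A)\leq 1/\lambda^*\}$, which is precisely the claimed identity \eqref{Set-Modified-Support}.

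I do not anticipate a genuine obstacle: once Lemma \ref{Lem-Optimality-lambda-Prob} is in hand the argument is essentially a short bookkeeping exercise reading off the optimal value from the univariate dual. The one delicate point is the treatment of the boundary level set $\{c(X,A)=1/\lambda^*\}$, and the hypothesis $\underline{c}=\overline{c}$ is precisely tailored to render it $\mu$-null, so that the strict and non-strict inequality versions of the worst-case set agree.
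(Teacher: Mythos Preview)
Your proof is correct and takes a genuinely different route from the paper's. The paper's proof of Theorem~\ref{Thm-Prob-Reform} works on the primal side: it invokes Lemma~\ref{Lem-Inter-Probs} to produce a sequence $\{\pi_n\}\subseteq\Phi_{\mu,\delta}$ of near-optimal transport plans concentrated on carefully chosen sets $C_n$, decomposes $C_n$ into pieces $D_n^{(1)},D_n^{(2)},D_n^{(3)}$, and sandwiches $I(\pi_n)=\pi_n(S\times A)$ between quantities that are shown, via limiting arguments, to converge to $\mu\{c(X,A)\le 1/\lambda^\ast\}$. Your argument instead stays entirely on the dual side: once Lemma~\ref{Lem-Optimality-lambda-Prob} forces $\delta=\underline{c}=\overline{c}$ (and hence $\mu\{c(X,A)=1/\lambda^\ast\}=0$), you simply evaluate the one-dimensional dual objective $\lambda^\ast\delta+E_\mu[(1-\lambda^\ast c(X,A))^+]$ at the optimum and read off the answer by elementary arithmetic.

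What each approach buys: your route is considerably shorter and more transparent for the stated conclusion; the only substantive input beyond strong duality is Lemma~\ref{Lem-Optimality-lambda-Prob}, which you use as a black box. The paper's approach is heavier, but its primal-side machinery (the structure of the $\epsilon$-optimal $\pi_n$ on the sets $D_n^{(i)}$) is what also underlies the proof of Lemma~\ref{Lem-Optimality-lambda-Prob} itself and feeds into the discussion of optimal and near-optimal transport plans (Remarks~\ref{Rem-Opt-Measure-Structure}--\ref{Rem-Primal-Opt-Cbar-Not-equal}); so the paper is effectively reusing that apparatus rather than discovering a separate need for it. Interestingly, the paper motivates Theorem~\ref{Thm-Prob-Reform} by noting that the direct argument ``relies on the existence of a primal optimal transport plan''; your proof shows that this is not actually the case once Lemma~\ref{Lem-Optimality-lambda-Prob} is available.
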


Recall from Lemma \ref{Lem-Optimality-lambda-Prob} that
$\lambda^\ast,$ if it is strictly positive, is such that
$\delta \in (\underline{c}, \overline{c}).$ To arrive at the
characterization \eqref{Set-Modified-Support}, we had assumed that the
integral
\begin{align}
  h(u) := \int_{\{x:c(x,A) \leq u\}} c(x,A)d\mu(x) 
\label{h(u)-Integral}
\end{align}
is continuous at $u = 1/\lambda^\ast,$ and consequently
$\underline{c} = \overline{c}.$ However, if this is not the case, for
example, because $\mu$ is atomic, then
$\underline{c} < \delta \leq \overline{c}.$ In this scenario, one can
identify an optimal coupling by randomizing between the extreme cases
$\underline{c}$ and $\overline{c}.$ Remark
\ref{Rem-Primal-Opt-Cbar-Not-equal} below provides a characterization
of the primal optimizer when $\underline{c} < \delta \leq
\overline{c}.$ 

\begin{remark}
  Suppose that $\lambda^\ast$ is such that
  $\underline{c} < \delta \leq \overline{c},$ and that all the
  assumptions in Theorem \ref{Thm-Prob-Reform}, except the condition
  that $\underline{c} = \overline{c} = \delta,$ are satisfied.  Let
  $Z$ be an independent Bernoulli random variable with success
  probability,
%\begin{align*}
    $p := {(\delta - \underline{c})}/{(\overline{c} -
      \underline{c})}.$
%\end{align*}
    As before, sample $X$ with distribution $\mu$ and let
    $\xi : S \rightarrow A$ be any universally measurable map such
    that $\xi (x) \in Proj_A(x),$ $\mu$-almost surely. Then,
\begin{align*}
  Y^\ast &:= \xi (X) \cdot I\left( c(X, A) <
           \frac{1}{\lambda^\ast}\right) + X \cdot I \left( c(X,A) >
           \frac{1}{\lambda^\ast}\right) 
           + \left( Z\cdot\xi (X) + (1-Z) \cdot X
           \right) \cdot I\left( c(X, A) = 
           \frac{1}{\lambda^\ast}\right) 
\end{align*}
is such that $\Pr((X,Y^\ast) \in S^\ast) = 1$ (thus satisfying
complementary slackness condition \eqref{Comp-Slack-1}), and
\begin{align*}
  E\big[c(X,Y^\ast) \big] = \int_{\{c(x,A) < \frac{1}{\lambda^\ast}\}} \hspace{-20pt}
                            c(x,A) d\mu(x) \ +\  \Pr( Z
                            =1 ) \int_{\{ c(x,A) =
                            \frac{1}{\lambda^\ast}\}} \hspace{-20pt} c(x,A) d\mu(x) \
                            + \  0
                          =\underline{c} + p \big( \overline{c} -
                            \underline{c}\big) = \delta.  
\end{align*}
This verifies the complementary condition \eqref{Comp-Slack-2} as
well, and hence the marginal distribution of $Y^\ast$ in the coupling
$(X,Y^\ast)$ attains the supremum in
$\sup\{ P(A) : d_c(\mu, P) \leq \delta \}.$ % In this case,
% \begin{align*}
%  \sup\{P(A) : d_c(\mu,P) \leq \delta\} &= \Pr(Y^\ast \in A) 
%   = \Pr \left(c(X,A) < \frac{1}{\lambda^\ast}\right) + \Pr \left(Z = 1, c(X,A) =
%   \frac{1}{\lambda^\ast} \right)\\ 
%   &= \mu\left(x: c(x,A) < \frac{1}{\lambda^\ast}\right) + \frac{\delta
%     - \underline{c}}{\overline{c} - \underline{c}} \mu\left(x: c(x,A)
%     = \frac{1}{\lambda^\ast}\right). 
% \end{align*}
  \label{Rem-Primal-Opt-Cbar-Not-equal}
\end{remark}

All the examples considered in this paper have the function $h(u)$ as
continuous, and hence we have the characterization
\eqref{Set-Modified-Support} which offers an useful reformulation for
computing worst-case probabilities.  For instance, let us take the
cost function $c$ as a distance metric $d$ (as in Wasserstein
distances), and let the baseline measure $\mu$ and the set $A$ be such
that $h(\cdot),$ defined in \eqref{h(u)-Integral}, is continuous.
Then \eqref{Set-Modified-Support} provides an interpretation that the
worst-case probability $P(A)$ in the neighborhood of $\mu$ is just the
same as $\mu(A_{1/\lambda^{\ast}}),$ where
$A_{\epsilon}:= \big\{y\in S:d(x,y)\leq\epsilon\text{ for some }x\in A
\big\}$ denotes the $\epsilon$-neighborhood of the set $A.$ Thus, the
problem of finding a probability measure with worst-case probability
$P(A)$ in the neighborhood of measure $\mu$ amounts to simply
searching for a suitably inflated neighborhood of the set $A$
itself. For example, if $S=\mathbb{R},\ c(x,y)=|x-y|,$ then 
\[
\sup\big\{P[a,\infty):d_{c}(\mu,P)\leq\delta\big\}=\mu\left[  a-\frac
{1}{\lambda^{\ast}},\infty\right),
\]
for any $\mu\in\mathcal{B}(\mathbb{R}),$ where $\lambda^\ast$ is the
solution to the univariate optimization problem in
\eqref{Probability-1d-Problem}. Alternatively, due to Lemma
\ref{Lem-Optimality-lambda-Prob}, % if
% $\int c(x,A)d\mu(x) \leq \delta^\ast$ (in which case
% $\lambda^\ast > 0$ and
% $\underline{c} \leq \delta \leq \overline{c},$ due to Lemma
% \ref{Lem-Optimality-lambda-Prob}),
one can characterize $1/\lambda^\ast$ as
$h^{-1}(\delta) := \inf\{u \geq 0: h(u) \geq \delta\},$ where $h(u)$
is the monotonically increasing right-continuous function (with left
limits) defined in \eqref{h(u)-Integral}.  If $h(u)$ does not admit a
closed-form expression, one approach is to obtain samples of $X$ under
the reference measure $\mu,$ and either solve the sampled version of
\eqref{Probability-1d-Problem}, or compute a Monte Carlo approximation
of the integral $h(u)$ to identify the level $1/\lambda^\ast$ as
$h^{-1}(\delta) = \inf\{u: h(u) \geq \delta\}.$

\subsubsection{Proofs of Lemma \ref{Lem-Optimality-lambda-Prob} and
  Theorem \ref{Thm-Prob-Reform}.}
\label{Sec-Proof-Probab-Case}
We conclude this section with proofs for Lemma
\ref{Lem-Optimality-lambda-Prob} and Theorem
\ref{Thm-Prob-Reform}. % For any
  % $\pi_\epsilon \in \Phi_{\mu,\delta}$ such that
  % $I(\pi_\epsilon) = \pi_\epsilon(S \times A) > I - \epsilon$ for a
  % given $\epsilon > 0,$ the following observation from Remark 
  % \ref{Rem-Eps-Opt-Measure-Structure} will be repeatedly useful: 
  % \begin{align}
  %   \int \big( \phi_{_{\lambda^\ast}}(x) - \mathbf{1}_A(y) -
  %   \lambda^\ast c(x,y) \big)d\pi_\epsilon(x,y) \leq \epsilon \quad \text{ and }
  %   \quad  \left(\delta - \frac{\epsilon}{\lambda^\ast }\right)^+ \leq 
  %   \int cd\pi_\epsilon  \leq \delta. 
  %   \label{Inter-Eps-Opt}
  % \end{align} 
  % This is because,
  % $\phi_{\lambda^\ast}(x) := (1-\lambda^\ast c(x,A))^+ \geq
  % \mathbf{1}_A(y) - \lambda^\ast c(x,y)$ for every $x,y$ in $S,$ and
  % $\int cd\pi_\epsilon \in [0,\delta].$
  Given $\lambda^\ast \geq 0$
  and $n > 1,$ define $C_n:= C_n^{(1)} \cup C_n^{(2)},$ where
\begin{align*}
  C_n^{(1)} := \left\{ (x,y) \in S \times S: c(x,A) \leq 
              \frac{1}{\lambda^\ast}\left(1+\frac{1}{n}\right),  \ y \in A,
              \ c(x,y) < c(x,A) + \frac{1}{\lambda^\ast n}\right\} 
              \text{ and }\\ 
  C_n^{(2)} := \left\{ (x,y) \in S \times S: c(x,A) > 
              \frac{1}{\lambda^\ast}\left( 1- \frac{1}{n}\right),  \ y
              \notin A, \ c(x,y) < \frac{1}{\lambda^\ast n}\right\}. 
\end{align*}
In addition, define 
  \begin{align*}
    D_n^{(1)} := \left\{ (x,y) \in C_n: c(x,A) \leq
    (1-1/n)/\lambda^\ast\right\}, \quad 
    D_n^{(2)} := \left\{ (x,y) \in C_n: c(x,A) > 
    (1+1/n)/\lambda^\ast\right\},  
  \end{align*}
  and
  $D_n^{(3)} := C_n \setminus \left(D_n^{(1)} \cup D_n^{(2)}\right).$
  The above definitions yield, $C_n^{(1)} = D_n^{(1)} = S \times A,$
  and $C_n^{(2)} = D_n^{(2)} = D_n^{(3)} = \emptyset,$ when
  $\lambda^\ast = 0.$ 

\begin{lemma} 
  Suppose that Assumption (A1) is in force, and $A$ is a nonempty
  closed subset of the Polish space $S.$ In addition, suppose that
  $\lambda^\ast \in [0,\infty)$ attains the infimum in
  \eqref{Probability-1d-Problem}. Then, there exists a collection of
  probability measures
  $\{\pi_n : n > 1\} \subseteq \Phi_{\mu,\delta}$ such that
  $\pi_n(C_n) \geq 1 - 1/n,$
  $\int_{(S \times S) \setminus C_n} cd\pi_n = 0,$ and
  $I(\pi_n) := \pi_n(S \times A) \geq I - 2/n.$
\label{Lem-Inter-Probs}
\end{lemma}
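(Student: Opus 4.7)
The plan is to produce $\pi_n$ in two stages: first pick a feasible plan $\pi_n^\prime \in \Phi_{\mu,\delta}$ that is close enough to primal-optimal so that Remark \ref{Rem-Eps-Opt-Measure-Structure} forces most of its mass onto $C_n$; then modify $\pi_n^\prime$ by re-routing the small residual mass sitting outside $C_n$ through the diagonal $\{(x,x):x\in S\}$, which has zero transport cost. Since $f = \mathbf{1}_A$ is bounded by $1$, we have $I \leq 1 < \infty$, so by definition of supremum, for any $\epsilon_n > 0$ we can pick $\pi_n^\prime \in \Phi_{\mu,\delta}$ with $I(\pi_n^\prime) \geq I - \epsilon_n$. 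Taking $\epsilon_n := 1/(2n^2)$ will turn out to be sufficient.

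The heart of the argument is a uniform lower bound on the (non-negative) dual-feasibility integrand $\phi_{\lambda^\ast}(x) - \mathbf{1}_A(y) + \lambda^\ast c(x,y)$ over $C_n^c$. When $\lambda^\ast = 0$, we have $\phi_0 \equiv 1$ and $C_n = S \times A$, so the integrand equals $\mathbf{1}_{A^c}(y) \geq 1$ on $C_n^c$. When $\lambda^\ast > 0$, the verification proceeds by splitting according to whether $c(x,A) < (1-1/n)/\lambda^\ast$, $c(x,A) > (1+1/n)/\lambda^\ast$, or lies in the middle range, combined with whether $y \in A$ or $y \notin A$; violating a defining inequality of $C_n^{(1)}$ or $C_n^{(2)}$ contributes a term of one of the forms $1 - \lambda^\ast c(x,A)$, $\lambda^\ast (c(x,y)-c(x,A))$, or $\lambda^\ast c(x,y) - 1$, each of which is at least $1/n$ on the corresponding slice. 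The additive tolerances $1/(\lambda^\ast n)$ built into the definitions of $C_n^{(1)}$ and $C_n^{(2)}$ are calibrated precisely to produce this common lower bound. Combined with the first inequality of Remark \ref{Rem-Eps-Opt-Measure-Structure}, namely that the integral of this integrand against $\pi_n^\prime$ is at most $\epsilon_n$, Markov's inequality yields $\pi_n^\prime(C_n^c) \leq n\epsilon_n = 1/(2n)$.

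To finish, define $\pi_n(B) := \pi_n^\prime(B \cap C_n) + \mu_n^{\mathrm{bad}}\big(\{x : (x,x) \in B\}\big)$ for $B \in \mathcal{B}(S \times S)$, where $\mu_n^{\mathrm{bad}}(\cdot) := \pi_n^\prime\big((\,\cdot\, \times S) \cap C_n^c\big)$ is the first marginal of the portion of $\pi_n^\prime$ lying outside $C_n$. The first marginal of $\pi_n$ is $\mu$ by construction, $c$ vanishes on the diagonal so $\int c\, d\pi_n = \int_{C_n} c\, d\pi_n^\prime \leq \delta$, and $\pi_n(C_n) \geq \pi_n^\prime(C_n) \geq 1 - 1/(2n) \geq 1 - 1/n$. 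Since $C_n^{(2)} \subseteq S \times A^c$, we have $C_n \cap (S \times A) = C_n^{(1)}$, and hence
\[ \pi_n(S \times A) \;\geq\; \pi_n^\prime(C_n^{(1)}) \;\geq\; \pi_n^\prime(S \times A) - \pi_n^\prime(C_n^c) \;\geq\; I - \epsilon_n - n\epsilon_n \;\geq\; I - 2/n. \]
The chief obstacle is the uniform-$1/n$ lower bound of the second paragraph: one must take apart the three regimes of $c(x,A)$ and carefully track how the additive tolerances of $C_n^{(1)}$ and $C_n^{(2)}$ interact with the positive part in $\phi_{\lambda^\ast}(x) = (1 - \lambda^\ast c(x,A))^+$ across all the sub-cases.
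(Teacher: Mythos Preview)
Your proposal is correct and follows essentially the same approach as the paper: both pick a near-optimal $\pi_n'\in\Phi_{\mu,\delta}$, use the first inequality of Remark~\ref{Rem-Eps-Opt-Measure-Structure} together with Markov's inequality to bound the mass outside $C_n$, and then reroute that residual mass to the diagonal. The only cosmetic difference is that the paper introduces an intermediate set $B_n:=\{(x,y):\phi_{\lambda^\ast}(x)-\mathbf{1}_A(y)+\lambda^\ast c(x,y)<1/n\}$ and checks $B_n\subseteq C_n$, whereas you verify the contrapositive directly (the integrand is $\geq 1/n$ on $C_n^c$); the paper also reroutes the mass outside $B_n$ rather than outside $C_n$, but since $B_n\subseteq C_n$ this makes no difference to the stated conclusions.
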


\begin{proof}
  \noindent \ \textit{Proof of Lemma \ref{Lem-Inter-Probs}.}  Since
  $I := \sup \{ \pi(S \times A): \pi \in \Phi_{\mu,\delta}\},$ we
  consider a collection
  $\{ \tilde{\pi}_n: n > 1\} \subseteq \Phi_{\mu,\delta}$ such that
  $I(\tilde{\pi}_n) \geq I - 1/n^2.$ For every $n > 1,$ if we let
  \[B_n := \left\{ (x,y) \in S \times S: \mathbf{1}_A(y) - \lambda^\ast
    c(x,y) > \big(1-\lambda^\ast c(x,A)\big)^+ - 1/n \right\},\] as a consequence of
  Markov's inequality and the characterization \eqref{Inter-Eps-Opt}
  in Remark \ref{Rem-Eps-Opt-Measure-Structure}, we obtain,
  \[ \tilde{\pi}_n(B_n) \geq 1 - \frac{1}{n} \int \big(
    \phi_{_{\lambda^\ast}}(x) - \mathbf{1}_A(y) - \lambda^\ast c(x,y)
    \big)d\tilde{\pi}_n(x,y) \geq 1 - \frac{1/n^2}{1/n} =
    1-\frac{1}{n}. \] For every $n > 1,$ given a pair
  $(\tilde{X}_n,\tilde{Y}_n)$ jointly distributed with law
  $\tilde{\pi}_n,$ we define a new jointly distributed pair
  $(X_n,Y_n)$ defined as follows,
  \begin{align*}
    (X_n, Y_n) :=
    \begin{cases}
      (\tilde{X}_n, \tilde{Y})_n \quad &\text{ if } (X_n,Y_n) \in B_n,\\
      (\tilde{X}_n, \tilde{X}_n) &\text{ otherwise.} 
    \end{cases}
  \end{align*}
  Our objective now is to show that the collection $\{\pi_n: n > 1\},$
  where $\pi_n := \text{Law}(X_n,Y_n),$ satisfies the desired
  properties.  We begin by verifying that
  $\pi_n \in \Phi_{\mu,\delta}$ here: As
  $\tilde{\pi}_n \in \Phi_{\mu,\delta}$ and $\tilde{X}_n := X_n,$ it
  is immediate that $\text{Law}(X_n) = \mu.$ In addition, as
  $\int cd\pi_n = \int_{B_n}cd\tilde{\pi}_n + 0,$ because
  $c(x,x) = 0,$ it follows from the non-negativity of $c(\cdot,\cdot)$
  and $\int cd\tilde{\pi}_n \leq \delta$ that
  $\pi_n \in \Phi_{\mu,\delta}.$ Next, as $B_n \subseteq C_n$ for
  every $n > 1,$ we have that
  $\pi_n(C_n) \geq \pi_n(B_n) \geq 1- 1/n,$ and
  $\int_{(S \times S) \setminus C_n} c(x,y)d\pi_n(x,y) = \int_{(S
    \times S) \setminus C_n} c(x,x) d\pi_n(x,y) = 0.$ Finally, for
  every $n \geq 1,$ $I(\pi_n) \geq I - 2/n$ is also immediate once we
  observe that
  \begin{align*}
    I(\pi_n) &= \pi_n(S \times A) \geq
               \pi_n((S \times A) \cap B_n) = \tilde{\pi}_n((S \times A) \cap B_n)\\
             &\geq \tilde{\pi}_n(S \times A) - \tilde{\pi}_n((S \times S)
               \setminus B_n) \geq I - \frac{1}{n^2} - \frac{1}{n},
  \end{align*}
  thus verifying all the desired poperties of the collection
  $\{\pi_n : n > 1\}.$ 
\end{proof}
\bigskip
\begin{proof}
  \noindent \ \textit{Proof of Lemma
    \ref{Lem-Optimality-lambda-Prob}.}  Consider a collection
  $\{\pi_n : n > 1\} \subseteq \Phi_{\mu,\delta}$ such that
  $\pi_n(C_n) \geq 1 - 1/n,$
  $\int_{(S \times S) \setminus C_n} cd\pi_n = 0,$ and
  $I(\pi_n) := \pi_n(S \times A) \geq I - 2/n.$ Such a collection
  exists because of Lemma \ref{Lem-Inter-Probs}.  We first observe
  that $\int cd\pi_n = \int_{C_n} cd\pi_n,$ because
  $\int_{(S \times S) \setminus C_n}cd\pi_n = 0.$ Next, recalling the
  definitions of subsets $D_n^{(i)}, i = 1,2,3$ introduced before
  stating Lemma \ref{Lem-Inter-Probs}, we use
  $D_n^{(i)} \subseteq C_n^{(i)},$ $i = 1,2,$ to observe that,  
  \begin{align*}
    c(x,A) \leq c(x,y) < c(x,A)+ 1/(n\lambda^\ast) \quad &\text{ if }
                                                           (x,y) \in D_n^{(1)},\\ 
    0 \leq c(x,y) < 1/(n\lambda^\ast) \quad &\text{ if }
                                              (x,y)  \in D_n^{(2)}, \text{ and } \\
    0 \leq c(x,y) < c(x,A) + 1/(n\lambda^\ast)
    % \leq 1 + 2/(n\lambda^\ast)
    \quad &\text{ if }
                                       (x,y) \in D_n^{(3)}.
  \end{align*}
  \textsc{Case 1: $\lambda^\ast > 0.$} Let us first restrict ourself to
  the case where $\lambda^\ast > 0.$ Then
  $\int cd\pi_n = \int_{C_n} cd\pi_n$ can be bounded from above and
  below as follows:
  \begin{align*}
    \int_{D_n^{(1)}} \hspace{-5pt} c(x,A)d\pi_n(x,y) \leq \int cd\pi_n \leq
    \int_{D_n^{(1)} \cup D_n^{(3)}} \hspace{-5pt} \left(c(x,A) + \frac{1}{n\lambda^\ast}  \right)  
    d\pi_n(x,y)  + 
    \frac{\pi_n(D_n^{(2)})}{n\lambda^\ast}. % + \left( 1+
    % \frac{2}{n\lambda^\ast}\right)\pi_n(D_n^{(3)}).
   % \label{Inter-LB-UB}
  \end{align*}
  Next, as $\lambda^\ast > 0$ and
  $I(\pi_n) = \pi_n(S \times A) \geq I - 2/n,$ we use the second part 
  of \eqref{Inter-Eps-Opt} to reason that 
  \begin{align}
    \label{Inter-Probab-LB}    
    \int_{D_n^{(1)}} \hspace{-5pt} c(x,A)d\pi_n(x,y) &\leq \int cd\pi_n
    \leq \delta,  \text{ and }\\
    \delta - \frac{1}{2n\lambda^\ast} \leq &\int cd\pi_n \leq 
    \int_{D_n^{(1)} \cup D_n^{(3)}} \hspace{-5pt} \left(c(x,A) + \frac{1}{n\lambda^\ast}  \right)  
    d\pi_n(x,y)  + 
    \frac{\pi_n(D_n^{(2)})}{n\lambda^\ast}, 
    \label{Inter-Probab-UB}    
  \end{align}
  % \begin{align}
  %   \delta - \frac{1}{2n\lambda^\ast} \leq \int cd\pi_n \leq 
  %   \int_{D_n^{(1)} \cup D_n^{(3)}} \hspace{-5pt} \left(c(x,A) + \frac{1}{n\lambda^\ast}  \right)  
  %   d\pi_n(x,y)  + 
  %   \frac{\pi_n(D_n^{(2)})}{n\lambda^\ast}, 
  %   \label{Inter-Probab-UB}    
  % \end{align}
  for every $n > 1.$ The next few steps are dedicated towards
  re-expressing the integrals in the left hand side of
  \eqref{Inter-Probab-LB} and right hand side of
  \eqref{Inter-Probab-UB} in terms of $\underline{c}$ and
  $\overline{c}$ to obtain,
  \begin{align}
    \varlimsup_{n \rightarrow \infty} \int_{D_n^{(1)} \cup D_n^{(3)}} \hspace{-5pt}
    \left(c(x,A) + \frac{1}{n\lambda^\ast}  \right) d\pi_n(x,y) \leq
    \overline{c} \quad \text{ and } \quad     \underline{c} \ \leq \ 
    \varliminf_{n \rightarrow \infty} \int_{D_n^{(1)}} c(x,A)d\pi_n(x,y).
    \label{Lim-Simplifying-Integrals}
  \end{align}
% \eqref{UB-Delta} in
%   terms of $\underline{c}$ and $\overline{c}$ is merely algebraic, and
%   we undertake the effort in Lemma \ref{Lem-Simplify-LB} and Lemma
%   \ref{Lem-Simplify-UB} in Appendix \ref{Appendix-Proofs} where we
%   show that 
  As every $(x,y) \in D_n^{(1)} \cup D_n^{(3)}$ satisfies
  $c(x,A) \leq (1+1/n)/\lambda^\ast,$ we have
  \begin{align*}
    \int_{D_n^{(1)} \cup D_n^{(3)}} \hspace{-5pt} \left(c(x,A) + \frac{1}{n\lambda^\ast}  \right)  
    d\pi_n(x,y) \leq \int_{\{x:c(x,A) \leq (1+1/n)/\lambda^\ast\}}
    \hspace{-40pt} c(x,A)d\pi_n(x,y) = \int_{\{x:c(x,A) \leq (1+1/n)/\lambda^\ast\}}
    \hspace{-40pt} c(x,A)d\mu(x), 
  \end{align*}
  thus establishing the first inequality in
  \eqref{Lim-Simplifying-Integrals} as a consequence of bounded
  convergence theorem.  Next, as
  $\{(x,y) \notin D_n^{(1)}: c(x,A) < 1/\lambda^\ast\}$ is contained
  in the union of $(S \times S) \setminus C_n$ and
  $\{(x,y) \in C_n: (1-1/n)/\lambda^\ast < c(x,A) < 1/\lambda^\ast\},$
  we obtain,
  \begin{align*}
    \underline{c} &:= \int_{\{x: c(x,A) < 1/\lambda^\ast\}}
    \hspace{-30pt} c(x,A)d\pi_n(x,y) \leq
    \int_{D_n^{(1)}}c(x,A)d\pi_n(x,y) + \frac{1}{\lambda^\ast}\pi_n\big((x,y) \notin
    D_n^{(1)}: c(x,A) < 1/\lambda^\ast \big)\\ 
    &\leq \int_{D_n^{(1)}}c(x,A)d\pi_n(x,y) + \frac{1}{\lambda^\ast}
      \left( \pi_n \big((x,y) \in C_n: (1-1/n)/\lambda^\ast \leq c(x,A) <
      1/\lambda^\ast\big) + \pi_n\big( (S \times S) \setminus
      C_n\big)\right)\\
    &\leq \int_{D_n^{(1)}}c(x,A)d\pi_n(x,y) + \frac{1}{\lambda^\ast}
      \left( \mu\big(x \in S: (1-1/n)/\lambda^\ast \leq c(x,A) <
      1/\lambda^\ast\big) + \frac{1}{n}\right),
  \end{align*}
  thus verifying the second inequality in
  \eqref{Lim-Simplifying-Integrals}.  Now that we have verified both
  the inequalities in \eqref{Lim-Simplifying-Integrals}, the
  conclusion that $\underline{c} \leq \delta \leq \overline{c}$ is
  automatic if we send $n \rightarrow \infty$ in
  \eqref{Inter-Probab-LB}, \eqref{Inter-Probab-UB} and use
  \eqref{Lim-Simplifying-Integrals}. 

  \textsc{Case 2: When $\lambda^\ast = 0.$} It follows from the
  definitions of sets $C_n^{(i)}$ and $D_n^{(i)}$ that
  $C_n^{(1)} = D_n^{(1)} = S \times A,$ and
  $C_n^{(2)} = D_n^{(2)} = D_n^{(3)} = \emptyset$ when
  $\lambda^\ast = 0.$ Recall that $C_n := C_n^{(1)} \cup C_n^{(2)}$
  and $\pi_n \in \Phi_{\mu,\delta}$ are such that
  $\pi_n(C_n) = \pi_n(S \times A) \geq 1-1/n.$ Let
  $h^-(u):= \int_{\{x:c(x,A) < u\}}c(x,A)d\mu(x)$ and
  $u_n := \inf\{u \geq 0: \mu(x:c(x,A) \leq u) \geq 1-1/n\}.$ As
  $\pi_n(S \times A) \geq 1-1/n$ and $c(x,y) \geq c(x,A)$ for any
  $x \in A,$ we have $\int cd\pi_n \geq \int_{S \times
    A}c(x,A)d\pi_n(x,y) \geq h^-(u_n).$ Next, as
  $\mu(x: c(x,A) \leq \sup_n u_n) = 1,$ we have
  \begin{align}
    \varliminf_{n \rightarrow \infty} \int cd\pi_n \geq \varliminf_{n
    \rightarrow \infty} h^-(u_n) = \lim_{n \rightarrow \infty}
    \int_{\{x: c(x,A) < u_n\}} c(x,A) (x)d\mu(x) = \int c(x,A)d\mu(x),    
    \label{Inter-lambda-Eq-0}
  \end{align}
  as a consequence of monotone convergence theorem. Further, as
  $\int cd\pi_n \leq \delta$ for every $n,$ it follows from
  \eqref{Inter-lambda-Eq-0} that
  $\overline{c} = \underline{c} = \int c(x,A)d\mu(x) \leq \delta.$
  This concludes the proof. 
\end{proof}
\bigskip

\begin{proof}
  \noindent \ \textit{Proof of Theorem \ref{Thm-Prob-Reform}.}
  Consider a collection
  $\{\pi_n : n > 1\} \subseteq \Phi_{\mu,\delta}$ such that
  $\pi_n(C_n) \geq 1 - 1/n,$
  $\int_{(S \times S) \setminus C_n} cd\pi_n = 0,$ and
  $I(\pi_n) := \pi_n(S \times A) \geq I - 2/n.$ Such a collection
  exists because of Lemma \ref{Lem-Inter-Probs}. Similar to the proof
  of Lemma \ref{Lem-Optimality-lambda-Prob}, we recall the definitions
  of subsets $D_n^{(i)}, i = 1,2,3$ introduced before stating Lemma
  \ref{Lem-Inter-Probs}, to observe that
  $D_n^{(i)} \subseteq C_n^{(i)},$ $i = 1,2,$ and consequently, 
  \begin{align*}
    \mathbf{1}_A(y) = 1 \text{ if }   (x,y) \in D_n^{(1)}, \quad
    \text{ and } \mathbf{1}_A (y) = 0 \text{ if } (x,y)  \in D_n^{(2)}.% , \text{ and } \\
    % 0 \leq \mathbf{1_A}(y) \leq 1 
    % % \leq 1 + 2/(n\lambda^\ast)
    % \quad &\text{ if }
    %                                    (x,y) \in D_n^{(3)}.
  \end{align*}
  As a result,
  $\pi_n\left( D_n^{(1)}\right) \leq \pi_n(S \times A) \leq \pi_n
  \left( (S \times S) \setminus D_n^{(2)}\right).$ Combining this with the
  observation that $I - 2/n \leq \pi_n(S \times A) \leq I,$ we obtain,  
  \begin{align}
    \pi_n\left( D_n^{(1)}\right) \leq I \quad \text{ and } \quad  \pi_n
  \left((S \times S) \setminus D_n^{(2)} \right) \geq I - \frac{2}{n},
    \label{Inter-Thm-Prob-1}
  \end{align}
  for every $n > 1.$ Similar to the proof of Lemma
  \ref{Lem-Optimality-lambda-Prob}, the rest of the proof is dedicated
  towards proving
  \begin{align}
    \varlimsup_n \pi_n\left( (S \times S) \setminus D_n^{(2)} \right) \ \leq
    \ \mu\big( x \in S:
    c(x,A) \leq 1/\lambda^\ast\big) \  \leq \ \varliminf_n \pi_n
  \left( D_n^{(1)} \right),
    \label{Inter-Thm-Prob-2}
  \end{align}
  when $\underline{c} = \overline{c}.$ The first inequality is
  immediate when we observe that every $(x,y) \notin D_n^{(2)}$ is
  such that $c(x,A) \leq (1+1/n)/\lambda^\ast,$ and therefore,
  \begin{align*}
    \varlimsup_n \pi_n \left( (S \times S) \setminus D_n^{(2)} \right) \leq \ 
    \varlimsup_n \pi_n \big( (x,y)
    : c(x,A) \leq (1+1/n)/\lambda^\ast\big) = \lim_n \mu  \big( x
    : c(x,A) \leq (1+1/n)/\lambda^\ast\big), 
  \end{align*}
  which equals $\mu(x:c(x,A) \leq 1/\lambda^\ast),$ due to bounded 
  convergence theorem. This verifies the first inequality in
  \eqref{Inter-Thm-Prob-1}.  Next, as
  $D_n^{(1)} := \{(x,y) \in C_n: c(x,A) \leq (1-1/n)/\lambda^\ast\},$
  the probability
  $\pi_n \left( \{(x,y): c(x,A) \leq 1/\lambda^\ast\}\right)$ equals,
  \begin{align*}
    \pi_n\left( D_n^{(1)} \right)  + \pi_n \left( \big\{(x,y)
    \in C_n: (1-1/n)/\lambda^\ast <  c(x,A) \leq
    1/\lambda^\ast\big\}\right) + \pi_n \left( \big\{ (x,y) \notin
    C_n: c(x,A) \leq 1/\lambda^\ast \big\}\right).   
  \end{align*}
  Since $\pi_n(C_n) \geq 1-1/n$ and
  $\mu(x: c(x,A) \leq 1/\lambda^\ast) = \pi_n \left( \{(x,y): c(x,A)
    \leq 1/\lambda^\ast\}\right),$ we obtain that
  \begin{align*}
    \pi_n\left(D_n^{(1)} \right) \geq \ \mu \big(x: c(x,A) \leq
    1/\lambda^\ast \big) - \mu \left( \big\{x
    : (1-1/n)/\lambda^\ast <  c(x,A) \leq
    1/\lambda^\ast\big\}\right) - 1/n. 
  \end{align*}
  Further, as $\underline{c} = \overline{c},$ we have
  $\mu(x: c(x,y) = 1/\lambda^\ast) = 0,$ and therefore,
  $ \varliminf_n \pi_n\left(D_n^{(1)} \right) \geq \ \mu \big(x:
  c(x,A) \leq 1/\lambda^\ast \big),$ thus verifying the inequality in
  the right hand side of \eqref{Inter-Thm-Prob-2}. Now that both the
  inequalities in \eqref{Inter-Thm-Prob-2} are verified, we combine
  \eqref{Inter-Thm-Prob-1} with \eqref{Inter-Thm-Prob-2} to obtain
  that $I = \mu(x \in S: c(x,A) \leq 1/\lambda^\ast).$ 
  % \begin{align*}
  %   \varliminf_n \pi_n\left(D_n^{(1)} \right) \geq \ \mu \big(x: c(x,A) \leq
  %   1/\lambda^\ast \big)
  % \end{align*}
\end{proof}

% numerically solve for $\lambda^\ast$ is to obtain samples of $X$ under
% the reference measure $\mu$ in order to compute the above integral and
% identify $1/\lambda^\ast$ as
% $h^{-1}(\delta) = \inf\{u: h(u) \geq \delta\}.$

% \noindent \textit{Case 3: Assume $\lambda^\ast = 0.$} First, recall
% that $\sup\{ P(A): d_c(\mu,P) \leq \delta\}$ is an increasing function
% of $\delta.$ The case $\lambda^\ast = 0$ happens if and only if
% $\delta$ is so large that
% \[\sup\{ P(A): d_c(\mu,P) \leq \delta\} = 1.\] This is because, when
% $\lambda^\ast = 0,$ it is automatic that
% $\phi_{\lambda^\ast}(x) = \sup_{y \in S}\{ \mathbf{1}_A(y) - 0\times
% c(x,y)\} = 1$ for all $x \in S$ and consequently,
% $\sup\{P(A): d_c(\mu,P) \leq \delta\} = \int
% \phi_{\lambda^\ast}(x)d\mu(x) = 1.$ Further, when $\delta$ is larger
% than $\int_{\{c(x,A) < \infty\}} c(x,A)d\mu(x)$ (that is, when the
% budget for quantifying ambiguity, $\delta,$ is larger than what is
% necessary to move all the probability mass to the set $A$ in order to
% maximize $P(A)$), the dual optimizer $\lambda^\ast$ has to be
% necessarily zero in order to satisfy the complementary slackness
% condition \eqref{Comp-Slack-2}. 

\section{Computing ruin probabilities: A first example.}
\label{Sec-App-LC}
In this section, we consider an example with the objective of
computing a worst-case estimate of ruin probabilities in a ruin model
whose underlying probability measure is not completely specified. As
mentioned in the Introduction, such a scenario can arise for various
reasons, including the lack of data necessary to pin down a
satisfactory model. A useful example to keep in mind would be the
problem of pricing an exotic insurance contract (for example, a
contract covering extreme climate events) in a way the probability of
ruin is kept below a tolerance level.
\begin{example}
  \textnormal{ In this example, we consider the celebrated
    Cramer-Lundberg model, where a compound Poisson process is used to
    calculate ruin probabilities of an insurance risk/reserve process.
    % (see, for example, \citet{MR2766220} for details and historical
    % significance of the model).
    The model is specified by 4
    primitives: initial reserve $u,$ safety loading $\eta > 0,$ the
    rate $\nu$ at which claims arrive, and the distribution of
    claim sizes $F$ with first and second moments $m_1$ and $m_2$
    respectively. Then the stochastic process
    \begin{align*}
      R(t) = u + (1 + \eta) \nu m_1 t - \sum_{i=1}^{N_t} X_i,
    \end{align*}
    specifies a model for the reserve available at time $t.$ Here,
    $X_n$ denotes the size of the $n$-th claim, and the collection
    $\{X_1,X_2,\ldots\}$ is assumed to be independent samples from the
    distribution $F.$ Further, $\tilde{p} := (1+\eta) \nu m_1$ is the
    rate at which a premium is received, and $N_t$ is taken to be a
    Poisson process with rate $\lambda.$ Then one of the important
    problems in risk theory is to calculate the probability
    \[\psi (u,T) := \Pr\left\{\inf_{t \in [0,T]} R(t) \leq 0 \right\}\]
    that the insurance firm runs into bankruptcy before a specified
    duration $T.$}

  \textnormal{Despite the simplicity of the model, existing results in
    the literature do not admit simple methods for the computation of
    $\psi(u,T)$ (see \citet{MR2766220}, \citet{MR1458613},
    \citet{MR1680267} and references therein for a comprehensive
    collection of results). In addition, if the historical data is not
    adequately available to choose an appropriate distribution for
    claim sizes, as is typically the case in an exotic insurance
    situation, it is not uncommon to use a diffusion approximation
    \begin{align*}
      R_B(t) &:= u + (1+\eta)\nu m_1 t - (\nu m_1 t + \sqrt{\nu m_2}
      B(t))\\\\
      &\ = u + \eta \nu m_1 t - \sqrt{\nu m_2} B(t)
    \end{align*}
    that depends only on first and second moments of the claim size
    distribution $F.$ Here, the stochastic process
    $(B(t): 0 \leq t \leq T)$ denotes the standard Brownian
    motion, and 
        \begin{align*}
      \psi_B(u,T) := %\Pr\left\{ R_B(t) \leq 0 \text { for some } t \leq
      % T\right\} %\ =
      \Pr \left\{ \sup_{t \in [0,T]} \big( \sqrt{\nu m_2} B(t) -
      \eta \nu m_1 t \big) \geq u \right\},
      \end{align*}
      is to serve as a diffusion approximation based substitute for
      ruin probabilities $\psi(u,T).$ See the seminal works of
      Iglehart and Harrison (\citet{Iglehart1969, HARRISON197767}) for
      a justification and some early applications of diffusion
      approximations in computing insurance ruin. Such diffusion
      approximations have enabled approximate computations of various
      path-dependent quantities, which may be otherwise
      intractable. However, as it is difficult to verify the accuracy
      of the Brownian approximation of ruin probabilities, in this
      example, we use the framework developed in Section
      \ref{Sec-Gen-Results} to compute worst-case estimates of ruin
      probabilities over all probability measures in a neighborhood
      around the baseline Brownian motion $B(t)$ driving the ruin
      model $R_B(t).$}

    \textnormal{For this purpose, we identify the Polish space where
      the stochastic processes of our interest live as the Skorokhod
      space $S = D([0,T], \mathbb{R})$ equipped with the $J_1$
      topology. In other words, $S = D([0,T], \mathbb{R})$ is simply
      the space of real-valued right-continuous functions with left
      limits (rcll) defined on the interval $[0,T],$ equipped with the
      $J_1$-metric $d_{_{J_1}}.$ Please refer Lemma
      \ref{Lem-Ruin-Prob-BM-App} in Appendix \ref{Appendix-Proofs} for
      an expression of $d_{_{J_1}}$ and Chapter 3 of \citet{MR1876437}
      for an excellent exposition on the space $D([0,T], \mathbb{R})$.
      Next, we take the transportation cost (corresponding to $p$-th
      order Wasserstein distance) as
      \[c(x,y) = \left(d_{_{J_1}}(x,y)\right)^p \quad\quad x, y \in
      S,\]
      for some $p \geq 1,$ and the baseline measure $\mu$ as the
      probability measure induced in the path space by the Brownian
      motion $B(t).$ In addition, if we let
      \[A_u := \bigg\{x \in S: \sup_{t \in [0,T]} \big( \sqrt{\nu m_2}
        x(t) - \nu m_1 t \big) \geq u \bigg\},\] then the following
      observations are in order: First, the Brownian approximation for
      ruin probability $\psi_B(u,T)$ equals $\mu(A_u).$ Next, the set
      $A_u$ is closed (verified in Lemma \ref{Lem-Au-Closed} in
      Appendix \ref{Appendix-Proofs}), and hence the function
      $\mathbf{1}_{A_u}(\cdot)$ is upper semicontinuous. Further, it
      is verified in Lemma \ref{Lem-Ruin-Prob-BM-App} in Appendix
      \ref{Appendix-Proofs} that
      \begin{align}
        c(x,A_u) := \inf\{ c(x,y): y \in A_u\} = \left(u - \sup_{t \in
        [0,T]} \big(\sqrt{\nu m_2} x(t) - 
        \eta \nu m_1 t\big) \right)^p, \quad \text{ for } x \notin A_u.
        \label{Proj-Ruin-Prob}
        \end{align}
        As $h(s) = E_{\mu}[c(X,A); c(X,A_u) \leq s]$ is continuous,
        for any given $\delta > 0,$ it follows from Theorem
        \ref{Thm-Prob-Reform} that 
       \begin{align}
         \psirob(u,T) :=\sup \{ P(A_u): d_c(P, \mu) \leq \delta\} =
         \mu\left\{ x \in S: c(x,A_u) \leq \frac{1}{\lambda^\ast}
         \right\},  
         \label{Inter-Ruin-Prob}
      \end{align}
      where, the level $1/\lambda^\ast$ is identified as
      $h^{-1}(\delta) = \inf\{ s \geq 0: h(s) \geq \delta \}.$ 
% $c(x,A_u) := \inf\{ c(x,y): y \in A_u\}$ and the Lagrange
%       multiplier $\lambda^\ast,$ if positive, solves
%       \begin{align*}
%         \int c(x,A_u) \mathbf{1}_{\left\{c(x,A_u) \leq
%         \frac{1}{\lambda^\ast} \right\}}(x) d\mu(x) = \delta.
%       \end{align*}
    }
    \textnormal{ % Therefore, the only task remaining in the
      % characterization of worst-case estimates $\psirob(u,T)$ is to
      % compute the minimum cost $c(x,A_u),$ for $x \notin A_u.$ In Lemma
      % \ref{Lem-Ruin-Prob-BM-App} in Appendix \ref{Appendix-Proofs}, we
      % indeed identify that
      %   \begin{align}
      %   c(x,A_u) = \left(u - \sup_{t \in [0,T]} \big(\sqrt{\nu m_2} x(t) -
      %   \eta \nu m_1 t\big) \right)^p, \quad \text{ for } x \notin A_u.
      %   \label{Proj-Ruin-Prob}
      %   \end{align}
      As $c(x,A_u)$ admits a simple form as in \eqref{Proj-Ruin-Prob},
      following \eqref{Inter-Ruin-Prob}, the problem of computing
      $\psirob(u,T)$ becomes as elementary as
      \begin{align}
        \psirob(u,T) = \ \Pr\left\{ \sup_{t \in [0,T]} \big( \sqrt{\nu
                       m_2} B_t - \eta \nu m_1 t \big) > u -
                       \left(\frac{1}{\lambda^\ast}\right)^{1/p}\right\}
                        = \psi_B(\tilde{u},T),
        \label{Ruin-Prob-1dim-shift}
      \end{align}
      with $\tilde{u} := u - (\lambda^\ast)^{-1/p}.$ Thus, the
      computation of worst-case ruin probability remains the problem
      of evaluating the probability that a Brownian motion with
      negative drift crosses a positive level, $\tilde{u},$ smaller
      than the original level $u.$ In other words, the presence of
      model ambiguity has manifested itself only in reducing the
      initial capital to a new level $\tilde{u}.$ Apart from the
      tractability, such interpretations of the worst-case ruin
      probabilities in terms of level crossing a modified ruin set is
      an attractive feature of using Wasserstein distances to model
      distributional ambiguities.}

     \noindent
    \textnormal{\textbf{Numerical illustration:} To make the
      discussion concrete, we consider a numerical example where
      $T = 100, \nu = 1, p = 2,$ and the safety loading $\eta = 0.1.$
      The claim sizes are taken from a distribution $F$ that is not
      known to the entire estimation procedure. Our objective is to
      compute the Brownian approximation to the ruin probability
      $\psi_B(u,T) = \mu(A_u),$ and its worst-case upper bound
      %\begin{align*}
        $\psirob(u,T) := \sup \big\{ P(A_u) : d_c(\mu,P) \leq
        \delta\big\}.$ 
      %\end{align*} 
        The estimation methodology is data-driven in the following
        sense: we derive the estimates of moments $m_1, m_2$ and
        $\delta$ from the observed realizations $\{X_1,\ldots,X_N\}$
        of claim sizes. While obtaining estimates for moments $m_1$
        and $m_2$ are straightforward, to compute $\delta,$ we use the
        claim size samples $\{ X_1,\ldots,X_N\}$ to embed realizations
        of compound Poisson risk process in a Brownian motion, as in
        \citet{khoshnevisan1993}.  Specific details of the algorithm
        that estimates $\delta$ can be found in Appendix
        \ref{Appendix-Embeddings}. The estimate of $\delta$ obtained
        is such that the compensated compound Poisson process of
        interest that models risk in our setup lies within the
        $\delta$-neighborhood $\{P: d_c(\mu,P) \leq \delta\}$ with
        high probability.  Next, given the knowledge of Lagrange
        multiplier $\lambda^\ast,$ the evaluation of $\psi_B(u,T)$ and
        $\psirob(u,T) = \psi_B(\tilde{u},T)$ (with
        $\tilde{u} = u - 1/\lambda^\ast)$ are straightforward, because
        of the closed-form expressions for level crossing
        probabilities of Brownian motion. Computation of $\lambda^*$
        is also elementary, and can be accomplished in multiple
        ways. We resort to an elementary sample average approximation
        scheme that solves for $\lambda^*.$ The estimates of Brownian
        approximation to ruin probability $\psi_B(u,T)$ and the
        worst-case ruin probability $\psirob(u,T),$ for various values
        of $u,$ are plotted in Table \ref{Table-Ruinprob-BMapprox}. To
        facilitate comparison, we have used a large deviation
        approximation of the ruin probabilities $\psild(u,T)$ that
        satisfy $\psild(u,T) \sim \psi(u,T),$ as
        $u \rightarrow \infty,$ as a common denominator to compare
        magnitudes of the Brownian estimate $\psi_B(u,T)$ and the
        worst-case bound $\psirob(u,T).$ In particular, we have drawn
        samples of claim sizes from the Pareto distribution
        $1-F(x) = 1 \wedge x^{-2.2}.$ While the Brownian approximation
        $\psi_B(u,T)$ remarkably underestimates the ruin probability
        $\psi(u,T)$, the worst-case upper bound $\psirob(u,T)$ appears
        to yield estimates that are conservative, yet of the correct
        order of magnitude.}
\begin{table} [htb!]
  \centering
  \caption{Comparison of ruin probability estimate $\psi_B(u,T)$ and
    the worst-case upper bound $\psirob(u,T)$: Example
    \ref{Eg-Ruin-prob-BM-approx}. The denominator $\psild(u,T)$ is such
    that $\psild(u,t)/\psi(u,T) \rightarrow 1,$ as $u \rightarrow \infty.$} 
  \begin{tabular}[h!]{ | c | c | c | c | c | c|}
    \hline
            & & & & & \\
    $u$ & $\psi_B(u,T)$ & $\tilde{u}$ & $\psirob(u,T)$ &
                                                         $\frac{\psi_B(u,T)}{\psild(u,T)}$
 & $\frac{\psirob(u,T)}{\psild(u,T)}$\\ 
        & & & & & \\
    \hline 
            & & & & & \\
    $50$ & $5.18\times 10^{-2}$ & $28.23$ & $0.2294$ & $3.32$  & $14.71$\\
    $100$ & $4.26 \times 10^{-4}$ & $50.78$ & $4.88 \times 10^{-2}$ &
                                                                  $1.07
                                                                  \times
                                                                  10^{-1}$
 & $12.28$\\
    $150$ & $4.36 \times 10^{-7}$ & $62.76$ & $1.84 \times 10^{-2}$ &
                                                                  $2.52
                                                                  \times
                                                                  10^{-4}$&
                                                                            $10.65$\\
    $200$ & $5.05 \times 10^{-11}$ & $69.40$ & $1.02 \times 10^{-2}$ & $5.35 \times 10^{-8}$&$10.80$\\
    $250$ & $6.75 \times 10^{-16}$ & $74.29$ & $6.50 \times 10^{-3}$& $1.15 \times 10^{-12}$&$10.98$\\
\hline
 \end{tabular}
\label{Table-Ruinprob-BMapprox}
 \end{table}
 \textnormal{\newline For further discussion on this toy example, let
   us say that an insurer paying for the claims distributed according
   to $X_i$ has a modest objective of keeping the probability of ruin
   before time $T$ at a level below 0.01. The various combinations of
   initial capital $u$ and safety loading factors $\eta$ that achieve
   this objective are shown in Figure
   \ref{Fig-Loading-vs-Capital}. While the $(\eta,u)$ combinations
   that work for the Brownian approximation model is drawn in red, the
   corresponding $(\eta, u)$ combinations that keep
   $\psirob(u,T) \leq 0.01$ is shown in blue.}
 \begin{figure}[h!]
   \centering
   \includegraphics[scale=0.3]{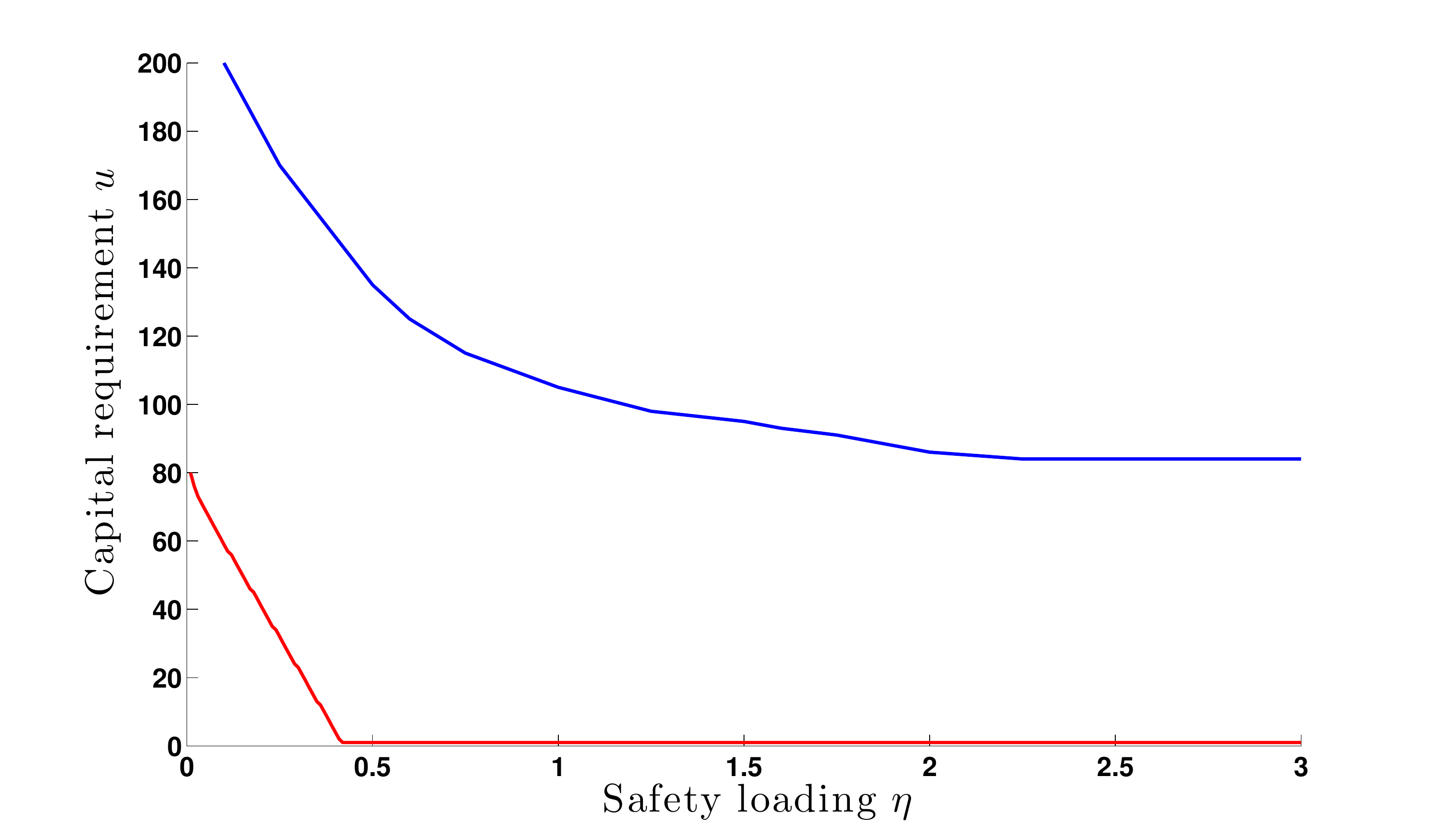}
   \caption{ Safety loading vs capital requirement for the Brownian
     model $R_B(t)$ (in red) and its robust counterpart (in blue) in
     Example \ref{Eg-Ruin-prob-BM-approx}. The objective is to keep
     the probability of ruin below 0.01.}
\label{Fig-Loading-vs-Capital}
 \end{figure}
 \textnormal{ \newline \newline \textbf{A discussion on regulatory
     capital requirement.}  For the safety loading $\eta = 0.1$ we
   have considered, the Brownian approximation model $R_B(t)$ requires
   that the initial capital $u$ be at least $60$ to achieve
   $\psi_B(u,T) \leq 0.01$. On the other hand, to keep the robust ruin
   probability estimates $\psirob(u,T)$ below $0.01,$ it is required
   that $u \approx 200,$ roughly 3 times more than the capital
   requirement of the Brownian model. From the insurer's point of
   view, due to the model uncertainty the contract is faced with, it
   is not uncommon to increase the premium income (or) initial capital
   by a factor of 3 or 4 (referred as ``hysteria factor'', see
   \citet{embrechts1998living}, \citet{Lewis2007}). This increase in
   premium can be thought of as a guess for the price for statistical
   uncertainty. Choosing higher premium and capital (larger $\eta$ and
   $u$), along the blue curve in Figure \ref{Fig-Loading-vs-Capital},
   as dictated by the robust estimates of ruin probabilities
   $\psirob(u,T),$ instead provides a mathematically sound way of
   doing the same. Unlike the Brownian approximation model, the blue
   curve demonstrates that one cannot decrease the probability of ruin
   by arbitrarily increasing the premium alone, the initial capital
   also has to be sufficiently large. This prescription is consistent
   with the behaviour of markets with heavy-tailed claims where, in
   the absence of initial capital, a few initial large claims that
   occur before enough premium income gets accrued are enough to cause
   ruin. The minimum capital requirement $u$ prescribed according to
   the worst-case estimates $\psirob(u,T)$ can be thought of as the
   regulatory minimum capital requirement.}
\label{Eg-Ruin-prob-BM-approx}
\end{example}

\begin{remark}
  \textnormal{ Defining the candidate set of ambiguous probability
    measures via KL-divergence (or) other likelihood based divergence
    has been the most common approach while studying distributional
    robustness (see, for example,
    \citet{Hans_Sarg,Ben_Tal,MAFI:MAFI12050}). This would not have
    been useful in this setup, as the compensated Poisson process that
    models risk is not absolutely continuous with respect to Brownian
    motion. In general, it is normal to run into absolute continuity
    issues when we deal with continuous time stochastic processes. In
    such instances, as demonstrated in Example
    \ref{Eg-Ruin-prob-BM-approx}, modeling via optimal transport costs
    (or) Wasserstein distances not only offers a tractable alternative,
    but also yields insightful equivalent reformulations.}
\label{Rem-Abs-Continuity}
\end{remark}

\section{Proof of the duality theorem.}
\label{Sec-Duality-Proof}
\subsection{Duality in compact spaces.}
\label{Sec-Duality-Compact}
We prove Theorem \ref{THM-STRONG-DUALITY} by first proving the
following progressively strong duality results in Polish spaces $S$
that are compact. Proofs of all the technicalities that are not
central to the argument are relegated to Appendix
\ref{Appendix-Proofs}.

\begin{proposition}
  \label{PROP-DUALITY-COMPACT-1}
  Suppose that $S$ is a compact Polish space, and
  $f: S \rightarrow \mathbb{R}$ satisfies Assumption (A2). In addition
  to satisfying Assumption (A1), suppose that
  $c: S\times S \rightarrow \mathbb{R}_+$ is continuous. Then $I = J,$
  and a primal optimizer $\pi^* \in \Phi_{\mu,\delta}$ satisfying
  $I(\pi^*) = I$ exists.
\end{proposition}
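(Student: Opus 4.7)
The plan is to combine three ingredients: weak compactness of the primal feasible set, Sion's minimax theorem applied to the Lagrangian, and a measurable selection argument that evaluates the inner supremum in closed form. The compactness of $S$ and continuity of $c$ make each of these steps cleaner than in the general case, and the argument here serves as the base case that will later be extended via approximation.

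First, I would establish existence of the primal optimizer. Since $S$ is a compact Polish space, $P(S\times S)$ equipped with the weak topology is a compact metric space by Prokhorov's theorem. The feasible set $\Phi_{\mu,\delta}$ is the intersection of $\Pi_\mu := \bigcup_{\nu\in P(S)}\Pi(\mu,\nu)$, which is closed (and convex) since the projection onto the first marginal is weakly continuous, with the level set $\{\pi:\int c\,d\pi\leq\delta\}$, which is closed because $c$ is continuous and bounded on the compact $S\times S,$ making $\pi\mapsto\int c\,d\pi$ weakly continuous. Hence $\Phi_{\mu,\delta}$ is weakly compact. Because $f$ is upper semicontinuous and bounded on compact $S$, the functional $\pi\mapsto\int f\,d\pi$ is weakly upper semicontinuous on $P(S\times S)$, and therefore attains its supremum on $\Phi_{\mu,\delta}$.

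Second, I would prove $I=J$ by applying Sion's minimax theorem to the Lagrangian
\[
L(\pi,\lambda) := \int f(y)\,d\pi(x,y) + \lambda\Bigl(\delta - \int c(x,y)\,d\pi(x,y)\Bigr),
\]
viewed on $\Pi_\mu\times[0,\infty)$. The set $\Pi_\mu$ is convex and weakly compact, and $[0,\infty)$ is convex. For fixed $\lambda$, the map $\pi\mapsto L(\pi,\lambda)$ is affine and weakly upper semicontinuous, hence quasi-concave and usc; for fixed $\pi$, the map $\lambda\mapsto L(\pi,\lambda)$ is affine and continuous, hence quasi-convex and lsc. Sion's theorem yields
\[
\sup_{\pi\in\Pi_\mu}\inf_{\lambda\geq 0} L(\pi,\lambda) = \inf_{\lambda\geq 0}\sup_{\pi\in\Pi_\mu} L(\pi,\lambda).
\]
The left-hand side equals $I$, since $\inf_{\lambda\geq 0} L(\pi,\lambda)$ is $\int f\,d\pi$ when $\int c\,d\pi\leq\delta$ and $-\infty$ otherwise.

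Third, I would identify the right-hand side with $J$. For each fixed $\lambda\geq 0$, I claim that $\sup_{\pi\in\Pi_\mu}\int\bigl(f(y)-\lambda c(x,y)\bigr)d\pi(x,y)=\int\phi_\lambda\,d\mu$. The inequality $\leq$ is immediate from disintegrating any $\pi\in\Pi_\mu$ as $\pi(dx,dy)=\mu(dx)\,\pi_x(dy)$ and using the pointwise definition of $\phi_\lambda$. For the reverse inequality, I would invoke Berge's maximum theorem: since $(x,y)\mapsto f(y)-\lambda c(x,y)$ is upper semicontinuous and $S$ is compact, $\phi_\lambda$ is upper semicontinuous (in particular Borel), and the argmax correspondence $x\mapsto\arg\max_y\bigl(f(y)-\lambda c(x,y)\bigr)$ has nonempty compact values and closed graph, so it admits a Borel selector $T:S\to S$ (e.g., by Kuratowski--Ryll-Nardzewski). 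Then $\pi^{(\lambda)}:=(\mathrm{id},T)_*\mu\in\Pi_\mu$ and $\int(f-\lambda c)\,d\pi^{(\lambda)}=\int\phi_\lambda\,d\mu$. Hence $\sup_\pi L(\pi,\lambda)=\lambda\delta+\int\phi_\lambda\,d\mu=J(\lambda,\phi_\lambda)\geq J$, and combining with weak duality $J\geq I$ gives $I=J$.

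The main obstacle is the measurable selection step: one must carefully confirm that Berge's theorem applies (requiring joint upper semicontinuity of $f(y)-\lambda c(x,y)$, which holds here precisely because $c$ is continuous, not merely lower semicontinuous) and that a Borel selector can be chosen so that $(\mathrm{id},T)_*\mu$ is a bona fide element of $\Pi_\mu$. Continuity of $c$ and compactness of $S$ trivialize several issues (boundedness, joint usc of the Lagrangian integrand, weak continuity of the cost functional) that will require substantially more work when these hypotheses are relaxed in subsequent propositions.
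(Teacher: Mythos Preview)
Your proof is correct but takes a genuinely different route from the paper. The paper proves Proposition~\ref{PROP-DUALITY-COMPACT-1} via Fenchel duality: it sets up the dual pair $C_b(S\times S)$ and $M(S\times S)$, defines convex sets $C=\{\phi(x)+\lambda c(x,y)\}$ and $D=\{g:g(x,y)\geq f(y)\}$ with functionals $\Phi(g)=\lambda\delta+\int\phi\,d\mu$ and $\Gamma\equiv 0$, computes the conjugates $\Phi^*,\Gamma^*$ explicitly (ruling out signed measures via a separate lemma), and then obtains both $I=J$ and the existence of a primal maximizer in a single stroke from the Fenchel duality theorem. You instead (i) prove primal existence directly from weak compactness of $\Phi_{\mu,\delta}$ and upper semicontinuity of $\pi\mapsto\int f\,d\pi$, (ii) apply Sion's minimax theorem to the Lagrangian on $\Pi_\mu\times[0,\infty)$, and (iii) identify $\sup_{\pi\in\Pi_\mu}L(\pi,\lambda)$ with $\lambda\delta+\int\phi_\lambda\,d\mu$ through a Berge--maximum--theorem argument plus a measurable selector. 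The Fenchel approach is in the spirit of the classical Kantorovich duality proof and delivers everything from one abstract theorem; your approach is more modular and uses arguably more elementary tools, and it has the pleasant feature of directly producing the specific dual form $(\lambda,\phi_\lambda)$ rather than the general pair $(\lambda,\phi)$. It is also worth noting that the paper itself invokes Sion's minimax theorem later, in the extension to non-compact $S$ in the proof of Theorem~\ref{THM-STRONG-DUALITY}(a), so your argument effectively anticipates that structure. Your closed-graph verification for the argmax correspondence relies precisely on the continuity of $c$ (so that $c(x_n,y_n)\to c(x,y)$), which is exactly the extra hypothesis distinguishing this proposition from Proposition~\ref{PROP-DUALITY-COMPACT-2}; you correctly flag this as the place where mere lower semicontinuity of $c$ would fail.
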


\begin{proposition}
  \label{PROP-DUALITY-COMPACT-2}
  Suppose that $S$ is a compact Polish space,
  $f: S \rightarrow \mathbb{R}$ satisfies Assumption (A2), and
  $c: S\times S \rightarrow \mathbb{R}_+$ satisfies Assumption (A1).
  Then $I = J,$ and a primal optimizer $\pi^* \in \Phi_{\mu,\delta}$
  satisfying $I(\pi^*) = I$ exists.
  % Let $S$ be a compact Polish space. If $f \in L^1(d\mu)$ is upper
  % semicontinuous and $c: S\times S \rightarrow \mathbb{R}_+$ is lower
  % semicontinuous, then $I = J.$ In addition, a primal optimizer
  % $\pi^* \in \Phi_{\mu,\delta}$ satisfying $I(\pi^*) = I$ exists.
\end{proposition}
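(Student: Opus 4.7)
The plan is to deduce Proposition \ref{PROP-DUALITY-COMPACT-2} from Proposition \ref{PROP-DUALITY-COMPACT-1} by approximating the lower semicontinuous cost $c$ from below with an increasing sequence of continuous costs and then passing to the limit via weak compactness of $P(S \times S)$. Equipping $S \times S$ with a compatible product metric $\rho$, I would take the Lipschitz regularizations
\[
c_n(x,y) := \inf_{(x',y') \in S \times S}\bigl\{ c(x',y') + n\, \rho\bigl((x,y),(x',y')\bigr)\bigr\}, \qquad n \geq 1.
\]
Each $c_n$ is $n$-Lipschitz (hence continuous) and nonnegative, and a standard property of inf-convolutions of lower semicontinuous functions yields $c_n \uparrow c$ pointwise on $S\times S$. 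Trivially $c_n(x,x)=0$; and if $c_n(x,y)=0$ for some $x\neq y$, then one could find $(x_k',y_k')\to(x,y)$ with $c(x_k',y_k')\to 0$, contradicting $\liminf_k c(x_k',y_k') \geq c(x,y) > 0$, so each $c_n$ satisfies Assumption (A1). Proposition \ref{PROP-DUALITY-COMPACT-1} therefore yields $I_n = J_n$ together with a primal optimizer $\pi_n$ attaining $I_n$ in the problem defined with cost $c_n$. Since $c_n \leq c$, enlarging the primal feasible set and shrinking the dual feasible set gives $I \leq I_n$ and $J \leq J_n = I_n$ for every $n$.

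Next, using that $P(S \times S)$ is sequentially weakly compact on the compact Polish space $S\times S$, I extract a subsequential weak limit $\pi_{n_k} \rightharpoonup \pi^*$. Testing against $g \in C_b(S)$ shows the first marginal of $\pi^*$ equals $\mu$. For the cost constraint I fix $m$; since $c_m$ is continuous and bounded on the compact $S\times S$, weak convergence combined with $\int c_m\, d\pi_{n_k} \leq \int c_{n_k}\, d\pi_{n_k} \leq \delta$ for every $n_k \geq m$ yields $\int c_m\, d\pi^* \leq \delta$. Monotone convergence as $m \to \infty$ then gives $\int c\, d\pi^* \leq \delta$, so $\pi^* \in \Phi_{\mu,\delta}$.

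Finally, decomposing $f = f^+ - f^-$ and applying the Portmanteau theorem (to $f^+$, which is upper semicontinuous and bounded above on compact $S$, and to $f^-$, lower semicontinuous and nonnegative) delivers $\int f\, d\pi^* \geq \limsup_k \int f\, d\pi_{n_k} = \limsup_k I_{n_k}$. Combined with $I \leq I_{n_k}$ and $I(\pi^*) \leq I$, this forces $I(\pi^*) = I$ (thus providing the claimed primal optimizer) and $I_{n_k}\to I$; since $J \leq J_{n_k} = I_{n_k}$ and weak duality provides $I\leq J$, we conclude $I=J$. The main obstacle is coordinating the two limits — the monotone $c_m \uparrow c$ and the weak $\pi_{n_k} \rightharpoonup \pi^*$ — so as to transfer the cost bound to $\pi^*$; beyond this bookkeeping, everything reduces to routine applications of Portmanteau on a compact space.
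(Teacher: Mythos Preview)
Your proposal is correct and follows essentially the same route as the paper: approximate $c$ from below by the Lipschitz inf-convolutions $c_n$, apply Proposition~\ref{PROP-DUALITY-COMPACT-1} to obtain optimizers $\pi_n$ with $I_n=J_n$, extract a weak limit $\pi^*$ via compactness of $P(S\times S)$, verify feasibility by the monotone/weak-convergence bookkeeping you describe, and close the argument with $J\leq J_{n_k}=I_{n_k}$ and weak duality. The only cosmetic difference is that your Portmanteau step decomposes $f=f^+-f^-$, whereas the paper applies it directly to $f$ (since $f$ is upper semicontinuous and, being usc on a compact space, bounded above); the decomposition is harmless but unnecessary.
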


% \begin{proposition}
%   \label{PROP-DUALITY-COMPACT-3}
%   Let $S$ be a compact Polish space. If $f \in L^1(d\mu)$ and
%   $c: S\times S \rightarrow \mathbb{R}_+$ is lower semicontinuous,
%   then $I = J.$ 
% \end{proposition}

\noindent
As in the proof of the standard Kantorovich duality in compact spaces
in \citet{villani2003topics}, we use Fenchel duality theorem (see
Theorem 1 in Chapter 7, \citet{luenberger1997optimization}) to prove
Proposition \ref{PROP-DUALITY-COMPACT-1}. However, the similarity
stops there, owing to the reason that the set of primal feasible
measures $\Phi_{\mu,\delta}$ is not tight when $S$ is non-compact
(contrast this with Kantorovich duality, where the collection of
feasible measures $\Pi(\mu,\nu),$ the set of all joint distributions
between any two probability measures $\mu$ and $\nu,$ is tight).

\subsubsection*{Proof of Proposition \ref{PROP-DUALITY-COMPACT-1}.}
As a preparation for applying Fenchel duality theorem, we first let
$X = C_b(S \times S),$ and identify $X^\ast = M(S \times S)$ as its
topological dual. Here, the spaces $X$ and $X^\ast,$ representing the
vector space of bounded continuous functions and finite Borel measures
on $S \times S,$ respectively, are equipped with the supremum and
total variation norms. The fact that $X^\ast$ is the dual space of $X$
is a consequence of Riesz representation theorem (see, for example,
\citet{rudin1986real}). Next, let $C$ be the collection of all
functions $g$ in $X$ that are of the form
\begin{align*}
  g(x,y) = \phi(x) + \lambda c(x,y), \quad \text{ for all } x,y,
\end{align*}
for some $\phi \in C_b(S)$ and $\lambda \geq 0.$ In addition, let $D$
denote the collection of all functions $g$ in $X$ that satisfy
$g(x,y) \geq f(y),$ for every $x,y.$ Every $g$ in the convex subset
$C$ is defined by the pair $(\lambda, \phi),$ which, in turn, can be
uniquely identified by,
\begin{align*}
  \phi(x) = g(x,x) \quad  \text{ and } \quad  \lambda =
  \frac{g(x,y)-\phi(x)}{c(x,y)}, 
\end{align*}
for some $x,y$ in $S$ such that $c(x,y) \neq 0.$ Keeping this
invertible relationship in mind, define the functionals
$\Phi: C \rightarrow \mathbb{R}$ and
$\Gamma: D \rightarrow \mathbb{R}$ as below:
\begin{align*}
  \Phi(g) := \lambda \delta + \int \phi d\mu \quad \text{ and } \quad
  \Gamma(g) := 0. 
\end{align*}
Evidently, the functional $\Phi$ is convex, $\Gamma$ is concave, and
we are interested in
\begin{align}
  \inf_{g \in C \cap D} \big( \Phi(g) - \Gamma(g) \big) =
  \inf \big\{ J(\lambda, \phi): \lambda \geq 0, \ \phi
  \in C_b(S),\ \phi(x) + \lambda c(x,y) \geq f(y) \text{ for all } x,y
  \big\}.  
\label{RHS-Compact-Duality}
\end{align}
The next task is to identify the conjugate functionals
$\Phi^\ast, \Gamma^\ast$ and their respective domains $C^\ast,D^\ast$
as below:
\begin{align*}
  C^\ast = \left\{ \pi \in X^\ast: \sup_{g \in C} \left\{  \int g d\pi
  - \Phi(g)\right\} < \infty \right\} \text{ and } D^\ast = \left\{
  \pi \in X^\ast: \inf_{g \in D}  \int g d\pi > -\infty \right\}.
\end{align*}
The conjugate functionals $\Phi^\ast:C^\ast \rightarrow \mathbb{R}$
and $\Gamma^\ast: D^\ast \rightarrow \mathbb{R}$ are defined
accordingly as,
\begin{align*}
  \Phi^\ast(\pi) := \sup_{g \in C} \left\{  \int g d\pi
  - \Phi(g)\right\} \quad \text{ and } \quad \Gamma^\ast(\pi) := \inf_{g \in D}
  \int g d\pi. 
\end{align*}
First, to determine $C^\ast,$ we see that for every $\pi \in M(S
\times S),$
\begin{align*}
  \sup_{g \in C} \left\{  \int g d\pi - \Phi(g)\right\} &= 
  \sup_{(\lambda,\phi) \ \in \ \mathbb{R}_+ \times C_b(S)}
  \left\{ \int \big( \phi(x) + \lambda c(x,y) \big) d\pi(x,y) - \left( \lambda
  \delta + \int \phi(x) d\mu(x) \right)\right\}\\
&= \sup_{(\lambda,\phi) \ \in \ \mathbb{R}_+ \times C_b(S)}
  \left\{ \lambda \left( \int c(x,y) d\pi(x,y) - \delta
  \right)  + \int \phi(x) \big( d\pi(x,y) - d\mu(x) \big) \right\},\\\\
&=
  \begin{cases}
    0 \quad\quad\text{ if } \int cd\pi \leq \delta \text{ and } \pi(A
    \times S) = \mu(A) \text{ for all } A  \in \mathcal{B}(S),\\
    \infty \quad\quad\text{ otherwise}. 
  \end{cases}
\end{align*}
% which equals zero only when $\pi$ satisfies $\int cd\pi \leq \delta$ along
% with having $\mu$ as the marginal of its first component (that is,
% $\pi(A \times S) = \mu(A) \text{ for all } A \in
% \mathcal{B}(S)$).
Therefore,
\begin{align*}
  C^\ast = \left\{ \pi \in M(S \times S): \int cd\pi \leq \delta, \ \pi(A
  \times S) = \mu(A) \text{ for all } A \in \mathcal{B}(S)\right\} 
\quad \text{ and } \quad \Phi^\ast(\pi) = 0
\end{align*}
{Next, to identify $D^\ast,$ we show in Lemma
  \ref{Lemma-Ruling-Out-Negative-Measures} in Appendix
  \ref{Appendix-Proofs} that $\inf_{g \in D} \int gd\pi = -\infty$ if
  a measure $\pi \in M(S \times S)$ is not non-negative. On the other
  hand, if $\pi \in M(S \times S)$ is non-negative, then
\begin{align*}
  \inf \left\{ \int g(x,y) d\pi(x,y): g(x,y) \geq f(y) \text{ for all } x,y\right\} = \int f(y) d\pi(x,y).
\end{align*}
This is because, $f$ being an upper semicontinuous function that is
bounded from above, it can be approximated pointwise by a
monotonically decreasing sequence of continuous functions\footnote{Let
  $f:X \rightarrow \mathbb{R}$ be an upper semicontinuous function,
  that is bounded from above, defined on a Polish space $X.$ If
  $d(\cdot,\cdot)$ is a function that metrizes the Polish space $X,$
  then, for instance, the choice
  $f_n(x) = \sup_{y \in X}\{ f(y) - nd(x,y)\}$ is continuous, and
  satisfies $f_n \downarrow f$ pointwise}.  Then the above equality
follows as a consequence of monotone convergence theorem.} As a
result,
\begin{align*}
  D^\ast = \left\{\pi \in M_+(S \times S): \int f(y)d\pi(x,y) >
  -\infty \right\} \quad \text{ and } \quad \Gamma^\ast(\pi) = 
  \int f(y)d\pi(x,y). 
\end{align*}
Then, $\Gamma^\ast(\pi) - \Phi^\ast(\pi) = \int f(y)d\pi(x,y)$ on
$C^* \cap D^* = \{ \pi \in \cup_{\nu \in P(S)}\Pi(\mu,\nu): \int cd\pi
\leq \delta, \int f(y)d\pi(x,y) > -\infty \}.$ As $I$ is defined to
equal
$\sup\{ \int fd\nu: d_c(\mu,\nu) \leq \delta, \int fd\nu > -\infty\},$
it follows that
\begin{align}
  \sup \big\{ \Gamma^\ast(\pi) - \Phi^\ast(\pi): \pi \in C^\ast \cap
  D^\ast \big\} = I.
\label{LHS-Compact-Duality}
\end{align}
As the set $C \cap D$ contains points in the relative interiors of $C$
and $D$ (consider, for example, the candidate
$h(x,y) = c(x,y) + \sup_{x \in S} f(x)$ where
$\sup_{x \in S} f(x) < \infty$ because $f$ is upper semicontinuous and
$S$ is compact) and the epigraph of the function $\Gamma$ has
non-empty interior, it follows as a consequence of Fenchel's duality
theorem (see, for example, Theorem 1 in Chapter 7,
\citet{luenberger1997optimization}) that
\begin{align*}
  \inf_{g \in C \cap D} \big( \Phi(g) - \Gamma(g) \big) = \sup \big\{
  \Gamma^\ast(\pi) - \Phi^\ast(\pi): \pi \in C^\ast \cap 
  D^\ast \big\},
\end{align*}
where the supremum in the right is achieved by some
$\pi^\ast \in \Phi_{\mu,\delta}.$ In other words (see
\eqref{RHS-Compact-Duality} and \eqref{LHS-Compact-Duality}), 
\begin{align*}
  \inf \big\{ J(\lambda, \phi): \lambda \geq 0, \ \phi
  \in C_b(S),\ \phi(x) + \lambda c(x,y) \geq f(y) \text{ for all } x,y
  \big\} = \max \big\{ I(\pi) : \pi \in 
  \Phi_{\mu,\delta}\big\} =: I.
\end{align*}
As $C_b(S) \subseteq \Mu,$ it follows from the definition of $J$ that
\[J \leq \inf \big\{ J(\lambda, \phi): \lambda \geq 0, \ \phi \in
  C_b(S),\ \phi(x) + \lambda c(x,y) \geq f(y) \text{ for all } x,y
  \big\} = I.\] However, due to weak duality \eqref{EQ-WEAK-DUALITY},
we have $J \geq I.$ Therefore,
$J = I = \max\{ I(\pi): \pi \in \Phi_{\mu,\delta}\}.$ \hfill$\Box$
% \begin{align*}
%   I = \max \big\{ I(\pi) : \pi \in \Phi_{\mu,\delta}\big\} = \inf \big\{ J(\lambda, \phi): \lambda \geq 0, \ \phi
%   \in L^1(d\mu) \big\} = J,
% \end{align*}
% thus completing the proof of Proposition \ref{PROP-DUALITY-COMPACT-1}.

\subsubsection*{Proof of Proposition \ref{PROP-DUALITY-COMPACT-2}.}
First, observe that $c$ is a nonnegative lower semicontinuous function
defined on the Polish space $S \times S.$ Therefore, we can write
$c = \sup_n c_n,$ where $(c_n: n \geq 1)$ is a nondecreasing sequence
of continuous cost functions
$c_n: S \times S \rightarrow \mathbb{R}_+.$\footnote{For instance, one
  can choose
  $c_n(x,y) = \inf_{\tilde{x},\tilde{y} \in S} \left\{ c(\tilde{x},
    \tilde{y}) + n d((x,y),(\tilde{x},\tilde{y}))\right\},$ where the
  function $d$ metrizes the Polish space $S \times S.$ Then
  $c_n(\cdot,\cdot)$ is continuous, and satisfies $c_n(x,y) = 0$ if
  and only if $x = y.$} With $c_n$ as cost function, define the
corresponding primal and dual problems,
\[ I_n = \sup_{\pi \in \Phi^n_{\mu,\delta}} I(\pi) \quad\quad \text{
  and } \quad\quad J_n = \inf_{(\lambda,\phi) \in \Lambda_{c{_n},f}}
J(\lambda,\phi).\]
Here, the feasible sets $\Phi_{\mu,\delta}^n$ and $\Lambda_{c,f}^n$
are obtained by suitably modifying the cost function in sets
$\Phi_{\mu,\delta}$ and $\Lambda_{c,f}$ (defined in
\eqref{Primal-Feas-Set} and \eqref{Dual-Feas-Set}) as below:
\begin{align*}
  &\hspace{80pt}\Phi^n_{\mu,\delta} := \bigg\{\pi \in \bigcup_{\nu \in P(S)}
    \hspace{-4pt}\Pi(\mu, \nu):  
    \ \int  c_n d\pi \leq \delta \bigg\} \text{
    and }\\
  \Lambda_{c_n,f} &:= \big\{ (\lambda, \phi): \lambda \geq 0,\  \phi
                    \in \Mu, \ 
%                    \mathcal{M}_{\mathcal{U}}(S;\overline{\mathbb{R}}),
                    \phi(x) + \lambda c_n(x,y) \geq f(y) 
                    \text{ for all } x, y \in S \big\}.
\end{align*}
As the cost functions $c_n$ are continuous, due to Proposition
\ref{PROP-DUALITY-COMPACT-1}, there exists a sequence of measures
$(\pi_n:n \geq 1)$ such that
\begin{align}
I(\pi_n) = I_n = J_n.
\label{EQ-INTER-DUALITY-PROP2}
\end{align}
As $S$ is compact, the set $(\pi_n: n \geq 1)$ is automatically tight,
and due to Prokhorov's theorem, there exists a subsequence
$(\pi_{n_k}: k \geq 1)$ weakly converging to some
$\pi^* \in P(S \times S).$ First, we check that $\pi^*$ is feasible:
\begin{align*}
  \int cd\pi^* = \int \lim_n c_n d\pi^* = \lim_n \int c_n d\pi^* = \lim_n
  \lim_k \int c_n d\pi_{n_{_k}},
\end{align*}
where the second and third equalities, respectively, are consequences
of monotone convergence and weak convergence. In addition, since
$(c_n: n \geq 1)$ is a nondecreasing sequence of functions,
\[\lim_k \int c_n d\pi_{n_{_k}} \leq \varliminf_k \int
  c_{n_{_k}}d\pi_{n_{_k}} \leq \delta.\] Therefore,
$\int cd\pi^* \leq \delta.$ Again as a simple consequence of weak
convergence, we have $\int g(x)d\pi^\ast(x,y) = \lim_n \int
g(x)d\pi_{n_k}(x,y) = \int g(x)d\mu(x),$ for every $g \in C_b(S).$ 
% $\pi^*(A \times S) = \mu(A)$ for all
% Borel-measurable $A.$
Therefore, $\pi^* \in \Phi_{\mu_,\delta},$ and
is indeed feasible. Next, as $f$ is upper semicontinuous and $S$ is
compact, $f$ is bounded from above. Then, due to the weak convergence
$\pi_{n_k} \Rightarrow \pi^*$, the objective function $I(\pi^*)$
satisfies
\begin{align*}
  I(\pi^*) = \int fd\pi^* \geq \ \varlimsup_{k} \int f d\pi_{n_{k}}  =
  \ \varlimsup_{k}  I(\pi_{n_{_k}}) = \ \varlimsup_{k}  J_{n_{_k}},
\end{align*}
because $ I(\pi_{n_{_k}}) = J_{n_{_k}}$ as in
\eqref{EQ-INTER-DUALITY-PROP2}. Further, as $c_n \leq c,$ it follows
that $\Lambda_{c_{_n}, f}$ is a subset of $\Lambda_{c,f}$ and hence
$J_{n} \geq J,$ for all $n.$ As a result, we obtain,
\[ I(\pi^*) \geq \ \varlimsup_{k} J_{n_{_k}} \geq J.\]
Since $I$ never exceeds $J$ (see \eqref{EQ-WEAK-DUALITY}), it follows
that $I(\pi^*) = I = J.$ \hfill$\Box$

\subsection{A note on additional notations and measurability.}
\label{Sec-Notations-Measurability}
Given that we have established duality in compact spaces, the next
step is to establish the same when $S$ is not compact. For this
purpose, we need the following additional notation. For any
probability measure $\pi \in P(S \times S),$ let
\[S_\pi := \spt(\pi_{_X}) \cup \spt(\pi_{_Y}),\] where
$\spt(\pi_{_X})$ and $\spt(\pi_{_Y})$ denote the respective supports
of marginals $\pi_{_X}(\cdot) := \pi(\ \cdot \times S)$ and
$\pi_{_Y}(\cdot) := \pi(S \times \cdot\ ).$ As every probability
measure defined on a Polish space has $\sigma$-compact support, the
set $S_\pi \times S_\pi,$ which is a subset of $S \times S,$ is
$\sigma$-compact. As we shall see in the proof of Proposition
\ref{PROP-DUALITY-IND-MEAS}, $S_\pi \times S_\pi$ can be written as
the union of an increasing sequence of compact subsets
$(S_n \times S_n: n \geq 1).$ It will then be easy to make progress
towards strong duality in noncompact sets such as $S_\pi \times S_\pi$
by utilizing the strong duality results derived in Section
\ref{Sec-Duality-Compact} (which are applicable for compact sets
$S_n \times S_n$) via a sequential argument; this is accomplished in
Section \ref{Sec-Duality-Noncompact} after introducing additional
notation as follows. % that will enable us to define primal and dual
% problems on the spaces $S_n \times S_n$ and $S_\pi \times S_\pi$
{For any closed $K \subseteq S,$ let
\begin{align}
  \Lambda(K \times K) := \big\{ (\lambda,\phi): \lambda \geq 0, \
  \phi  \in \mathfrak{m}_{\mathcal{U}}(K;\overline{\mathbb{R}}), \ 
  \phi(x) + \lambda c(x,y) \geq f(y) \text{ for all } x,y \in K
  \big\}, 
\label{Notn-Lambda}
\end{align}
where $\mathfrak{m}_{\mathcal{U}}(K;\overline{\mathbb{R}})$ is used to
denote the collection of measurable functions
$\phi: (K, \mathcal{U}(K)) \rightarrow (\overline{\mathbb{R}},
\mathcal{B}(\overline{\mathbb{R}})),$ with $\mathcal{U}(K)$ denoting
the universal $\sigma-$algebra of the Polish space $K.$} With this
notation, the dual feasible set $\Lambda_{c,f}$ is nothing but
$\Lambda(S \times S).$ The next issue we address is the measurability
of functions of the form
$\phi_\lambda(x) = \sup_{y \in S} \big\{ f(y) - \lambda c(x,y)\big\}.$
For any $u \in \mathbb{R},$
\begin{align*}
  \big\{ \phi_\lambda(x) > u \big\} = \text{Proj}_1\left(\left\{
  (x,y) \in S \times S : f(y) - \lambda c(x,y) > u \right\}\right),
\end{align*}
which is analytic, because projections
$\text{Proj}_i(A) := \{ x_i \in S: (x_1,x_2) \in A\}, i = 1,2,$ of any
Borel measurable set $A$ are analytic (see, for example, Proposition
7.39 in \citet{bertsekas1978stochastic}). As analytic subsets of $S$
lie in $\mathcal{U}(S)$ (see Corollary 7.42.1 in
\citet{bertsekas1978stochastic}), the function
$\phi_\lambda(x): (S, \mathcal{U}(S)) \rightarrow
(\overline{\mathbb{R}}, \mathcal{B}(\overline{\mathbb{R}}))$ is
measurable (that is, universally measurable;
% and offers  an almost sure modification to be able to integrate with
% any Borel  probability measure $\mu$
refer Chapter 7 of \citet{bertsekas1978stochastic} for an introduction
to universal measurability and analytic sets).

\subsection{Extension of duality to non-compact spaces.}
\label{Sec-Duality-Noncompact}
% In particular,
% we attempt to expand to supports of every feasible probability
% measure.
Proposition \ref{PROP-DUALITY-IND-MEAS}, presented below in this
section, is an important step towards extending the strong duality
results proved in Section \ref{Sec-Duality-Compact} to non-compact
sets.
\begin{proposition}
  \label{PROP-DUALITY-IND-MEAS}
  Suppose that Assumptions (A1) and (A2) are in force. Let $\pi \in
  P(S \times S)$ be a probability measure satisfying\\
  (a) $\int c(x,y)d\pi(x,y) < \infty,$\\
  (b) $\int f(y) d\pi(x,y) \in (-\infty, \infty),$ and \\
  (c) $\pi(A \times S) = \mu(A)$ for every $A \in \borel(S).$ Then
  \[ \inf_{(\lambda,\phi) \in \Lambda(S_\pi \times S_\pi)}
  J(\lambda,\phi) \leq \  I \]
\end{proposition}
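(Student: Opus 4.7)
The plan is to reduce to the compact-case duality established in Proposition \ref{PROP-DUALITY-COMPACT-2} by exhausting the $\sigma$-compact space $S_\pi$ with an increasing sequence of compact subsets $S_n$, solving the dual on each $S_n$, and passing to the limit. The measure $\pi$ plays a role only through its support $S_\pi$ and its integrability conditions; assumption (a) together with $f \in L^1(d\mu)$ provides the integrability needed to keep the limits finite when $I < \infty$ (the case $I = \infty$ being trivial). First, I would write $S_\pi = \bigcup_n S_n$ as an increasing union of nonempty compacts (possible because $S_\pi$ is $\sigma$-compact) with $\mu(S_n) > 0$. On each compact $S_n$ I would set up the restricted problem with reference measure $\mu_n := \mu(\cdot \cap S_n)/\mu(S_n)$ and tolerance $\delta_n := \delta/\mu(S_n)$, and denote its primal value by $I_n$.

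A partial-transport lifting yields $\mu(S_n) I_n + \int_{S \setminus S_n} f\, d\mu \leq I$: given any coupling $\pi_n' \in \Pi(\mu_n, \nu_n)$ with $\int c\, d\pi_n' \leq \delta_n$, the measure $\tilde\pi(A \times B) := \mu(S_n) \pi_n'((A \cap S_n) \times (B \cap S_n)) + \mu(A \cap B \cap (S \setminus S_n))$ lies in $\Phi_{\mu,\delta}$ and has second marginal $\tilde\nu$ satisfying $\int f\, d\tilde\nu = \mu(S_n) \int f\, d\nu_n + \int_{S \setminus S_n} f\, d\mu \leq I$. Applying Proposition \ref{PROP-DUALITY-COMPACT-2} on $S_n$ then produces $\lambda_n^* \geq 0$ and $\phi_{\lambda_n^*}^{(n)}(x) := \sup_{y \in S_n}\{f(y) - \lambda_n^* c(x,y)\}$ realizing $I_n = \lambda_n^* \delta_n + \int \phi_{\lambda_n^*}^{(n)}\, d\mu_n$; multiplying by $\mu(S_n)$ gives the central inequality
\[ \lambda_n^* \delta + \int_{S_n} \phi_{\lambda_n^*}^{(n)}\, d\mu + \int_{S \setminus S_n} f\, d\mu \ \leq\  I. \]
Since $\phi_{\lambda_n^*}^{(n)}(x) \geq f(x)$ for $x \in S_n$ (take $y = x$ in the supremum), the above reduces to $\lambda_n^* \delta + \int f\, d\mu \leq I$, so $(\lambda_n^*)$ is bounded; I would then extract a subsequential limit $\lambda^* \in [0,\infty)$.

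The final step is to set $\phi^*(x) := \sup_{y \in S_\pi}\{f(y) - \lambda^* c(x,y)\}$, which lies in $\mathfrak{m}_{\mathcal{U}}(S_\pi; \overline{\mathbb{R}})$ by the projection argument of Section \ref{Sec-Notations-Measurability}, so that $(\lambda^*, \phi^*) \in \Lambda(S_\pi \times S_\pi)$, and to verify $J(\lambda^*, \phi^*) \leq I$ via a two-stage limit. For fixed $m$ and $n \geq m$, the decomposition
\[ \int_{S_n} \phi_{\lambda_n^*}^{(n)}\, d\mu \ \geq\  \int_{S_m} \sup_{y \in S_m}\bigl\{f(y) - \lambda_n^* c(x,y)\bigr\}\, d\mu(x) + \int_{S_n \setminus S_m} f\, d\mu \]
(monotonicity of the supremum in its domain, plus $y = x$ on the complement) combined with the central inequality yields $\lambda_n^* \delta + \int_{S_m} \sup_{y \in S_m}\{f(y) - \lambda_n^* c(x,y)\}\, d\mu + \int_{S \setminus S_m} f\, d\mu \leq I$. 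Sending $n \to \infty$ at fixed $m$ uses dominated convergence on the compact $S_m$ (where the integrand is sandwiched between $f(x)$ and the finite constant $\sup_{S_m} f$), and then $m \to \infty$ uses monotone convergence applied to the nonnegative increasing functions $h_m(x) := (\sup_{y \in S_m}\{f(y) - \lambda^* c(x,y)\} - f(x))\mathbf{1}_{S_m}(x)$ together with $\int_{S \setminus S_m} f\, d\mu \to 0$. The hardest part will be executing this double-limit interchange cleanly—both $\lambda_n^*$ and the supremum domain $S_n$ vary with $n$—and the ordering (first $n$, then $m$), combined with local boundedness on each compact $S_m$ for dominated convergence and global $L^1(d\mu)$-integrability of $f$ for the outer monotone convergence, is precisely what makes the scheme succeed.
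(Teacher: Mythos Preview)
Your argument is correct and takes a genuinely different (and in some respects cleaner) route than the paper's own proof. Both proofs exhaust $S_\pi$ by an increasing sequence of compacts $S_n$ and invoke Proposition~\ref{PROP-DUALITY-COMPACT-2} on each $S_n$, but the constructions diverge from there. The paper defines its restricted reference measure $\mu_n$ by \emph{conditioning $\pi$} on $S_n\times S_n$ and projecting to the first coordinate, then lifts the restricted primal optimizer back to $\Phi_{\mu,\delta}$ by stitching with the $\pi$-residual on $(S_n\times S_n)^c$; this is precisely why the paper needs hypotheses (a) and (b), to control $\int_{(S_n\times S_n)^c} c\,d\pi$ and $\int_{(S_n\times S_n)^c} |f|\,d\pi$. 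Your construction instead normalises $\mu$ itself on $S_n$ and lifts via the identity coupling on $S\setminus S_n$, so feasibility of $\tilde\pi$ is automatic and the only integrability you need is $f\in L^1(d\mu)$---assumptions (a) and (b) are never used, so you have in fact proved a slightly stronger statement depending on $\pi$ only through its support $S_\pi$ and the marginal condition (c). The paper passes to the limit with a single Fatou argument backed by the technical Lemma~\ref{Lem-Lim-Int-Interchange}; your two-stage limit (dominated convergence on the fixed compact $S_m$ to handle $\lambda_n^\ast\to\lambda^\ast$, then monotone convergence in $m$ after subtracting $f$) is more transparent, though it relies on the pointwise continuity of $\lambda\mapsto\sup_{y\in S_m}\{f(y)-\lambda c(x,y)\}$, which you should justify explicitly (it is a finite convex function of $\lambda$ on $[0,\infty)$, hence continuous).

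One minor point: Proposition~\ref{PROP-DUALITY-COMPACT-2} as stated delivers only strong duality $I_n=J_n$ and a \emph{primal} optimizer, not a dual optimizer $\lambda_n^\ast$. You can fill this in one line---since $I_n\le \sup_{S_n} f<\infty$, the map $\lambda\mapsto \lambda\delta_n+\int\phi_\lambda^{(n)}\,d\mu_n$ is lower semicontinuous with compact sublevel sets, hence attains its infimum---or alternatively work with $\varepsilon$-optimal dual pairs throughout, as the paper does.
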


\subsubsection*{Proof of Proposition \ref{PROP-DUALITY-IND-MEAS}.} 
As any Borel probability measure on a Polish space is concentrated on
a $\sigma$-compact set (see Theorem 1.3 of \citet{MR1700749}), the
sets $\spt(\pi_{_X})$ and $\spt(\pi_{_Y})$ are $\sigma$-compact, and
therefore $S_{\pi} \times S_{\pi}$ is $\sigma$-compact as well. Then,
by the definition of $\sigma$-compactness, the set
$S_{\pi} \times S_\pi$ can be written as the union of an increasing
sequence of compact subsets $(C_n: n \geq 1)$ of
$S_{\pi} \times S_{\pi}.$ If we let $S_n$ to be the union of the
projections of $C_n$ over its two coordinates, then it follows that
$(S_n \times S_n: n \geq 1)$ is an increasing sequence of subsets of
$S_\pi \times S_\pi$ satisfying
$\spt(\pi) \subseteq S_\pi \times S_\pi = \cup_{n \geq 1} (S_n \times
S_n).$ As $\int |f(y)|d\pi(x,y)$ and $\int c(x,y)d\pi(x,y)$ are
finite, % without loss of generality,
one can pick the increasing
sequence $(S_n \times S_n: n \geq 1)$ with
$S_\pi \times S_\pi = \cup_{n \geq 1} (S_n \times S_n)$ to be
satisfying, 
\begin{subequations}
\begin{align}
  p_n := \pi (S_n \times S_n) \ \geq 1&-\frac{1}{n}, \label{Sn-Choice-1}\\  \int
  c(x,y)\mathbf{1}_{(S_n \times S_n)^c} d \pi(x,y) \ &\leq \
                                                    \frac{\delta}{n} \ \  \text{ and } \label{Sn-Choice-2}\\
  \int |f|(y)\mathbf{1}_{(S_n \times S_n)^c}d\pi(x,y) &\leq
                                                      \frac{1}{n}, \label{Sn-Choice-3}
\end{align}
\end{subequations}
where $(S_n \times S_n)^c := (S \times S) \setminus (S_n \times S_n),$
and $\delta \in (0,\infty)$ is introduced while defining the primal
feasibility set $\Phi_{\mu,\delta}$ (recall that
$I := \sup\{ I(\pi): \pi \in \Phi_{\mu,\delta}\}$) and the dual
objective $J(\lambda, \phi) = \lambda \delta + \int \phi d\mu$ in
Section \ref{Sec-primal-prob}.
% Choosing such subsets is always possible because the support of $$
% 1) Support of measure $\pi$ is $\sigma$-compact as discussed above,\\
% 2) $\int cd\pi < \infty,$ and\\
% 3) Integrability of $f$ is equivalent to that of $|f|.$\\
% Here $\delta > 0$ is the number that comes in the constraint of the
% primal problem.
% $-\infty < \int f(x) d\mu(x) + \epsilon \leq \int (f_+(y) -
% f_-(y))d\pi_\epsilon)(x,y) < I < \infty.$
% As a result, $f_+, f_-$ and hence $|f|,$ all must integrate to a
% finite value. Here, we have followed the usual notion that
% $f_+ = \max(f,0)$ and $f_- =
% \max(-f,0).$  \\\\
\noindent Having chosen the compact subsets $(S_n: n \geq
1),$ define a joint probability measure $\pi_{n} \in P(S_n \times
S_n)$ and its corresponding marginal $\mu_n \in P(S_n)$ as below:
\[ \pi_{n} (\cdot) = \frac{\pi \big(\ \cdot \ \cap \ (S_n \times S_n)
    \big) }{p_n} \quad \text{ and } \quad \mu_n(\cdot) = \pi_{n}(\
  \cdot \times S_n),\] for every $n \geq
1.$ Additionally, let $\delta_n := \delta(1-1/n).$ For every $n \geq
1,$ these new definitions can be used to define ``restricted'' primal
and dual optimization problems $I_n$ and
$J_n$ supported on the compact set $S_n \times S_n:$
\begin{align}
  I_n := \sup &\bigg\{ \int f(y) \gamma(dx,dy) : \gamma \in \hspace{-4pt}
                \bigcup_{\nu \in P(S_n)} \hspace{-8pt}\Pi(\mu_n,\nu), \int
                c(x,y)d\gamma(x,y) \leq \delta_n 
  % \Phi^n_{\mu_n,\delta_n}
        \bigg\} \text{ and }  \label{Res-Primal}\\
 & J_n := \inf \left\{ \lambda \delta_n + \int \phi d\mu_n:  \
  (\lambda, \phi)   \in \Lambda (S_n \times S_n)  \right\}.
  \label{Res-Dual}
\end{align}
To comprehend \eqref{Res-Dual}, refer the definition of
$\Lambda(\cdot)$ in \eqref{Notn-Lambda}.
%where $\delta_n := \delta \times (1-n^{-1})/p_n$. % The definitions of
% $\Phi^n_{\mu_n,\delta_n},$ $\Lambda^n_{c,f}$ follow in an obvious
% manner that restricts the original definitions to the smaller space
% $(S_n \times S_n):$
% \begin{align*}
%   &\Phi^n_{\mu_n,\delta_n} := \left\{ \pi_n \in \Pi(\mu_n, \nu): \ \nu
%   \in P(S_n), \ \int cd\pi_n \leq \delta_n \right\} \text{ and }\\
%   \Lambda^n_{c,f} := &\big \{(\lambda, \phi): \  \lambda \geq 0, \  
%   \phi \in L^1(d\mu_n), \ \phi(x) + \lambda c(x,y) \geq f(y)
%   \text{ for all } x,y \in S_n \big\}.
% \end{align*}
As $S_n$ is compact, due to the duality result in Proposition
\ref{PROP-DUALITY-COMPACT-2}, we know that there exists a
$\gamma_n^* \in P(S_n \times S_n)$ that is feasible for optimization of
$I_n$ and satisfies
\begin{align}
  \label{INTER-DUALITY-THM1}
  \int f(y) \gamma_n^*(dx,dy) = I_n = J_n.
\end{align}
Using this optimal measure $\gamma_n^*,$ one can, in turn, construct a
measure $\tilde{\pi} \in P(S \times S)$ by stitching it together with
the residual portion of $\pi$ as below:
\begin{align*}
  \tilde{\pi}(\cdot) = p_n \gamma^*_n \big(\ \cdot \ \cap\  (S_n
  \times S_n) \big) +
  \pi \big( \ \cdot \ \cap \ (S_n \times S_n)^c\big). 
\end{align*}
{Having tailored a candidate measure $\tilde{\pi} \in
  P(S \times S),$ next we check its feasibility that $\tilde{\pi} \in
  \Phi_{\mu,\delta}$ as follows: Since $\gamma^*_n \in
  \Pi(\mu_n,\nu)$ for some $\nu \in
  P(S_n),$ it is easy to check, as below, that $\tilde{\pi}$ has
  $\mu$ as its marginal for the first component: first, it follows
  from the definition of $\tilde{\pi}$ that
\begin{align*}
  \tilde{\pi}(\ \cdot \times S) 
  &= p_n \gamma_n^{\ast} ((\ \cdot \ \cap  S_n)\times S_n)  +
    \pi ( (\ \cdot \times S) \ \cap \ (S_n \times S_n)^c)\\
  &=  p_n \mu_n(\ \cdot \ \cap S_n) + \pi ( (\ \cdot \times S) \
    \cap \ (S_n  \times S_n)^c). 
\end{align*}
As $\mu_n(\cdot) = \pi(\ \cdot \times S_n )/p_n,$ it follows that $p_n
\mu_n(\ \cdot \ \cap S_n)  = \pi((\ \cdot \times S) \cap (S_n \times
S_n )),$ and consequently,  
\begin{align*}
  \tilde{\pi}(\ \cdot \times S) = \pi((\ \cdot \times S) \cap (S_n  \times S_n )) + \pi ( (\
  \cdot \times S) \ \cap \ (S_n  \times S_n)^c)
  = \pi(\ \cdot \times S) = \mu(\cdot).
\end{align*}
Therefore, $\tilde{\pi} \in \Pi(\mu,\nu)$ for some $\nu \in P(S).$}
Further, 
% \begin{align*}
%   &= \pi((\ \cdot \ \cap S_n) \times S_n ) + \pi ( (\ \cdot \times S) \
%     \cap \ (S_n  \times S_n)^c)\\
%   &= \pi((\ \cdot \times S) \cap (S_n \times \times S_n ) + \pi ( (\
%     \cdot \times S) \ \cap \ (S_n  \times S_n)^c)\\ 
%   &= \pi(\ \cdot \times S) = \mu(\cdot). 
% \end{align*}
as $\gamma_n^*$ is a feasible solution for the optimization problem in
\eqref{Res-Primal}, 
\begin{align*}
  \int cd \tilde{\pi} = p_n \int_{_{S_n \times S_n}} \hspace{-20pt} c
                        d\gamma_n^* + \int_{_{(S_n \times S_n)^c}} \hspace{-22pt} c d\pi
                      \ \leq \  \delta_n + \frac{\delta}{n},
\end{align*}
which does not exceed $\delta.$ To see this, refer \eqref{Sn-Choice-2}
and recall that $\delta_n := \delta (1-1/n).$ Therefore, the stitched
measure $\tilde{\pi}$ is primal feasible, that is
$\tilde{\pi} \in \Phi_{\mu,\delta}.$
% \[\int c d\tilde{\pi} \leq p_n \int c d\gamma_n^* + 
% \frac{\delta}{n}\] 
Consequently,
\begin{align*}
  I \geq I(\tilde{\pi}) = \int f(y) d\tilde{\pi}(x,y) = p_n\int f(y)d\gamma_n^*(x,y) +
  \int f(y)\mathbf{1}_{(S_n \times S_n)^c}   d\pi(x,y).
\end{align*}
As $S_n$ is chosen to satisfy \eqref{Sn-Choice-3}, we have
$I \geq p_n I_n - n^{-1}.$ Therefore, it is immediate from
\eqref{INTER-DUALITY-THM1} that
\begin{align}
  \label{EQ-INTER-DUAL-BND}
    J_n \leq \left( 1 - \frac{1}{n}\right)^{-1} \left( I + \frac{1}{n}\right), 
\end{align}
which, if $I < \infty,$ is a useful upper bound for all of
$J_n, n \geq 1.$ See that the statement of Proposition
\ref{PROP-DUALITY-IND-MEAS} already holds when $I$ equals $\infty.$
Therefore, let us take $I$ to be finite. Next, given $n \geq 1$ and
$\epsilon > 0,$ take an $\epsilon$-optimal solution
$(\lambda_{n},\phi_n)$ for $J_n:$ that is,
$(\lambda_{n},\phi_n) \in \Lambda(S_n \times S_n)$ and
\[ \lambda_n \delta_n + \int \phi_n d\mu_n \leq J_n + \epsilon.\]
For any pair $(\lambda_n, \phi)$ that belongs to
$\Lambda(S_n \times S_n ),$ we have from the definition of
$\Lambda(S_n \times S_n )$ that $\phi (x)$ is larger than
$\sup_{z \in S_n} \{ f(z) - \lambda_n c(x,z) \},$ for every
$x \in S_n.$  
\begin{align*}
  \lambda_n \delta_n + \int \sup_{z \in S_n}
  \big\{  f(z) - \lambda_n   c(x,z) \big\} d\mu_n(x) \leq J_n + \epsilon,
\end{align*}
for every $n.$
{As $\mu_n(\cdot) = \pi(\cdot \times S_n)/p_n,$ combining the above
expression with \eqref{EQ-INTER-DUAL-BND} results in 
\begin{align}
  \label{EQ-DUAL-BND}
  \varlimsup_{n \rightarrow \infty} \left(\lambda_n \delta_n + \int
  \sup_{z \in S_n} 
  \big\{  f(z) - \lambda_n   c(x,z) \big\} \frac{\mathbf{1}_{S_n}(x)
  \mathbf{1}_{S_n}(y)}{p_n} d\pi(x,y) \right) \leq I +   \epsilon. 
\end{align}
Since $c(x,x) = 0,$ the integrand admits
$f(x)$ as a lower bound on $S_n \times S_n;$ further, as $f \in
L^1(d\mu),$
$-f^-(x)$ serves as a common integrable lower bound for all $n \geq
1.$} This has two consequences: First, because of the finite upper
bound in \eqref{EQ-DUAL-BND}, $\varliminf_n
\lambda_n$ and $\varlimsup_n
\lambda_n$ are finite (recall that $\lambda_n \geq
0$ as well), and there exists a subsequence $(\lambda_{n_k}: k \geq
1)$ such that $\lambda_{n_k} \rightarrow \lambda^\ast,$ as $k
\rightarrow \infty,$ for some $\lambda^\ast \in
[0,\infty).$ The second consequence of the existence of a common
integrable lower bound is that we can apply Fatou's lemma in
\eqref{EQ-DUAL-BND} along the subsequence $({n_k}: k \geq
1)$ to obtain
\begin{align*}
  I + \epsilon \ \geq \ \varliminf_{k \rightarrow \infty} 
  &\left(\lambda_{n_k} \delta_{n_k} + \int 
    \sup_{z \in S_{n_k}} \big\{  f(z) - \lambda_n   c(x,z) \big\}
    \frac{\mathbf{1}_{S_{n_k}}(x)   \mathbf{1}_{S_{n_k}}(y)}{p_{n_k}}
    d\pi(x,y) \right)\\
  &\quad\quad\quad\quad\quad\quad \geq \lambda^\ast \delta +
    \int_{S_\pi \times S_\pi}
    \sup_{z \in \cup_k  S_{n_k}} \big\{ f(z) - \lambda^* 
    c(x,z)\big\} d\pi(x,y)\\
    &\quad\quad\quad\quad\quad\quad = \lambda^\ast \delta +
    \int_S  \ \sup_{z \in S_\pi} \big\{ f(z) - \lambda^* 
    c(x,z)\big\} d\mu(x).
\end{align*}
% This has two consequences:\\
% 1) because of the finite upper bound in \eqref{EQ-DUAL-BND},
% $\lambda^\ast := \varliminf_n
% \lambda_n$ is finite (recall that $\lambda_n \geq 0$ as
% well), % the liminf of
% % the collection $\{\lambda_n: n \geq
% % 1\}$ of non-negative real numbers is finite, admits a cluster point
% % due to the presence of bounded subsequences,
% and\\ 2) one can apply Fatou's
% lemma % (see, for example, Theorem 2 of Chapter II.6
% % in \citet{MR1368405})
% to take the liminf inside the integral, thus
% yielding
% \begin{align*}
%   {\lambda^*\delta + \int \sup_{z \in \cup_n S_n} \big\{ f(z) - \lambda^*
%   c(x,z)\big\} d\pi(x,y) =}  \lambda^*\delta + \int \sup_{z \in \cup_n
%   S_n} \big\{ f(z) - \lambda^* c(x,z)\big\} d\mu(x) \leq I +
%   \epsilon. 
% \end{align*}
Here, we have used that $\delta_n \rightarrow \delta,$ $p_n
\rightarrow 1$ as $n \rightarrow \infty,$ $\cup_{n \geq 1}S_n =
\cup_{k \geq 1}S_{n_k} = S_\pi,$ and the fact that
$\pi$ is supported on a subset of $S_\pi \times
S_\pi.$ The fact that $\varliminf_k \sup_{z \in S_{n_k}} \{ f(z) -
\lambda_{n_k} c(x,z)\}$ is at least $\sup_{z \in \cup_k S_{n_k}}\{
f(z) - \lambda^\ast c(x,z)\},$ for every $x \in
S,$ is carefully checked in Lemma \ref{Lem-Lim-Int-Interchange},
Appendix \ref{Appendix-Proofs}. % In addition, as $(S_n: n \geq
% 1)$ is an increasing sequence, we have $S_\pi := \cup_n S_n = \cup_k
% S_{n_k}.$
If we let $\phi^*(x) := \sup_{y \in S_\pi} \big\{ f(y) -
\lambda^* c(x,y)\big\},$ then $(\lambda^*, \phi^*) \in \Lambda(S_\pi
\times S_\pi),$ and as $\epsilon >
0$ is arbitrary, it follows from the above inequality that
$J(\lambda^*,\phi^*) \leq I.$ This completes the proof. \hfill$\Box$

\subsubsection*{\textbf{Proof of Theorem  \ref{THM-STRONG-DUALITY}(a)}.}
If $I = \infty,$ then as $I$ never exceeds $J,$ $J$ equals $\infty$ as
well, and there is nothing to prove. Next, consider the case where $I$
is finite: that is, $I \in [\int fd\mu, \infty).$ Let $E$ denote the
convex set of probability measures $\pi \in P(S \times S)$ that
satisfy conditions (a)-(c) of Proposition \ref{PROP-DUALITY-IND-MEAS}.
Then, due to Proposition \ref{PROP-DUALITY-IND-MEAS}, for every
$\pi \in E,$
\begin{align}
  I  \geq \inf_{(\lambda,\phi) \in \Lambda (S_\pi \times S_\pi )} \left\{\lambda \delta
       + \int \phi(x) d\mu(x) \right\} \geq \ \inf_{\lambda \geq 0} \left\{ \lambda \delta  +
       \int \sup_{y \in S_\pi} \big\{ f(y)
       -  \lambda c(x,y) \big\} d\mu(x) \right\}. \nonumber
\end{align}
For any $\pi \in E$ and $\lambda \geq 0,$
define
\begin{align*}
  T(\lambda,\pi) :=
  \lambda \delta + \int \sup_{y \in S_\pi} \big\{
  f(y) - \lambda c(x,y)\big\} d\mu(x).  
\end{align*}
As $c(x,x) = 0,$ it is easy to see that
$T(\lambda,\pi) \geq \lambda \delta + \int fd\mu.$ % Subsequently,
% $T(\lambda,\pi)$ increases to infinity as
% $\lambda \rightarrow \infty,$ uniformly in $\pi.$
Since $T(\lambda,\pi) > I$ for every
$\lambda > \lambda_{\max}:= (I - \int fd\mu)/\delta,$ one can rather
restrict attention to the compact subset $[0,\lambda_{\text{max}}],$
as follows:
% $\lambda$ is taken larger than
% \[ \lambda_{\text{max}} := \frac{1}{\delta}\left( I - \int fd\mu \right),\]
% then $T(\lambda,\pi) \geq I.$ Therefore, for any $\pi \in E,$ due to
% Proposition \ref{PROP-DUALITY-IND-MEAS}, we have
\begin{align}
  I  &\geq \inf_{\lambda \in [0,\lambda_{\text{max}}]} T(\lambda,\pi). 
      \label{EQ-THM1-INTER1}
\end{align}
Being a pointwise supremum of a family of affine functions,
$\sup_{y \in S_\pi} \{ f(y) - \lambda c(x,y) \}$ is a
lower semicontinuous with respect to the variable $\lambda.$ Further,
recall that $T(\lambda, \pi) \geq \lambda \delta + \int fd\mu.$
Thus, for any $\lambda_n \rightarrow \lambda,$ due to Fatou's
lemma,
\begin{align*}
  \varliminf_{n \rightarrow \infty} T(\lambda_n ,\pi) &\geq \lambda
                                                        \delta + \int \varliminf_n \sup_{y \in S_\pi} \left\{ f(y) - \lambda_n
                                                        c(x,y)\right\} d\mu(x)\\
                                                      &\geq \lambda \delta + \int \sup_{y \in S_\pi} \left\{ f(y) - \lambda
                                                        c(x,y)\right\}
                                                        d\mu(x) =
                                                        T(\lambda,\pi),
\end{align*}
which means that $T(\lambda,\pi)$ is lower semicontinuous in
$\lambda.$ Along with this lower semicontinuity, for every fixed
$\pi,$ $T(\lambda,\pi)$ is also convex in $\lambda.$ In addition, for
any $\alpha \in (0,1)$ and $\pi_1, \pi_2 \in E,$
\begin{align*}
  T(\lambda, \alpha \pi_1 + (1-\alpha) \pi_2) &= \lambda \delta + \int \sup_{y
                                                \in S_{\alpha \pi_1 + (1-\alpha) \pi_2}} \big\{ f(y)
                                                - \lambda c(x,y)\big\}
                                                d\mu(x)\\ 
                                              &\geq \max_{i = 1,2}
                                                \left\{\lambda \delta
                                                + \int \sup_{y \in S_{\pi_i}} \big\{ f(y) - \lambda
                                                c(x,y)\big\} d\mu(x)
                                                \right\} \geq \ \alpha
                                                T(\lambda, \pi_1) +
                                                (1-\alpha) T(\lambda,
                                                \pi_2).  
\end{align*}
Therefore, $T(\lambda, \pi)$ is a concave function in $\pi$ for every
fixed $\lambda.$ One can apply a standard minimax theorem (see,
for example, \citet{sion1958}) to conclude 
\begin{align*}
  \sup_{\pi \in E} \inf_{\lambda \in [0,\lambda_{\text{max}}]}
  T(\lambda, \pi) = \inf_{\lambda \in [0,\lambda_{\text{max}}]}
  \sup_{\pi \in E} T(\lambda,\pi).
\end{align*}
This observation, in conjunction with \eqref{EQ-THM1-INTER1}, yields
\begin{align*}
  I \geq \sup_{\pi \in E} \inf_{\lambda \in [0,\lambda_{\text{max}}]}
  T(\lambda,\pi)
  = \inf_{\lambda \in [0,\lambda_{\text{max}}]} \left\{ \lambda
    \delta + \sup_{\pi \in E} \int \sup_{y
                    \in S_\pi} \big\{ f(y) - \lambda
    c(x,y)\big\} d\mu(x) \right\}
\end{align*}
Lemma \ref{LEM-SUPP-EQUIV} below is the last piece of technicality
that completes the proof of Part (a) of Theorem
\ref{THM-STRONG-DUALITY}.
%\vspace{-10pt}
\begin{lemma}
  Suppose that Assumptions (A1) and (A2) are in force. Then, for any
  $\lambda \geq 0,$
\begin{align*}
  \sup_{\pi \in E} \int \sup_{y
  \in S_\pi} \big\{ f(y) - \lambda
  c(x,y)\big\} d\mu(x)  = \int \sup_{y \in S} \big\{ f(y) - \lambda
  c(x,y)\big\} d\mu(x).
\end{align*}
\label{LEM-SUPP-EQUIV}  
\end{lemma}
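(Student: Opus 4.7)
The inequality $\sup_{\pi \in E} \int \sup_{y \in S_\pi} \{f(y) - \lambda c(x,y)\}\, d\mu \leq \int \phi_\lambda(x)\, d\mu$, with $\phi_\lambda(x) := \sup_{y \in S}\{f(y) - \lambda c(x,y)\}$, is immediate since $S_\pi \subseteq S$ for every $\pi \in E$. The strategy for the reverse inequality is to construct, for every $\epsilon > 0$ and truncation levels $M, N > 0$, a probability measure $\pi_{M, N} \in E$ whose $y$-marginal support captures $\epsilon$-approximate maximizers of $y \mapsto f(y) - \lambda c(x,y)$ on a set of nearly full $\mu$-measure, and then to pass to the limit in $N, M, \epsilon$.

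The construction proceeds in three moves. First, since $(x,y) \mapsto f(y) - \lambda c(x,y)$ is upper semicontinuous and $\phi_\lambda$ is universally measurable (as established in Section \ref{Sec-Notations-Measurability}), the Jankov--von Neumann theorem (Proposition 7.50 of \cite{bertsekas1978stochastic}, already invoked in the excerpt) yields a universally measurable selector $\xi = \xi_M: S \to S$ with $f(\xi(x)) - \lambda c(x, \xi(x)) \geq (\phi_\lambda(x) \wedge M) - \epsilon$ for every $x \in S$. Second, to guarantee that the resulting push-forward satisfies the integrability constraints (a), (b) defining $E$, I truncate the selector outside $A_{M, N} := \{x : c(x, \xi(x)) \leq N,\ \phi_\lambda(x) \leq M\}$ by setting $\xi_{M, N}(x) := \xi(x)\mathbf{1}_{A_{M, N}}(x) + x\mathbf{1}_{A_{M, N}^c}(x)$ and $\pi_{M, N} := (\mathrm{id} \times \xi_{M, N})_{\#}\mu$. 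Then $\pi_{M, N}$ has first marginal $\mu$ (condition (c)); $\int c\, d\pi_{M, N} \leq N$ (condition (a)); and the algebraic identity $f(\xi(x)) \leq \phi_\lambda(x) + \lambda c(x, \xi(x))$, valid by the very definition of $\phi_\lambda$, combined with the cutoffs $\phi_\lambda \leq M$ and $c(x,\xi(x)) \leq N$ on $A_{M,N}$, yields $f^+(\xi_{M, N}(x)) \leq (M + \lambda N)\mathbf{1}_{A_{M,N}}(x) + f^+(x)$; the negative part is bounded via $f(\xi(x)) \geq \phi_\lambda(x) \wedge M - \epsilon \geq f(x) - \epsilon$, giving $f^-(\xi_{M,N}(x)) \leq f^-(x) + \epsilon$. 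Since $f \in L^1(\mu)$, condition (b) follows and $\pi_{M, N} \in E$.

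Third, the standard fact that $\xi_{M, N}(x) \in \mathrm{spt}((\xi_{M, N})_{\#}\mu)$ for $\mu$-a.e.\ $x$ (because $\mu(\xi_{M,N}^{-1}(\mathrm{spt}((\xi_{M,N})_{\#}\mu))) = 1$) places $\xi_{M,N}(x)$ inside $S_{\pi_{M, N}}$ almost surely, so
\[
\sup_{y \in S_{\pi_{M, N}}} \{f(y) - \lambda c(x,y)\} \geq f(\xi_{M, N}(x)) - \lambda c(x, \xi_{M, N}(x)),
\]
which is $\geq (\phi_\lambda(x) \wedge M) - \epsilon$ on $A_{M, N}$ and $\geq f(x)$ on $A_{M, N}^c$. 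Letting $N \to \infty$ (so $\mu(A_{M, N}) \to \mu(\phi_\lambda \leq M)$, since $c(x,\xi(x))$ is real-valued by Assumption (A1)), then $M \to \infty$, and finally $\epsilon \downarrow 0$, a combination of dominated convergence (anchored by the integrable lower bound $-f^-$) and monotone convergence shows that the integral on the left converges to $\int \phi_\lambda\, d\mu$, including the case $+\infty$. The main technical obstacle lies precisely in this bookkeeping: the parameters $M$ and $N$ must be coordinated so that $\pi_{M, N}$ stays in $E$ while the support $S_{\pi_{M, N}}$ grows rich enough for the integrand to recover $\phi_\lambda$ in the limit, with the algebraic inequality $f(\xi(x)) \leq \phi_\lambda(x) + \lambda c(x, \xi(x))$ serving as the lever that reconciles the two demands.
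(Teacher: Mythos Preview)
Your argument is correct and follows essentially the same route as the paper: build measures in $E$ by pushing $\mu$ forward along the graph of a truncated near-optimal selector obtained from Jankov--von Neumann, observe that the selector values lie in $S_\pi$ $\mu$-almost surely, and then pass to the limit using Fatou with the integrable lower bound $f$. The only organizational difference is that the paper discretizes the range of $g(x,y)=f(y)-\lambda c(x,y)$ into strips $A_{k,n}=\{(k-1)/n\le g\le k/n\}$ for $k\le n^2$ and selects on each strip, so that the composite selector $\Gamma_n$ automatically satisfies $g(x,\Gamma_n(x))\le f(x)\vee n$ and a single parameter $n$ controls both approximation and integrability; you instead invoke the $\epsilon$-approximate selector form of Proposition~7.50 once and impose integrability afterward through the cutoff set $A_{M,N}$, which is conceptually cleaner at the cost of juggling three parameters $(\epsilon,M,N)$ in the limit.
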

%\vspace{-10pt}
\noindent If Lemma \ref{LEM-SUPP-EQUIV} holds, then it is automatic that
\begin{align*}
  I   &\geq \inf_{\lambda \in [0,\lambda_{\text{max}}]} \left\{ \lambda
    \delta + \int \sup_{y \in S} \big\{ f(y) - \lambda
    c(x,y)\big\} d\mu(x) \right\} \geq J,
\end{align*}
which will complete the proof of Part (a) of Theorem
\ref{THM-STRONG-DUALITY}. 

\begin{proof}
  \noindent \textbf{Proof of Lemma \ref{LEM-SUPP-EQUIV}.}  For
  brevity, let $g(x,y) :=f(y) - \lambda c(x,y)$ and
  $\phi_\lambda(x) := \sup_{y \in S} g(x,y).$ For any $n \geq 1$ and
  $k \leq n^2,$ define the sets,
  \begin{align*}
    A_{k,n} := \left\{ (x,y) \in S \times S: (k-1)/n \leq g(x,y) \leq
    k/n \right\} \ \  \text{ and }  \ \ 
    B_{k,n} := \text{Proj}_1\left( A_{k,n} \right) \setminus \cup_{j
              > k} \text{Proj}_1 \left( A_{j,n}\right). 
  \end{align*}
  Here, recall that for any $A \subset S \times S,$
  $\text{Proj}_1(A) = \{x_1: (x_1,x_2) \in A\}.$ The sequence
  $(B_{k,n} : k \leq n^2)$ also admits the following equivalent
  backward recursive definition,
  \begin{align*}
    B_{n^2,n} = \text{Proj}_1 \left( A_{n^2,n}\right), \text{ and } B_{k,n} =
    \text{Proj}_1 \left( A_{k,n}\right) \setminus \cup_{j > k} B_{j,n}
    \text{ for } k < n^2,
  \end{align*}
  that renders the collection $(B_{k,n}: k \leq n^2)$ disjoint.  As
  the function $g$ is upper semicontinuous, it is immediate that the
  sets $A_{k,n}$ are Borel measurable and their corresponding
  projections $B_{k,n}$ are analytic subsets of $S.$ Then, due to
  Jankov-von Neumann selection theorem\footnote{See, for example,
    Chapter 7 of \citet{bertsekas1978stochastic} for an introduction
    to analytic subsets and Jankov-von Neumann measurable selection
    theorem}, there exists, for each $k \leq n^2$ such that
  $A_{k,n} \neq \emptyset$, an universally measurable function
  $\gamma_k(x): \text{Proj}_1(A_{k,n}) \rightarrow S$ such that
  $(x,\gamma_k(x)) \in A_{k,n},$ or, in other words,
  \begin{align}
    \frac{k-1}{n} \leq g(x, \gamma_k(x)) \leq \frac{k}{n}.
    \label{Selector-Construction}
  \end{align}
  Next, let us define the function $\Gamma_n: S \rightarrow S$ as
  below:
  \begin{align*}
    \Gamma_n(x) =
    \begin{cases}
      \gamma_k(x), \quad &\text{ if } x \in B_{k,n} \text{ for some } k \leq n^2\\
       \ \ x, &\text{ otherwise }.
    \end{cases}
  \end{align*}
  This definition is possible because
  $B_{k,n} \subseteq \text{Proj}_1(A_{k,n}),$ and the collection
  $(B_{k,n}: k \leq n^2)$ is disjoint. As each $\gamma_k$ is
  universally measurable, the composite function $\Gamma_n(x)$ is
  universally measurable as well, and satisfies that
  \begin{align*}
    % f(x) \vee \left(
    \sup \left\{ g(x,y): g(x,y) \leq  n, y \in S \right\} -
    \frac{1}{n} % \right)
    \leq
    g(x, \Gamma_n(x)) \leq f(x) \vee n,
  \end{align*}
  for each $x.$ Both sides of the inequality above can be inferred
  from \eqref{Selector-Construction} after recalling that
  $g(x,x) = f(x) - \lambda c(x,x) = f(x).$ Letting
  $n \rightarrow \infty,$ we see that
  \begin{align}
    \phi_\lambda(x) := \sup \{ g(x,y) :  y \in S \} \ \leq \  \varliminf_n g(x,
    \Gamma_n(x) ) \ \leq \  \infty.
    \label{Near-Optimality-of-Selectors}
  \end{align}
  Next, define the family of probability measures $(\pi_n: n \geq 1)$
  as below:
  \begin{align*}
    d\pi_n(x,y) := d\mu(x) \times d\delta_{\Gamma_n(x)} (y), 
  \end{align*}
  with $\delta_a(\cdot)$ representing the dirac measure centred at
  $a \in S.$ As $g(x,y) \leq f(x) \vee n,$ $\pi_n$ almost surely, we
  have that the functions $c(x,y)$ and $f(y)$ are integrable with
  respect to $\pi_n.$ This means that $\pi_n \in E$ (here, recall that
  $E$ is the set of probability measures satisfying conditions (a)-(c)
  in the statement of Proposition
  \ref{PROP-DUALITY-IND-MEAS}). Further, as
  $S_{\pi_n} \supseteq \spt (\mu),$
  \begin{align*}
    \sup_{y \in S_{\pi_n}} g(x,y) = \sup_{y \in S_{\pi_n}} \big\{ f(y) - \lambda c(x,y)
    \big\} \geq f(x) - \lambda c(x,x) =f(x), \quad \text{ for all } x,
    \ \mu \text{ a.s., }
  \end{align*}
  which is integrable with respect to the measure $\mu.$ Therefore,
  due to Fatou's lemma,
\begin{align*}
  \varliminf_{n \rightarrow \infty} \int \sup_{y \in
  S_{\pi_{n}}} g(x,y) d\mu(x) 
  &\geq \int \varliminf_{n \rightarrow \infty} \sup_{y \in
    S_{\pi_{n}}} g(x,y)d\mu(x) \\
  &\geq \int \varliminf_{n \rightarrow \infty} g(x,\Gamma_n(x))d\mu(x) 
  \geq \int \phi_\lambda(x) d\mu(x),
\end{align*}
where the last inequality follows from
\eqref{Near-Optimality-of-Selectors}. Since $\pi_n$ is a member of set
$E,$ for every $n,$ it is then immediate that
\begin{align*}
  \sup_{\pi \in E} \int \sup_{y \in S_\pi} g(x,y) d\mu(x) \geq   \varliminf_{n \rightarrow \infty} \int \sup_{y \in
  S_{\pi_{n}}}
  g(x,y)
  d\mu(x) \geq \int \phi_\lambda(x) d\mu(x).\\
\text{ Thus, }  \sup_{\pi \in E} \int \sup_{y \in S_\pi} \big\{ f(y)
  -\lambda c(x,y) \big\}
  d\mu(x) \geq \int
    \sup_{y \in S} \big\{ f(y) - \lambda c(x,y) \big\} d\mu(x).
\end{align*}
As $S_\pi \subseteq S,$ for every $\pi \in E,$ the inequality in the
reverse direction is trivially true. This completes the proof of Lemma
\ref{LEM-SUPP-EQUIV}.
\end{proof}

\subsubsection*{\textbf{Proof of Theorem
    \ref{THM-STRONG-DUALITY}(b)}.}
To summarize, in the proof of Part (a) of Theorem
\ref{THM-STRONG-DUALITY}, we established that
\begin{align*}
  I = \inf_{\lambda \geq 0} \left\{ \lambda \delta + \int
  \phi_{\lambda}(x) d\mu(x) \right\},
\end{align*}
where $\phi_\lambda(x) := \sup_{y \in S} \{f(y) - \lambda c(x,y)\}$
is lower-semicontinuous in $\lambda,$ and is bounded by $f(x)$ from
below. Recall that $\int f d\mu$ is finite. If we let
\begin{align*}
  g(\lambda) := \lambda \delta + \int \phi_{\lambda}(x) d\mu(x),
\end{align*}
then due to a routine application of Fatou's lemma, we have that
%\begin{align*}
$\varliminf_{n \rightarrow \infty} g(\lambda_n) \geq g(\lambda),$ 
%\end{align*}
whenever $\lambda_n \rightarrow \lambda.$ In other words, $g(\cdot)$
is lower semicontinuous. In addition, as
$g(\lambda) \geq \lambda \delta + \int f(x) d\mu(x),$ we have that
$g(\lambda) \rightarrow \infty$ if $\lambda \rightarrow \infty.$ This
means that the level sets $\{\lambda \geq 0: g(\lambda) \leq u\}$ are compact
for every $u,$ and therefore, $g(\cdot)$ attains its infimum. In other
words, there exists a $\lambda^\ast  \in [0,\infty)$ such that
\begin{align*}
  I = \lambda^\ast \delta + \int \phi_{_{\lambda^\ast}}(x) d\mu(x).
\end{align*}
Thus, we conclude that a dual optimizer of the form
$(\lambda^\ast, \phi_{_{\lambda^\ast}})$ always exists.  Next, if the
primal optimizer $\pi^\ast$ satisfying $I = I(\pi^\ast)$ also exist,
then
% \begin{align}
%   I  = \inf_{\lambda > 0} \sup_{\pi \in E} T(\lambda, \pi),
% \label{Inter-Dual-Existence}
% \end{align}
% where $T(\lambda, \pi)$ is lower semicontinuous in $\lambda,$ for
% every fixed $\pi.$ As a pointwise supremum of lower semicontinuous
% functions is again lower semicontinuous, it follows that
% $\sup_{\pi \in E} T(\lambda, \pi)$ is also lower semicontinuous. In
% addition to this lower semicontinuity, as
% $T(\lambda, \pi) \rightarrow \infty,$ as $\lambda \rightarrow \infty,$
% it follows that the infimum in \eqref{INTER-DUALITY-THM1} is
% achieved. In other words, there exists a $\lambda^\ast$ such that 
% \begin{align*}
%   I = J \left(\lambda^\ast, \phi_{\lambda^\ast} \right).
% \end{align*}
% Here, recall the definition of $\phi_{\lambda}(x) := \sup_{y \in S}
% \left\{ f(y) - \lambda c(x,y) \right\},$ for any $\lambda > 0.$
% \begin{align*}
%   I  = \inf_{\lambda \in [0, \lambda_{\text{max}}]} \sup_{\pi \in E} T(\lambda, \pi),
% \end{align*}
% for some $\lambda_{\text{max}} < \infty,$ whenever $I < \infty.$
% Here, recall that $T(\lambda, \pi)$ is lower semicontinuous in
% $\lambda,$ for every fixed $\pi.$ As a pointwise supremum of lower
% semicontinuous functions is again lower semicontinuous, it follows
% that $\sup_{\pi \in E} T(\lambda, \pi)$ is also lower
% semicontinuous. As a result,
\begin{align*}
  \int f(y) d\pi^\ast(x,y) = \lambda^\ast \delta + \int \phi_{_{\lambda^\ast}} (x)
  d\mu(x) ,
\end{align*}
which gives us complementary slackness conditions \eqref{Comp-Slack-1}
and \eqref{Comp-Slack-2}, as a consequence of equality getting
enforced in the following series of inequalities:
\begin{align*}
  \int f(y) d\pi^\ast(x,y) &= \int \big( f(y) - \lambda^\ast c(x,y)
                             \big) d\pi^\ast(x,y) + \lambda^\ast \int c(x,y) d\pi^\ast(x,y)\\
                           &\leq \int \phi_{_{\lambda^\ast}}(x)
                             d\pi^\ast(x,y) + \lambda^\ast \int c(x,y)
                             d\pi^\ast(x,y)\\
                           &\leq \int \phi_{_{\lambda^\ast}}(x) d\mu(x) + \lambda^\ast \delta.
 \end{align*}
 Alternatively, if the complementary slackness conditions
 \eqref{Comp-Slack-1} and \eqref{Comp-Slack-2} are satisfied by any
 $\pi^\ast \in \Phi_{\mu,\delta}$ and
 $(\lambda^\ast, \phi_{\lambda}^\ast) \in \Lambda_{c,f},$ then
 automatically,
 \begin{align*}
   \int f(y)d\pi^\ast(x,y) &= \int \big(f(y) - \lambda^\ast c(x,y)\big)
                             d\pi^\ast(x,y) + \lambda^\ast \int c(x,y)d\pi^\ast(x,y)\\
                           &= \int \sup_{z \in S} \big\{f(z) -
                             \lambda^\ast c(x,z) \big\} d\pi^\ast(x,y)
                             + \lambda^\ast \delta\\ 
                           &= \int \phi_{_{\lambda^\ast}}(x) d\mu(x) +
                             \lambda^\ast \delta = J(\lambda^\ast, \phi_{\lambda^\ast}),
 \end{align*}
 thus proving that $\pi^\ast$ and
 $(\lambda^\ast, \phi_{_{\lambda^\ast}})$ are, respectively, the
 primal and dual optimizers.  This completes the proof of Theorem
 \ref{THM-STRONG-DUALITY}. \hfill$\Box$
% }
% \label{Rem-Comp-Slackness-Proof}
% \end{remark}

 \section{{On the existence of a primal optimal transport plan.}}
 \label{Sec-Primal-Opt-Existence}
 Unlike the well-known Kantorovich duality where an optimizer that
 attains the infimum in \eqref{Defn-Transport-Cost} exists if the
 transport cost function $c(\cdot,\cdot)$ is nonnegative and lower
 semicontinuous (see, for example, Theorem 4.1 in
 \citet{villani2008optimal}), a primal optimizer $\pi^\ast$ satisfying
 $I(\pi^\ast) = I = J$ need not exist for the primal problem,
 $I = \sup\{ I(\pi): \pi \in \Phi_{\mu,\delta}\},$ that we have
 considered in this paper.  The feasible set for the problem
 \eqref{Defn-Transport-Cost} in Kantorovich duality, which is the set
 of all joint distributions $\Pi(\mu,\nu)$ with given $\mu$ and $\nu$
 as marginal distributions, is compact in the weak topology. On the
 other hand, the primal feasibility set $\Phi_{\mu,\delta},$ for a
 given $\mu$ and $\delta > 0,$ that we consider in this paper is not
 necessarily compact, and the existence of a primal optimizer
 $\pi^\ast \in \Phi_{\mu,\delta},$ satisfying $I(\pi^\ast) = I,$ is
 not guaranteed.  Example \ref{Eg-No-Primal-Opt} below demonstrates a
 setting where there is no primal optimizer
 % that there may be no primal optimizer
 $\pi^\ast \in \Phi_{\mu,\delta}$ satisfying $I(\pi^\ast) = I$ even if
 the transport cost $c(\cdot,\cdot)$ is chosen as a metric defined on
 the Polish space $S.$
 \begin{example}
   Consider $S = \mathbb{R}.$ Let $\mu = \delta_{\{0\}},$ the dirac
   measure at $0,$ be the reference measure defined on
   $(\mathbb{R},\mathcal{B}(\mathbb{R})).$ Let $f(x) = (1-\exp(-x))_+$
   and $c(x,y) = \vert x - y \vert / (1 + \vert x - y \vert)$ for all
   $x,y \in \mathbb{R}.$ The primal problem of interest is
   $I = \sup\{ I(\pi): \pi \in \Phi_{\mu,\delta}\},$ where
   $I(\pi) := \int f(y)d\pi(x,y),$ for the choice $\delta = 2.$ We
   first argue that $I = 1$ here: As $f(x) \leq 1$ for every $x \in
   \mathbb{R},$ we have $I \leq 1.$ 
   % For a given $\lambda \geq 0,$ we first evaluate,
   % $\phi_{_{\lambda}}(x) := \sup_{y \in \mathbb{R}}\{ f(y) - \lambda
   % c(x,y)\} = 1 - \lambda \wedge e^{-x}.$ 
   For $\lambda \geq 0,$ recall the definition
   $\phi_{\lambda}(x) := \sup_{y \in \mathbb{R}}\{ f(y) - \lambda
   c(x,y)\}.$ As Assumptions (A1) and (A2) hold, an application of
   Theorem \ref{THM-STRONG-DUALITY}(a) results in,
   \begin{align*}
     I &= \inf_{\lambda \geq 0} \left\{ \lambda \delta + \int
         \phi_{\lambda}(x) d\mu(x)\right\} = \inf_{\lambda \geq 0} \big\{
         2\lambda + \phi_{\lambda}(0) \big\}\\ 
       & = \inf_{\lambda \geq  0} \left\{ 2\lambda + \sup_{y \geq 0} \left\{ 1 - e^{-y} -
         \lambda \frac{y}{1+y} \right\} \right\} \geq \sup_{y \geq 0}
         \inf_{\lambda \geq 0}  \left\{ 1 - e^{-y} + 2\lambda - \lambda
         \frac{y}{1+y} \right\}  = 1.
   \end{align*}
   Since we had already verified that $I \leq 1,$ the above lower
   bound results in $I = 1,$ with the infimum being attained at
   $\lambda^\ast = 0.$ However, as $f(x) < 1$ for every
   $x \in \mathbb{R},$ it is immediate that
   $I(\pi) = \int f(y)d\pi(x,y)$ is necessarily smaller than 1 for all
   $\pi \in \Phi_{\mu,\delta}.$ Therefore, there does not exist a
   primal optimizer $\pi^\ast \in \Phi_{\mu,\delta}$ such that
   $I(\pi^\ast) = I.$ A careful examination of Part (b) of Theorem
   \ref{THM-STRONG-DUALITY} also results in the same conclusion: A
   primal optimizer, if it exists, is concentrated on
   $\{(x,y): x \in S, y \in \arg \max\{ f(y) - \lambda^\ast c(x,y)
   \}\},$ which is empty in this example, because
   $\arg \max\{ f(y) - \lambda^\ast c(x,y)\} = \arg \max_{y \geq 0}\{
   1 - \exp(-y) - 0\times c(x,y)\} = \emptyset,$ for every $x \in S.$
   \hfill$\Box$
   \label{Eg-No-Primal-Opt}
 \end{example}

 Recall from Proposition \ref{PROP-DUALITY-COMPACT-2} that a primal
 optimizer always exists whenever the underlying Polish space $S$ is
 compact. In the absence of compactness of $S,$ Proposition
 \ref{Prop-Suff-Cond} below presents additional topological properties
 (\textsc{P-Compactness}) and (\textsc{P-USC}) under which a primal
 optimizer exists.  Corollary \ref{Cor-Suff-Cond-I}, that follows
 Proposition \ref{Prop-Suff-Cond}, discusses a simple set of
 sufficient conditions for which these properties listed in
 Proposition \ref{Prop-Suff-Cond} are easily verified.

 \subsubsection*{Additional notation.} Define
 %\begin{align*}
 $\Phi^\prime_{\mu,\delta} := \left\{ \pi \in \Phi_{\mu,\delta}: \pi
   \big((x,y) \in S \times S: f(x) \leq f(y)\big) = 1\right\}.$
 %\end{align*}
 For every $\pi \in \Phi_{\mu,\delta},$ it follows from
 Lemma~\ref{Lem-Notn-Clar} in Appendix \ref{Appendix-Proofs} that
 there exists
 $\pi^\prime \in \Phi^\prime_{\mu,\delta}$ %\subseteq
                                           %\Phi_{\mu,\delta}$
 such that $I(\pi^\prime) \geq I(\pi),$ whenever $I(\pi)$ is
 well-defined. As a result,
 \[I := \sup\{ I(\pi): \pi \in \Phi_{\mu,\delta}\} = \sup\{ I(\pi):
   \pi \in \Phi^\prime_{\mu,\delta}\}.\]% and there exists a
 % $\pi^\ast \in \Phi^\prime_{\mu,\delta}$ satisfying $I(\pi^\ast) = I$
 % whenever a primal optimizer exists.
 
 Throughout this section, we assume that $(\lambda^\ast,
 \phi_{_{\lambda^\ast}}) \in
 \Lambda_{c,f}$ is a dual-optimal pair satisfying $I = J =
 J(\lambda^\ast, \phi_{_{\lambda^\ast}}) <
 \infty.$ As $J(\lambda^\ast, \phi_{_{\lambda^\ast}}) = \lambda^\ast
 \delta + \int \phi_{_{\lambda^\ast}}d\mu <
 \infty,$ it follows that $\phi_{_{\lambda^\ast}}(x) <
 \infty,$ $\mu-$almost surely.

\begin{proposition}
  Suppose that the Assumptions (A1) and (A2) are in
    force.  In addition, suppose that the functions $c(\cdot,\cdot)$
    and $f(\cdot)$ are such that the following properties are
    satisfied: %\vspace{-15pt}
 \begin{property}[P-Compactness]
    \textnormal{For any $\epsilon > 0,$ there exists a compact
    $K_\epsilon \subseteq S$ such that
    $\mu(K_\epsilon) > 1 - \epsilon,$ and the closure
    $\textnormal{cl}(\Gamma_{\epsilon})$ of the set,
%  \begin{align*}
    $\Gamma_{\epsilon} := \left\{ (x,y) \in K_\epsilon \times S : f(y)
      - \lambda^\ast c(x,y) \geq \phi_{_{\lambda^\ast}}(x) - \gamma
    \right\},$
%  \end{align*}
    is compact, for some $\gamma \in (0,\infty).$}
  \end{property}
  \begin{property}[P-USC]
   \textnormal{ For every collection
    $\{\pi_n: n \geq 1\} \subseteq \Phi^\prime_{\mu,\delta}$ such that
    $\pi_n \Rightarrow \pi^\ast$ for some $\pi^\ast \in
    \Phi_{\mu,\delta},$ we have $\varlimsup_n I(\pi_n) \leq I(\pi^\ast).$}
    % The function $I:\Phi^\prime_{\mu,\delta} \rightarrow \mathbb{R},$
    % defined by $I(\pi) := \int f(y)d\pi(x,y),$ is an upper
    % semicontinuous of $\pi$ (in the topology induced by weak
    % convergence).
  \end{property}
  Then there exists a primal optimizer
  $\pi^\ast \in \Phi_{\mu,\delta}$ satisfying
  $I(\pi^\ast) = I = J = J(\lambda^\ast,\phi_{_{\lambda^\ast}}).$
   % In addition, let
 %  $S' := \{ x \in S: \arg \max_{z \in S} \{f(z) - \lambda^\ast
 %  c(x,z)\} \neq \emptyset\}.$ Then, under Assumptions (A1) and (A2),
 %  $S' \in \mathcal{U}(S).$ Further, if $\mu(S') = 1,$ then there
 %  exists a primal optimizer $\pi^\ast \in \Phi_{\mu,\delta}$
 %  satisfying $I(\pi^\ast) = I.$
  \label{Prop-Suff-Cond}
\end{proposition}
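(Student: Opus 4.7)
The plan is to extract a weak subsequential limit of an $\epsilon$-optimal sequence of transport plans, and then use the two new properties to verify that the limit is primal feasible and attains $I$. The $\epsilon$-optimality characterization from Remark~\ref{Rem-Eps-Opt-Measure-Structure} tells us that a near-optimal $\pi_n$ concentrates its mass on the set where $f(y) - \lambda^\ast c(x,y)$ is close to $\phi_{_{\lambda^\ast}}(x)$; property (P-Compactness) is then tailored exactly so that intersecting this set with a ``$\mu$-large'' compact subset of $S$ produces an essentially compact set, which will yield tightness.

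First, I would choose $\{\pi_n\} \subseteq \Phi^\prime_{\mu,\delta}$ with $I(\pi_n) \geq I - \epsilon_n$ for some $\epsilon_n \downarrow 0$ (possible because $\sup\{I(\pi): \pi \in \Phi^\prime_{\mu,\delta}\} = I$, as noted at the start of this section). By \eqref{Eps-Opt-Char-1}, the nonnegative integrand $\Delta(x,y) := \phi_{_{\lambda^\ast}}(x) - f(y) + \lambda^\ast c(x,y)$ satisfies $\int \Delta\, d\pi_n \leq \epsilon_n$, so Markov's inequality yields $\pi_n\{\Delta > \gamma\} \leq \epsilon_n/\gamma$ for any $\gamma > 0$. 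To establish tightness, fix $\epsilon > 0$ and apply (P-Compactness) to obtain a compact $K_\epsilon \subseteq S$ with $\mu(K_\epsilon) > 1-\epsilon$ and some $\gamma \in (0,\infty)$ for which $\textnormal{cl}(\Gamma_\epsilon)$ is compact, where $\Gamma_\epsilon = (K_\epsilon \times S) \cap \{\Delta \leq \gamma\}$. Since the first marginal of every $\pi_n$ is $\mu$,
\[
\pi_n(\textnormal{cl}(\Gamma_\epsilon)) \ \geq\ \pi_n(\Gamma_\epsilon) \ \geq\ \mu(K_\epsilon) - \pi_n\{\Delta > \gamma\} \ \geq\ 1 - \epsilon - \epsilon_n/\gamma,
\]
which exceeds $1 - 2\epsilon$ for all large $n$. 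Thus $\{\pi_n\}$ is tight, and by Prokhorov's theorem there is a subsequence $\pi_{n_k} \Rightarrow \pi^\ast$ for some $\pi^\ast \in P(S \times S)$.

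Next, I would verify feasibility of $\pi^\ast$: its first marginal equals $\mu$ because $\int g(x)\,d\pi^\ast = \lim_k \int g(x)\,d\pi_{n_k} = \int g\,d\mu$ for every $g \in C_b(S)$, and the lower semicontinuity and nonnegativity of $c$ give $\int c\,d\pi^\ast \leq \varliminf_k \int c\,d\pi_{n_k} \leq \delta$ by the Portmanteau theorem, so $\pi^\ast \in \Phi_{\mu,\delta}$. Finally, property (P-USC) applied to $\{\pi_{n_k}\} \subseteq \Phi^\prime_{\mu,\delta}$ with limit $\pi^\ast \in \Phi_{\mu,\delta}$ yields $I(\pi^\ast) \geq \varlimsup_k I(\pi_{n_k}) = I$, while $I(\pi^\ast) \leq I$ holds by definition of $I$. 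Hence $I(\pi^\ast) = I = J = J(\lambda^\ast, \phi_{_{\lambda^\ast}})$ as required.

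The main obstacle is the tightness step: in a noncompact Polish space $S$, a maximizing sequence $\{\pi_n\}$ in $\Phi_{\mu,\delta}$ need not be tight (even though the first marginal is pinned at $\mu$, the second marginals can drift to infinity). Property (P-Compactness) is precisely the hypothesis that lets us convert the $\epsilon$-optimality bound into tightness; its specific form, involving the set $\Gamma_\epsilon$, is crafted so that the Markov-inequality bound on $\pi_n\{\Delta > \gamma\}$ translates directly into a compact ``fat set'' carrying nearly all of the $\pi_n$-mass. A minor point worth checking is the universal measurability of $\Delta$ and $\Gamma_\epsilon$ (since $\phi_{_{\lambda^\ast}}$ is only universally measurable), but since $\pi_n$ extends uniquely to the universal $\sigma$-algebra and $\textnormal{cl}(\Gamma_\epsilon)$ is closed (hence Borel), this causes no difficulty.
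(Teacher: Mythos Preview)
Your proposal is correct and follows essentially the same three-step structure as the paper's proof: use the $\epsilon$-optimality bound together with Markov's inequality and (P-Compactness) to obtain tightness, pass to a weak subsequential limit via Prokhorov, verify $\pi^\ast \in \Phi_{\mu,\delta}$ using lower semicontinuity of $c$, and then invoke (P-USC) for optimality. The only cosmetic differences are that the paper uses $\epsilon_n = 1/n$, applies (P-Compactness) at level $\epsilon/2$ rather than $\epsilon$, and cites a Fatou-type lemma for weakly converging measures rather than Portmanteau for the bound $\int c\,d\pi^\ast \leq \varliminf_k \int c\,d\pi_{n_k}$.
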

It is well-known that upper semicontinuous functions defined on
compact sets attain their supremum.  As Property 1
(\textsc{P-Compactness}) and Property 2 (\textsc{P-USC}),
respectively, enforce a specific type of compactness and upper
semicontinuity requirements that are relevant to our setup, we use the
suggestive labels (\textsc{P-Compactness}) and (\textsc{P-USC}),
respectively, to identify Properties 1 and 2 throughout the rest of
this section.
\begin{proof}
  \noindent \textit{Proof of Proposition \ref{Prop-Suff-Cond}.}  As
  $I = \sup\{ I(\pi): \pi \in \Phi^\prime_{\mu,\delta}\},$ consider a
  collection $\{\pi_n : n \geq 1\} \subseteq \Phi^\prime_{\mu,\delta}$
  such that $I(\pi_n) \geq I - 1/n.$ We first show that Property
  (\textsc{P-Compactness}) guarantees the existence of a weakly
  convergent subsequence $(\pi_{n_k}: k \geq 1)$ of
  $(\pi_n : n \geq 1)$ satisfying $\pi_{n_k} \Rightarrow \pi^\ast,$
  for some $\pi^\ast \in \Phi^\prime_{\mu,\delta}.$

  \textit{Step 1 $($Verifying tightness of $(\pi_n: n \geq 1))$}: As
  $I(\pi_n) \geq I - n^{-1},$ it follows from \eqref{Inter-Eps-Opt}
  that % in
  % Remark \ref{Rem-Eps-Opt-Measure-Structure}
  %that,
  \begin{align}
    \int \left( \phi_{_{\lambda^\ast}}(x) - f(y) + \lambda^\ast
    c(x,y)\right)d\pi_n(x,y) \leq \frac{1}{n} \quad \text{ and } \quad
    \lambda^\ast \left( \delta -\int cd\pi_n \right) \leq
    \frac{1}{n}. 
    \label{Eps-Optimzers}
  \end{align}
  for every $n \geq 1.$ For any $a > 0,$ Markov's inequality results
  in,
  \begin{align*}
    \pi_n&\left( \big\{ (x,y) \in S \times S: f(y) -
           \lambda^\ast c(x,y) < 
           \phi_{_{\lambda^\ast}}(x) - a \big\}\right) \leq \frac{1}{a}\int \left( \phi_{_{\lambda^\ast}}(x) -
           f(y) + \lambda^\ast c(x,y)\right) d\pi_n(x,y),
%\\ &\leq 1/(n\gamma).
%    \frac{1}{n\gamma}.  
  %  \label{Inter-Suff-1}
  \end{align*}
  which is at most $1/(na).$ Then, given any $\epsilon > 0$ and for
  the choice $a = \gamma$ specified in (\textsc{P-Compactness}), it
  follows from union bound that
  $\pi_n((S \times S) \setminus \Gamma_{\epsilon/2})$ is at least,
  \begin{align*}
    \pi_n\big( \left\{ (x,y): x \notin K_{\epsilon/2}, y \in S
    \right\}\big) + \pi_n \left(\left\{ (x,y) \in K_{\epsilon/2}
    \times S: \ f(y) - \lambda^\ast c(x,y) <
    \phi_{_{\lambda^\ast}}(x) - \gamma \right\} \right) \leq 
    % &\quad\quad
      \frac{\epsilon}{2} + \frac{1}{n\gamma}, 
  \end{align*}
  because 
  % and consequently,
  % $ \pi_n(\Gamma_{\epsilon/2}) = 1 - \pi_n((S \times S) \setminus
  % \Gamma_{\epsilon/2}) \geq 1 - \epsilon/2 - 1/n\gamma,$
  % % \begin{align*}
  % %   \pi_n(\Gamma_{\epsilon/2, \gamma})  
  % %   = 1 -  \pi_n((S \times S) \setminus \Gamma_{\epsilon/2, \gamma})
  % %   % &\geq 1 - \pi_n\big( \left\{ (x,y): x \notin
  % %   %   K_{\epsilon/2}, y \in S  \right\}\big) - \pi_n \left(\left\{ (x,y) \in
  % %   %   K_{\epsilon/2} \times S: \ f(y) -
  % %   %   \lambda^\ast c(x,y) <    \phi_{_{\lambda^\ast}}(x) - \gamma
  % %   %   \right\} \right)\\
  % %   \geq \frac{\epsilon}{2} - \frac{1}{n\gamma}, 
  % % \end{align*}
  % because of \eqref{Inter-Suff-1} and
  $\pi_n( \left\{ (x,y): x \notin K_{\epsilon/2}, y \in S \right\}) =
  \mu (S \setminus K_{\epsilon/2}) \leq \epsilon/2.$ Consequently,
  $\pi_n(\Gamma_{\epsilon/2}) \geq 1 - \epsilon/2 - 1/(n\gamma).$ As
  the closure of $\Gamma_{\epsilon/2}$ is compact for the choice of
  $\gamma$ in (\textsc{P-Compactness}), we have
  $\text{cl}(\Gamma_{\epsilon/2})$ is a compact subset of $S \times S$
  for given $\epsilon > 0.$ Therefore, for all
  $n \geq 2\epsilon^{-1} \gamma^{-1},$ we obtain that
  $\pi_n(\text{cl}(\Gamma_{\epsilon})) \geq 1 - \epsilon$ for any
  given $\epsilon > 0,$ thus rendering that the collection
  $\{\pi_{n}: n \geq 1\}$ is tight.

  Then, as a consequence of Prokhorov's theorem, we have a subsequence
  $(\pi_{n_k}: k \geq 1)$ such that $\pi_{n_k} \Rightarrow \pi^\ast$
  for some $\pi^{\ast} \in P(S \times S).$ 

  \textit{Step 2 $($Verifying $\pi^\ast \in \Phi_{\mu,\delta})$}: As
  $\pi_{n_k}(\ \cdot \times S) = \mu(\cdot),$ we obtain
  $\pi^\ast(\cdot \times S) = \mu(\cdot)$ as a consequence of the weak
  convergence $\pi_{n_k} \Rightarrow \pi^\ast$ (see that
  $\int g(x)d\pi^\ast(x,y) = \lim_k \int g(x)d\pi_n(x,y) = \int
  g(x)d\mu(x)$ for all $g \in C_b(S)$). Further, as $c$ is a
  nonnegative lower semicontinuous function, it follows from a version
  of Fatou's lemma for weakly converging probabilities (see Theorem
  1.1 in \citet{feinberg2014fatou} and references therein) that
  $\delta \geq \varliminf_k \int cd\pi_{n_k} \geq \int cd\pi^\ast.$
  Therefore, $\pi^\ast \in \Phi_{\mu,\delta}.$

  \textit{Step 3 $($Verifying optimality of $\pi^\ast)$}: As Property (\textsc{P-USC}) guarantees that
  $\varlimsup_k I(\pi_{n_k}) \leq I(\pi^\ast)$, we obtain
  $I = \varlimsup_{k} I(\pi_{n_k}) \leq I(\pi^\ast),$ thus yielding
  $I(\pi^\ast) = I.$ The fact that
  $I = J = J(\lambda^\ast,\phi_{\lambda^\ast})$ follows from Theorem
  \ref{THM-STRONG-DUALITY}. 
% Further,
  % as $S \times S$ is a Polish space, it follows from the Skorokhod
  % representation theorem that there exists $S \times S-$valued random
  % elements $\{(X_k,Y_k): k \geq 1\}$ and $(X,Y),$ all defined on a
  % common probability space $(\Omega, \mathcal{F}, \mathbb{P}),$ such
  % that the law of $(X_k,Y_k)$ is $\pi_{n_k},$ the law of $(X,Y)$ is
  % $\pi^\ast,$ and $(X_k,Y_k) \rightarrow (X,Y)$ almost surely. 
%   In addition, it follows from the upper semicontinuity of $I(\pi)$
%   that,
% %  \begin{align*}
%   $I(\pi^\ast) \leq I = \varlimsup_{n} I(\pi_n) \leq I(\pi^\ast),$
% %  \end{align*}
%   thus yielding $I(\pi^\ast) = I.$  
\end{proof}
The following assumptions imposing certain growth conditions on the
functions $c(\cdot,\cdot)$ and $f(\cdot)$ provide a simple set of
sufficient conditions required to verify properties
(\textsc{P-Compactness}) and (\textsc{P-USC}) stated in Proposition
\ref{Prop-Suff-Cond}.  
    \begin{assumption}[A3]
      There exist a nondecreasing function
      $g: \mathbb{R}_+ \rightarrow \mathbb{R}_+$ satisfying
      $g(t) \uparrow \infty$ as $t \rightarrow \infty,$ and a positive
      constant $C$ such that $c(x,y) \geq g(\Vert x -y \Vert)$
      whenever $\Vert x - y \Vert > C.$
    \end{assumption}
        \begin{assumption}[A4]
      There exist an increasing function
      $h: \mathbb{R}_+ \rightarrow \mathbb{R}_+$ satisfying
      $h(t) \uparrow \infty$ as $t \rightarrow \infty,$ and a positive
      constant $K$such that
      $\sup_{x,y \in S} \frac{f(y) - f(x)}{1+ h(\Vert x - y \Vert)}
      \leq K.$ In addition, given $\epsilon > 0,$ there exists a
      positive constant $C_\epsilon$ such that 
      $f(y) - f(x) \leq \epsilon(1 + c(x,y)),$  for every $x,y \in S$
      such that $\Vert x - y \Vert > C_\epsilon.$ 
    \end{assumption}
    The growth condition imposed in Assumption (A4) is similar to the
    requirement that the growth rate parameter $\kappa,$ defined in
    \citet{Gao_Kleyweget}, be equal to 0.

\begin{corollary}
  {Let $S$ be locally compact when equipped with the topology induced
    by a norm $\Vert \cdot \Vert$ defined on $S.$ Suppose that the
    functions $c$ and $f$ satisfy Assumptions (A1) - (A4).  Then,
    whenever the dual optimal pair
    $(\lambda^\ast,\phi_{_{\lambda^\ast}})$ satisfying
    $J(\lambda^\ast, \phi_{_{\lambda^\ast}}) = J < \infty$ is such
    that $\lambda^\ast > 0,$ there exists a primal optimizer
    $\pi^\ast \in \Phi_{\mu,\delta}$ satisfying
    $I(\pi^\ast) = I = J = J(\lambda^\ast, \phi_{_{\lambda^\ast}}).$}
  \label{Cor-Suff-Cond-I}
\end{corollary}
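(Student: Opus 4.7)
The plan is to verify properties (\textsc{P-Compactness}) and (\textsc{P-USC}) of Proposition \ref{Prop-Suff-Cond} for the given dual optimal pair $(\lambda^\ast, \phi_{_{\lambda^\ast}})$ with $\lambda^\ast > 0;$ the existence of a primal optimizer then follows directly from that proposition. Two standing ingredients are used: first, dual feasibility gives $\phi_{_{\lambda^\ast}}(x) \geq f(y) - \lambda^\ast c(x,y)$ for all $x,y,$ and in particular (choosing $y=x$) $\phi_{_{\lambda^\ast}}(x) \geq f(x);$ second, because $S$ is locally compact under a norm topology, closed and bounded subsets of $S$ (and of $S\times S$) are compact.

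For (\textsc{P-Compactness}), given $\epsilon > 0,$ tightness of $\mu$ on the Polish space $S$ furnishes a compact $K_\epsilon \subseteq S$ with $\mu(K_\epsilon) > 1 - \epsilon,$ and I fix any $\gamma > 0.$ For $(x,y) \in \Gamma_\epsilon,$ the defining inequality combined with $\phi_{_{\lambda^\ast}}(x) \geq f(x)$ yields $\lambda^\ast c(x,y) \leq f(y) - f(x) + \gamma.$ Set $\epsilon' := \lambda^\ast/2 > 0$ and let $C_{\epsilon'}$ be as in Assumption (A4). In the regime $\Vert x - y\Vert > C_{\epsilon'},$ the second part of (A4) gives $f(y) - f(x) \leq \epsilon'(1 + c(x,y)),$ which after rearrangement yields $c(x,y) \leq 1 + 2\gamma/\lambda^\ast.$ By Assumption (A3), for $\Vert x - y\Vert$ large this forces $g(\Vert x - y\Vert)$ to be bounded, hence $\Vert x - y\Vert$ is bounded since $g(t) \uparrow \infty;$ the complementary regime $\Vert x - y\Vert \leq C_{\epsilon'}$ is already bounded. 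Since $K_\epsilon$ is bounded, $\Vert y\Vert$ is uniformly bounded over $\Gamma_\epsilon,$ so $\Gamma_\epsilon$ is bounded; local compactness then yields $\textnormal{cl}(\Gamma_\epsilon)$ is compact.

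For (\textsc{P-USC}), consider $\{\pi_n\} \subseteq \Phi^\prime_{\mu,\delta}$ with $\pi_n \Rightarrow \pi^\ast \in \Phi_{\mu,\delta}.$ By Skorokhod's representation theorem I realize the $\pi_n$ and $\pi^\ast$ on a common probability space as $(X_n,Y_n)$ and $(X,Y)$ with $(X_n,Y_n) \to (X,Y)$ almost surely, each $X_n \sim \mu.$ Decompose $f = f^+ - f^-.$ The constraint $\pi_n \in \Phi^\prime_{\mu,\delta}$ forces $f(Y_n) \geq f(X_n)$ a.s., so $f(Y_n)^- \leq f(X_n)^-$ a.s.\ and $\sup_n E[f(Y_n)^-] \leq \int f^- d\mu < \infty;$ since $f^-$ is lower semicontinuous, Fatou gives $E[f(Y)^-] \leq \varliminf_n E[f(Y_n)^-] < \infty.$ For the positive part I establish uniform integrability of $(f(Y_n)^+)_n$ as follows: starting from $f(y)^+ \leq f(x)^+ + (f(y)-f(x))_+$ ($\pi_n$-a.s.), apply the bound $(f(y)-f(x))_+ \leq \epsilon(1 + c(x,y)) + K(1 + h(C_\epsilon))$ obtained by splitting on $\Vert x-y\Vert \lessgtr C_\epsilon$ and invoking both parts of (A4). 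Then $\{(f(Y_n))^+ > M\}$ is contained in $\{(f(X_n))^+ > M/2\} \cup \{c(X_n,Y_n) > L(M,\epsilon)\}$ with $L(M,\epsilon) \to \infty$ as $M\to\infty;$ the first event contributes $\int f^+ \mathbf{1}_{\{f^+ > M/2\}} d\mu \to 0$ since $f \in L^1(d\mu),$ and the second is controlled by Markov's inequality and $\int c d\pi_n \leq \delta,$ with $\epsilon$ sent to $0$ at the end. Upper semicontinuity of $f^+$ together with this uniform integrability (a truncation plus Portmanteau argument) delivers $\varlimsup_n E[f(Y_n)^+] \leq E[f(Y)^+],$ and combining the two parts gives $\varlimsup_n I(\pi_n) \leq I(\pi^\ast),$ verifying (\textsc{P-USC}).

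The main obstacle is the uniform integrability argument in (\textsc{P-USC}): because $f$ is not bounded above, a direct application of Portmanteau fails, and one must use the full growth content of Assumption (A4) in tandem with the bound $\int c d\pi_n \leq \delta;$ the critical feature is that the second part of (A4) permits the constant $\epsilon$ in front of $c(x,y)$ to be taken arbitrarily small, which is what ultimately circumvents the absence of uniform integrability of $c(X_n,Y_n)$ itself. The hypothesis $\lambda^\ast > 0$ is crucial in (\textsc{P-Compactness}), where it allows the defining slack inequality for $\Gamma_\epsilon$ to be inverted into an explicit upper bound on $c(x,y),$ while local compactness of $S$ under the norm topology is what converts that bound into compactness via Heine--Borel.
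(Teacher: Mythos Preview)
Your proof is correct and follows the paper's approach exactly: verify (\textsc{P-Compactness}) and (\textsc{P-USC}) of Proposition~\ref{Prop-Suff-Cond} via the growth conditions (A3)--(A4), the constraint $\int c\,d\pi_n\le\delta$, and the ordering $f(Y_n)\ge f(X_n)$ coming from $\Phi'_{\mu,\delta}$. The only cosmetic difference is that the paper establishes asymptotic uniform integrability of $|f(y)|$ directly and invokes a weak-convergence lemma of Zapa\l a, whereas you pass through Skorokhod's representation and treat $f^+$ (via a uniform-integrability estimate) and $f^-$ (via Fatou for the lower semicontinuous part) separately.
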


\begin{proof}
  We verify the properties (\textsc{P-Compactness}) and
  (\textsc{P-USC}) in the statement of Proposition
  \ref{Prop-Suff-Cond} in order to establish the existence
  of a primal optimizer.\\
  \textit{Step 1 \textnormal{(}Verification of}
  (\textsc{P-Compactness})): As any Borel probability measure on a
  Polish space is tight, there exists a compact
  $K_\epsilon \subseteq S$ such that
  $\mu(K_\epsilon) \geq 1-\epsilon,$ for any $\epsilon > 0.$ Let
  $a_\epsilon := \sup_{x \in K_\epsilon} \Vert x \Vert.$ % Next, it
  % follows from Assumption (A5) that there exist constants
  % $c_1, c_2 \in (0,\infty)$ such that
  % $c(x,y) \geq c_1 g(\Vert x - y \Vert)$ whenever
  % $\Vert x -y \Vert \geq c_2.$ Similarly,
  Given $\epsilon^\prime > 0,$ it follows from Assumption (A4) that
  there exists $C_{\epsilon^\prime}$ large enough satisfying
  $f(y) - f(x) \leq \epsilon^\prime c(x,y)$ for all $x,y$ such that
  $\Vert y - x \Vert > C_{\epsilon^\prime}.$ Then, for any
  $x \in K_\epsilon$ and $\gamma > 0,$
  $f(y) - \lambda^\ast c(x,y) < f(x) - \gamma$ when
  $\Vert y - x \Vert > C \vee C_{\epsilon^\prime}$ is large enough
  such that
  $g(\Vert x - y \Vert) \geq \gamma/(\lambda^\ast - \epsilon^\prime);$
  this is because,
  $f(y) - f(x) -\lambda^\ast c(x,y) < (\epsilon^\prime -
  \lambda^\ast)c(x,y) \leq (\epsilon^\prime - \lambda^\ast)g(\Vert x -
  y \Vert)$ when $\epsilon^\prime < \lambda^\ast$ and
  $ \Vert x - y \Vert > C \vee C_{\epsilon^\prime}.$ In particular, if
  $\Vert y \Vert > c_\epsilon := C \vee C_{\epsilon^\prime} +
  a_\epsilon + g^{-1}(\gamma/(\lambda^\ast - \epsilon^\prime)),$ we
  have $f(y) - \lambda^\ast c(x,y) < f(x) - \gamma$ for every
  $x \in K_\epsilon.$ As $\phi_{_{\lambda^\ast}}(x) \geq f(x),$ it
  follows that,
    \begin{align*}
    \Gamma_{\epsilon} := \left\{ (x,y) \in K_\epsilon \times S : f(y)
    - \lambda^\ast c(x,y) \geq \phi_{_{\lambda^\ast}}(x) - \gamma
    \right\} \subseteq \left\{ (x,y): x \in K_\epsilon, \Vert y \Vert
    \leq 
    c_\epsilon\right\}, 
  \end{align*}
  is a compact subset of $S \times S.$ Therefore,
  $\text{cl}(\Gamma_\epsilon)$ is compact as well, thus verifying
  Assumption (A3) in the statement of Proposition
  \ref{Prop-Suff-Cond}.\\
  \textit{Step 2 \textnormal{(}Verification of } \textsc{P-USC}). Let
  $\{\pi_n: n \geq 1\} \subseteq \Phi^\prime_{\mu,\delta}$ be such
  that $\pi_n \Rightarrow \pi^\ast$ for some
  $\pi^\ast \in \Phi_{\mu,\delta}.$ Our objective is to show that
\begin{align}
  \varlimsup_n I(\pi_n) =
  \varlimsup_n \int f(y)d\pi_n(x,y) \leq \int f(y)d\pi^\ast(x,y) =
  I(\pi^\ast)
  \label{To-Show}
\end{align}
While \eqref{To-Show} follows directly from the properties of weak
convergence when $f$ is bounded upper semicontinuous, an additional
asymptotic uniform integrability condition that,
\[\varlimsup_{M \rightarrow \infty} \sup_n \int_{\vert f(y) \vert > M}
\vert f(y) \vert d\pi_n(x,y) = 0,\] is sufficient to guarantee
\eqref{To-Show} in the absence of boundedness (see, for example,
Corollary 3 in \citet{Zapała2008698}). In order to demonstrate this
asymptotic uniform integrability property, we proceed as follows:
Given $\epsilon > 0,$ Assumption (A4) guarantees that
$f(y) - f(x) \leq \epsilon c(x,y)$ for every $x,y$ satisfying
$\Vert x - y \Vert > C_\epsilon.$ % In addition, let $K_\epsilon > 0$
  % be such that
  % $\mu(\{x \in S: \Vert x \Vert > K_\epsilon\}) < \epsilon$ (such
  % $K_\epsilon$ exist because $S$ is a locally compact Polish
  % space).
  Let $A_1, A_2$ and $B_{(M)}$ be subsets of $S \times S$ defined as
  follows:
    \begin{align*}
      A_1 := \{ (x,y): \Vert x - y \Vert > C_\epsilon\}, \  A_2 :=
        (S \times S) \setminus A_1, \text{ and }
      B_{(M)} := \{ (x,y) : \vert f(y) \vert > M\}  \text{ for  } M > 0. 
  \end{align*}
  % \begin{align*}
  %   &A_1 := \{ (x,y): \Vert x - y \Vert > c_\epsilon\}, \quad A_2 :=
  %     (S \times S) \setminus (A_1 \cup A_2), \quad \text{ and }\\
  %   &B_{(M)} := \{ (x,y) : \vert f(y) \vert > M\} \quad \text{ for  } M > 0. 
  % \end{align*}
  As $\pi_n((x,y):f(x) \leq f(y)) = 1$ for every $n$ (recall that
  $\pi_n \in \tilde{\Phi}_{\mu,\delta}),$ it follows from the growth
  conditions in Assumption (A4) that
  \begin{align*}
    \int \vert f(y) - f(x) \vert d\pi_n(x,y) \leq \epsilon
    \int_{A_1}(1 + c(x,y)) d\pi_n(x,y) + K 
    \int_{A_2} (1+ h(\Vert x - y \Vert)d\pi_n(x,y).
  \end{align*}
  Further, as any $(x,y) \in A_2$ satisfies
  $\Vert x -y \Vert \leq C_\epsilon,$ it follows from the
  nondecreasing nature of $h(\cdot)$ that
  $h(\Vert x - y\Vert) \leq h(C_\epsilon) < \infty,$ for every for
  every $(x,y) \in A_2.$ Combining this observation with the fact that
  $\int cd\pi_n \leq \delta,$ we obtain,
  \begin{align*}
    \int_{B_{(M)}} \vert f(y) - f(x) \vert d\pi_n(x,y) \leq 
    \epsilon (1 + \delta) + K 
    (1+h(C_\epsilon)) \pi_n(A_3 \cap B_{(M)}).
  \end{align*}
%  Further,
  % $\pi_n(A_3) \leq \mu(\{x: \Vert x \Vert > K_\epsilon\}) \leq
  % \epsilon,$ and
  % as $f(y) - f(x) \geq 0,$ $\pi_n$ a.s,
  As $\vert f \vert = f + 2f^-,$ it follows from Markov's inequality
  that, $\pi_n(B_{(M)}) = \pi_n({y: \vert f(y) \vert > M})$ is at
  most,
  \[\frac{1}{M}\int \vert f(y) \vert
    d\pi_n(x,y) = \frac{1}{M} \left( \int f(y)d\pi_n(x,y) + 2\int
      f^-(y)d\pi_n(x,y)\right).\] Since
  $\pi_n((x,y): f(x) \leq f(y))=1,$ we have $f^-(x) \geq f^-(y),$
  $\pi_n$-almost surely, for every $n.$ Consequently,
  $\int f^-(y)d\pi_n(x,y) \leq \int f^-(x)d\pi_n(x,y) = \int
  f^-(x)d\mu(x),$ and therefore,
  \begin{align*}
    \sup_n \pi_n \left(B_{(M)} \right) \leq \frac{1}{M}\left(I + 2\int
    f^-d\mu\right),
  \end{align*}
 where $I = J < \infty,$ and
  $\int f^-d\mu < \infty.$ As a result, we obtain
\begin{align}
  \sup_n \int_{B_{(M)}} \vert f(y) - f(x) \vert d\pi_n(x,y) \leq 
  \epsilon (1 + \delta) + \frac{K}{M} 
  (1+h(C_\epsilon))\left( I + 2\int f^-d\mu\right).
\label{Inter-UI-UB}
\end{align}
Further, as
$\int \vert f(x) \vert d\pi_n(x,y) = \int \vert f(x) \vert d\mu(x)$ is
finite and $\sup_n \pi_n(B_{(M)}) \leq M^{-1}(I + 2\int f^-d\mu)
\rightarrow 0$ when 
$M \rightarrow \infty,$ we have
$\int_{B_{(M)}} \vert f(x) \vert d\mu(x) \rightarrow 0$ as
$M \rightarrow \infty.$ Consequently, letting $M \rightarrow \infty$
and $\epsilon \rightarrow 0$ in \eqref{Inter-UI-UB}, we obtain
\begin{align*}
  \varlimsup_{M \rightarrow \infty} \sup_n \int_{\{\vert f(y) \vert >
  M\}} \hspace{-30pt} \vert f(y) 
  \vert d\pi_n(x,y)  \leq  \varlimsup_{M \rightarrow \infty} \sup_n \int_{B_{(M)}} \vert f(y) - f(x)
  \vert d\pi_n(x,y)  + \varlimsup_{M \rightarrow \infty}
  \int_{B_{(M)}} \vert f(x) \vert d\mu(x) = 0,
\end{align*}
thus verifying the desired uniform integrability property. This
observation, in conjunction with the upper semicontinuity of $f$ and
weak convergence $\pi_{n} \Rightarrow \pi^\ast,$ results in
\eqref{To-Show}.
  % \begin{align*}
  %   \varlimsup_{M \rightarrow \infty} \sup_n \int_{B_{(M)}} \vert f(y) - f(x)
  %   \vert d\pi_n(x,y)  = 0. 
  % \end{align*}
As all the requirements stated in Proposition \ref{Prop-Suff-Cond} are
verified, a primal optimizer satisfying $I(\pi^\ast) = I$
exists. 
\end{proof}

\begin{remark}
  \textnormal{ In addition to the assumptions made in Corollary
    \ref{Cor-Suff-Cond-I}, suppose that $c(x,y)$ is a convex function
    in $y$ for every $x \in S,$ and $f$ is a concave function. Then,
    due to Corollary \ref{Cor-Suff-Cond-I}, a primal optimizer
    $\pi^\ast \in \Phi_{\mu,\delta}$ exists. Further, as the supremum
    in $\sup_{y \in S} \{f(y) - \lambda^\ast c(x,y)\},$ is attained at
    a unique maximizer for every $x \in S,$ it follows from Remark
    \ref{Rem-Uniqueness-Primal-Opt} that the primal optimizer
    $\pi^\ast$ is unique.} 
  \label{Rem-Uniqueness-Primal-Opt-II}
\end{remark}
\vspace{-20pt}

\section{A few more examples.}
\label{Sec-App}
\subsection{Applications to computing general first passage
  probabilities.}
\label{Sec-App-FPP}
Computing probabilities of first passage of a stochastic process into
a target set of interest has been one of the central problems in
applied
probability. % , thanks to gambler's ruin and its applications to
% model financial bankruptcy, actuarial ruin, loss of packets in data
% networks, etc.
The objective of this section is to demonstrate that, similar to the
one dimensional level crossing example in Section \ref{Sec-App-LC},
one can compute general worst-case probabilities of first passage into
a target set $B$ by simply computing the probability of first passage
of the baseline stochastic process into a suitably inflated
neighborhood of set $B.$ The goal of such a demonstration is to show
that the worst-case first passage probabilities can be computed with
no significant extra effort.

\begin{example}
\label{Eg-2D-FPP}
  \textnormal{Let ${R}(t) = (R_1(t),R_2(t)) \in \mathbb{R}^2$ model
    the financial reserve, at time $t,$ of an insurance firm with two
    lines of business. Ruin occurs if the reserve process ${R}_t$ hits
    a certain ruin set $B$ within time $T > 0.$ In the univariate
    case, the ruin set is usually an interval of the form $(-\infty,0)$ or
    its translations (as in Example
    \ref{Eg-Ruin-prob-BM-approx}). However, in multivariate cases, the
    ruin set $B$ can take a variety of shapes based on rules of
    capital transfers between the different lines of businesses. For
    example, if capital can be transferred between the two lines
    without any restrictions, a natural choice is to declare ruin when
    the total reserve $R_1(t) + R_2(t)$ becomes negative. On the other
    hand, if no capital transfer is allowed between the two lines,
    ruin is declared immediately when either $R_1(t)$ or $R_2(t)$
    becomes negative. In this example, let us consider the case where
    the regulatory requirements allow only $\beta \in [0,1]$ fraction
    of reserve, if positive, to be transferred from one line of
    business to the other.  Such a restriction will result in a ruin
    set of the form
    \begin{align*}
      B := \left\{ (x_1,x_2) \in \mathbb{R}^2: \beta x_1 + x_2 \leq 0
      \right\} \cup \left\{ (x_1,x_2) \in \mathbb{R}^2: \beta x_2 +
      x_1 \leq 0 \right\}. 
    \end{align*}
    It is immediately clear that capital transfer is completely
    unrestricted when $\beta = 1$ and altogether prohibited when
    $\beta = 0.$ The intermediate values of $\beta \in (0,1)$ softly
    interpolates between these two extreme cases.  Of course, one can
    take the fraction of capital transfer allowed from line $1$ to
    line $2$ to be different from that allowed from line $2$ to line
    1, and various other modifications. However, to keep the
    discussion simple we focus on the model described above, and refer
    the readers to \citet{hult2006heavy} for a general specification of
    ruin models allowing different rules of capital transfers between
    $d$ lines of businesses.  }

  \textnormal{As in Example \ref{Eg-Ruin-prob-BM-approx}, we take the
    space in which the reserve process $({R}(t): 0 \leq t \leq T)$
    takes values to be $S = D([0,T], \mathbb{R}^2),$ the set of all
    $\mathbb{R}^2$-valued right continuous functions with left limits
    defined on the interval $[0,T].$ The space $S,$ again, as in
    Example \ref{Eg-Ruin-prob-BM-approx}, is equipped with the
    standard $J_1$-metric (see Chapter 3 of \citet{MR1876437}), and
    consequently, the cost function
  \begin{equation}
    c(x,y) = \inf_{\lambda \in \Lambda} \left(\sup_{t \in [0,T]} \vert \vert
    x(t) - y(\lambda(t)) \vert
    \vert_2^2 + \sup_{t \in [0,T]} \vert \lambda(t) - t \vert \right),
    \quad x,y \in S 
    \label{Cost-2dim-Ruin}
  \end{equation}
  is continuous. Here, $||x||_2 = (|x_1|^2 + |x_2|^2|)^{1/2}$ is the
  standard Euclidean norm for $x = (x_1, x_2)$ in $\mathbb{R}^2,$ and
  $\Lambda$ is the set of all strictly increasing functions
  $\lambda:[0,T] \rightarrow [0,T]$ such that both $\lambda$ and
  $\lambda^{-1}$ are continuous. Let $\mu \in \mathcal{P}(S)$ denote a
  baseline probability measure that models the reserve process
  ${R}(t)$ in the path space. {Given $\delta > 0,$ our
    objective is to characterize the worst-case first passage
    probability,
    $\sup\{ P(x(t) \in B \text{ for some } t \in [0,T]): d_c(\mu,P)
    \leq \delta\},$ of the reserve process hitting the ruin set $B.$
%  \begin{align*}
  % $\sup \big\{ P\left\{ \tau_{_B} \leq T \right\} : d_c(\mu,P) \leq
  % \delta \big\},$
%  \end{align*}
%   where $\tau_{_C} := \inf\{ t \in [0,T] : {R}(t) \in C\}$ denotes
% the first time the reserve process ${R}(t)$ hits any Borel
% measurable set $C \subset \mathbb{R}^2.$
As the set $\{ x \in S: x(t)
\in B \text{ for some } t \in [0,T]\}$ is not closed, we consider its
topological closure  
\begin{align*}
  A := \left\{ x \in S: \inf_{t \in [0,T]} (\beta x_1(t) + x_2(t)) \leq
  0\right\} \cup \left\{ x \in S: \inf_{t \in [0,T]} (x_1(t) + \beta
  x_2(t)) \leq 0\right\},
\end{align*}
which, apart from the paths that hit ruin set $B,$ also contains the
paths that come arbitrarily close to $B$ without hitting $B$ due to
the presence of jumps; the fact that $A$ is the desired closure is
verified in Lemma \ref{Lem-2D-Closure} and Corollary
\ref{Cor-2D-Closure} in Appendix \ref{Appendix-Proofs}. Further, it is
verified in Lemma \ref{Lem-Sym-2dim-Ruin} in Appendix
\ref{Appendix-Proofs} that
  \begin{align*}
    c(x,A) = \frac{1}{1+\beta^2}\left[\inf_{t \in [0,T]} \big( \beta
    x_1(t) + x_2(t) \big)^2 \wedge \inf_{t \in [0,T]} \big( \beta x_2(t) + x_1(t)
    \big)^2\right], 
  \end{align*} 
  If the reference distribution $\mu(\cdot)$ is such that $h(u) :=
  E_\mu[c(X,A); c(X,A) \leq u]$ is continuous, then 
  \begin{align}
    \sup\{ P(x(t) \in B \text{ for some } t \in [0,T]): d_c(\mu,P)
    \leq \delta\} &\leq \sup \big\{  P(A)  : d_c(\mu,P) \leq
                    \delta \big\} \label{Closure-Inequality}\\
                  &= \mu \left\{ x \in S: c(x,A) \leq
                    \frac{1}{\lambda^\ast}\right\},  \nonumber
  \end{align}
  as an application of Theorem \ref{Thm-Prob-Reform};
  % \eqref{Set-Modified-Support};
  here, as in Section \ref{Eg-Rob-Probabilities}, % $\lambda^\ast$ is
  % such that
  $1/\lambda^\ast = h^{-1}(\delta) := \inf\{u \geq 0: h(u)
  \geq \delta\}.$
  % $E_\mu[c(X,A); c(X,A) < 1/\lambda^\ast] \leq \delta \leq
  % E_\mu[c(X,A); c(X,A) \leq 1/\lambda^\ast].$ It is verified in Lemma
  % \ref{Lem-Sym-2dim-Ruin} in Appendix \ref{Appendix-Proofs} that
  % \begin{align*}
  %   c(x,A) = \frac{1}{1+\beta^2}\left[\inf_{t \in [0,T]} \big( \beta
  %   x_1(t) + x_2(t) \big)^2 \wedge \inf_{t \in [0,T]} \big( \beta x_2(t) + x_1(t)
  %   \big)^2\right], 
  % \end{align*}
  Following the expression for $c(x,A)$ in Lemma
  \ref{Lem-Sym-2dim-Ruin}, we also have that 
  $\{ x \in S: c(x,A) \leq {1}/{\lambda^\ast}\}$ equals
  \begin{align}
    \left\{ x \in S: \inf_{t \in [0,T]}(\beta x_1 (t) + x_2(t)) \leq
    \sqrt{\frac{1+\beta^2}{{\lambda^\ast}}}\right\} \cup    \left\{ x \in S:
    \inf_{t \in [0,T]}(x_1 (t) + \beta x_2(t)) \leq 
    \sqrt{\frac{1+\beta^2}{{\lambda^\ast}}}\right\}. 
    \label{2D-Soln}
   \end{align}
   Next, if we let $c^\ast = \sqrt{(1+\beta^2)/\lambda^\ast}$ and
   \begin{align}
     A_{(c)} = \left\{ x \in S: \inf_{t \in [0,T]}(\beta x_1 (t) +
     x_2(t)) \leq  c \right\} \cup    \left\{ x \in S: \inf_{t \in [0,T]}(x_1
     (t) + \beta x_2(t)) \leq c\right\} 
     \label{Defn-Ruin-Paths}
   \end{align}
   as a family of sets parameterized by $c \in \mathbb{R},$ then it
   follows from \eqref{2D-Soln} that 
   \begin{align}
     \sup\{P\left( A_{(0)}\right): d_c(\mu,P) \leq \delta\} =
     \mu\left( A_{\left(c^\ast\right)}\right);
     \label{2dim-Wcaseprob-Char}
   \end{align}
   in words, the worst-case probability that the reserve process hits
   (or) comes arbitrarily close to the ruin set $B$ is simply equal to
   the probability under reference measure $\mu$ that the reserve
   process hits (or) comes arbitrarily close to an inflated ruin set
   $B_{(c^\ast)},$ where for any $c \in \mathbb{R}$ we define 
   \begin{align*}
     B_{(c)} := \left\{ (x_1,x_2) \in \mathbb{R}^2: \beta x_1 + x_2 \leq c
    \right\} \cup \left\{ (x_1,x_2) \in \mathbb{R}^2: \beta x_2 +
    x_1  \leq c \right\}.
   \end{align*}
   This conclusion is very similar to \eqref{Ruin-Prob-1dim-shift}
   derived for 1-dimensional level crossing in
   Section~\ref{Sec-App-LC}.  % In particular, if the
  %  reference measure $\mu$ is supported on $C([0,T]; \mathbb{R}^2)$
  %  (that is, the reserve process $R(t)$ is continuous almost surely in
  %  $t$ under the reference measure $\mu$), and if we let
  % \begin{align*}
  %   B_{(c)} := \left\{ (x_1,x_2) \in \mathbb{R}^2: \beta x_1 + x_2 \leq c
  %   \right\} \cup \left\{ (x_1,x_2) \in \mathbb{R}^2: \beta x_2 +
  %   x_1  \leq c \right\},
  % \end{align*}
  % then we have that
  % $\sup \{ P \{ \tau_{_{B_{(0)}}} \leq T \} : d_c(\mu,P) \leq \delta
  % \}$ is bounded by $\mu \{\tau_{_{B_{(c^\ast)}}} \leq T\},$ which is
  % the probability of first passage into the inflated ruin set
  % $B_{(c^\ast)}.$ 
  % depicted in Figure \ref{Fig-2dim-Sym-Ruin}.  then one can see the
  % ruin set $B = B_{(0)}$ as a special case of the
  % parameterized family $(B_{(c)}: c \in \mathbb{R}).$
  % If the reference measure $\mu$ is supported on
  % $C([0,T]; \mathbb{R}^2)$ (that is, the reserve process $R(t)$ is
  % continuous almost surely in $t$ under the reference measure
  % $\mu$),
  % the above equality yields
  % \begin{equation}
  %   \sup \left\{  P\left\{ \tau_{_{B_{(0)}}} \leq T \right\}  : d_c(\mu,P) \leq
  %   \delta \right\} = \mu \left\{\tau_{_{B_{(c^\ast)}}} \leq T
  % \right\},  
  %  \label{2dim-Wcaseprob-Char}
  % \end{equation}
  % a conclusion similar to \eqref{Ruin-Prob-1dim-shift}.
  The original ruin set $B = B_{(0)}$ and the suitably inflated ruin
  set $B_{(c^\ast)}$ are respectively shown in the Figures
  \ref{Fig-2dim-Sym-Ruin}(a) and (b).}  \setcounter{subfigure}{0}
  \begin{figure}[h!]
    \caption{Comparison of computation of ruin under baseline measure
      (in Fig(a)) and worst-case ruin (in Fig(b))}
    \begin{center}
      \subfigure[Computation of ruin under baseline
      measure]{\includegraphics[scale=0.4, page=1]{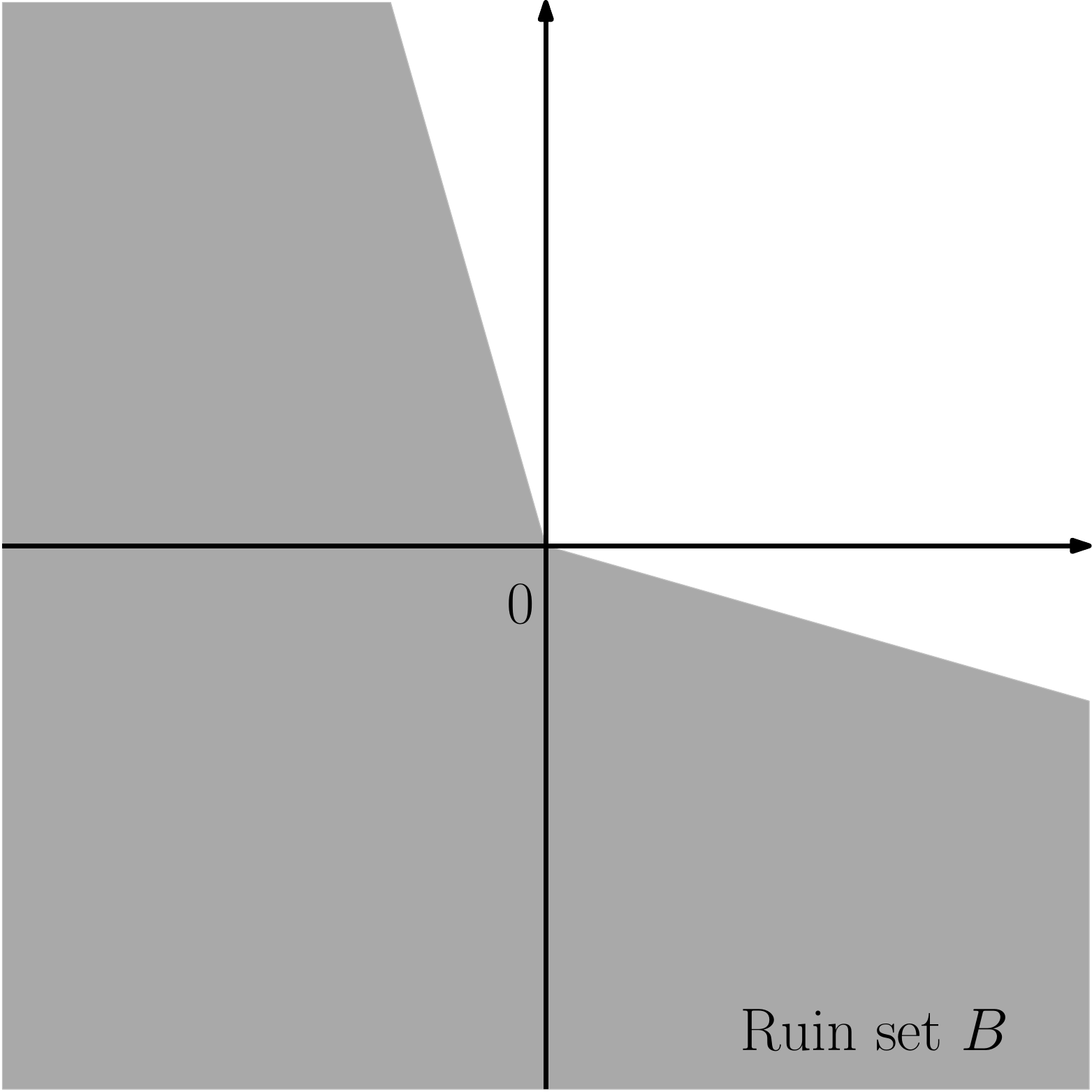}}
      \hspace{50pt} \subfigure[Computation of worst-case ruin using
      the baseline measure]{\includegraphics[scale=0.4,
        page=2]{Sym_2dim_Ruin.pdf}}
   \end{center}
  \label{Fig-2dim-Sym-Ruin}
  \end{figure}} 

\textnormal{Next, as an example, let us take the reserve process
  satisfying the following dynamics as our baseline model:
  \begin{align*}
    d{R}(t) = u{b} + m dt + \Sigma d{B}(t),
  \end{align*}
  where $m \in \mathbb{R}_-^2$ is the drift vector with negative
  components, $\Sigma$ is a 2 $\times$ 2 positive definite covariance
  matrix, and $({B}(t): 0 \leq t \leq T)$ a 2-dimensional standard
  Brownian motion. The real number $u$ denotes the total initial
  capital and ${b} = (b_1, b_2) \in \mathbb{R}^2_+$ such that
  $b_1 + b_2 = 1$ denotes the proportion of initial capital set aside
  for the two different lines of businesses. The probability measure
  induced in the path space by the process ${R}(t)$ is taken as the
  baseline measure $\mu.$ Further, for purposes of numerical
  illustration, we take
  $m = [-0.1, -0.1], {b} = [0.5, 0.5] \text{ and } \Sigma = I_2,$ the
  $2 \times 2$ identity matrix. Our aim is to find total initial
  capital $u$ such that
  $\sup\{P(A_{(0)}): d_c(\mu,P) \leq \delta\} \leq 0.01$ (recall the
  definition of the ruin event $A_{(0)}$ in \ref{Defn-Ruin-Paths}).
% \begin{align*}
%   \sup \left\{  P\left\{ \tau_{_{B_{(0)}}} \leq T \right\}  : d_c(\mu,P) \leq
%   \delta \right\} \leq 0.01. 
%   \end{align*}
  This is indeed possible due to the equivalent characterization in
  \eqref{2dim-Wcaseprob-Char}, and the resulting capital requirement
  for various values of $\delta$ are displayed in Figure
  \ref{Fig-Cap-Req-2dim-Ruin}.}
  % The use of diffusion models to study ruin problems began with the
  % pioneering works of Iglehart \citet{Iglehart1969} and Harrison
  % \citet{HARRISON197767}. Despite being approximate models that match
  % only the first and second moments, diffusion models have enjoyed
  % prominence, in studying ruin, optimal reinsurance, optimal
  % investment and payment of dividends, etc., primarily because of their
  % tractability.The framework we employ here, enables us to provide
  % worst-case bounds by simply working with diffusion models that have
  % been tractable. Studying the use of proposed framework to real-world
  % ruin problems
  \begin{figure}[h!]
    \centering
    \caption{Capital requirement for various values of $\delta.$ The
      capital requirement is calculated to keep the worst-case
      probability of ruin under 0.01}
    \includegraphics[width = 9cm, height = 6cm]{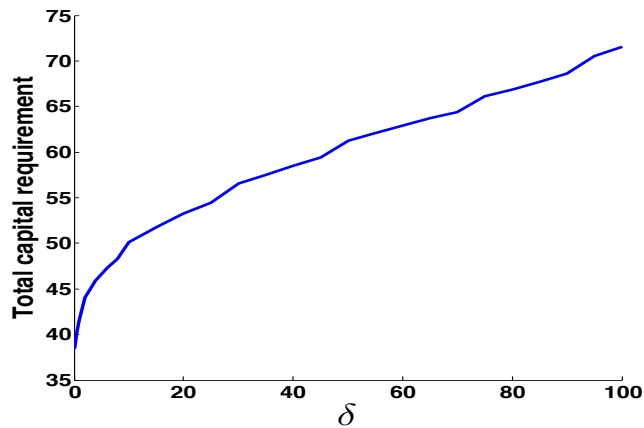}
    \label{Fig-Cap-Req-2dim-Ruin}
  \end{figure}

  \textnormal{{It may also be useful to note that one
      may as well choose $S = C([0,T],\mathbb{R}^2),$ the space of
      continuous functions taking values in $\mathbb{R}^2,$ as the
      underlying space to work with if the modeler decides to restrict
      the distributional ambiguities to continuous stochastic process;
      in that case, the first passage set
      $\{x \in S: x(t) \in B \text{ for some } t \in [0,T]\}$ itself
      is closed, and the inequality in \eqref{Closure-Inequality}
      holds with equality.}}

% \begin{remark} 

  \textnormal{As a final remark, if the modeler believes that model
    ambiguity is more prevalent in one line of business over other,
    she can perhaps quantify that effect by instead choosing
    $\vert \vert x \vert \vert_2 = (\vert x_1 \vert^2 + \alpha \vert
    x_2 \vert^2)^{1/2},$
    for $x = (x_1,x_2) \in \mathbb{R}^2$ in the cost function $c(x,y)$
    defined in \eqref{Cost-2dim-Ruin}. This would penalise moving
    probability mass in one direction more than the other and result
    in the ruin set being inflated asymmetrically along different
    directions. We identify studying the effects of various choices of
    cost functions, the corresponding $\delta$ and the appropriateness
    of the resulting inflated ruin set for real world ruin problems as
    an important direction towards applying the proposed framework in
    quantitative risk management. For example, suppose that the
    regulator has interacted with the insurance company, and both the
    company and the regulator have negotiated a certain level of
    capital requirement multiple times in the past. Then the insurance
    company can calibrate $\delta$ from these previous interactions
    with the regulator. That is, find the value of $\delta$ which
    implies that the negotiated capital requirement in a given past
    interaction is necessary for the bound on ruin probability to be
    lesser than a fixed (regulatory driven) acceptance level (for
    instance, we use 0.01 as the acceptance level for ruin probability
    in our earlier numerical illustration in this example). An
    appropriate quantile of these `implied' $\delta$ values may be
    used to choose $\delta$ based on risk preference.}
% \end{remark}
\end{example}

\subsection{Applications to ambiguity-averse decision making.}
\label{Sec-App-St-Opt}
In this section, we consider a stochastic optimization problem in the
presence of model uncertainty. It has been of immense interest
recently to search for distributionally robust optimal decisions, that
is, to find a decision variable $b$ that solves
\begin{align*}
  \textnormal{OPT} = \inf_{b \in B} \sup_{P \in \mathcal{P}} E \left[ f(X,b) \right].
\end{align*}
Here, $f$ is a performance/risk measure that depends on a random
element $X$ and a decision variable $b$ that can be chosen from an
action space $B.$ The solution to the above problem minimizes
worst-case risk over a family of ambiguous probability measures
$\mathcal{P}.$ Such an ambiguity-averse optimal choice is also
referred as a distributionally robust choice, because the performance
of the chosen decision variable is guaranteed to be better than OPT
irrespective of the model picked from the family $\mathcal{P}.$

There has been a broad range of ambiguity sets $\mathcal{P}$ that has
been considered: For examples, refer \citet{MR2680566, MR2683483,
  MR3294550} for moment-based uncertainty sets, \citet{Hans_Sarg,
  iyengar2005robust, NE:05, lim2007relative, Jain2010, Ben_Tal,
  Wang2015, Jiang2015, hu2012kullback, doi:10.1287/educ.2015.0134} for
KL-divergence and other likelihood based uncertainty sets,
\citet{MR2354780, MR2874755, esfahani2015data, Zhao_Guan,
  Gao_Kleyweget} for Wasserstein distance 
based neighborhoods, \citet{MR2216800} for neighborhoods based on
Prokhorov metric, \citet{Bandi_rob_queues, MR3247335} for uncertainty
sets based on statistical tests and \citet{MR2546839, MR2066239} for a
general overview. As most of the works mentioned above assume the
random element $X$ to be $\mathbb{R}^d$-valued, it is of our interest
in the following example to demonstrate the usefulness of our
framework in formulating and solving distributionally robust
optimization problems that involve stochastic processes taking values
in general Polish spaces as well.

\begin{example}
  \label{Eg-Choosing-Reinsurance-Prop}
  \textnormal{We continue with the insurance toy example considered in
    Section \ref{Sec-App-LC}. In practice, as the risk left to the
    first-line insurer is too large, reinsurance is usually
    adopted. In proportional reinsurance, one of the popular forms of
    reinsurance, the insurer pays only for a proportion $b$ of the
    incoming claims, and a reinsurer pays for the remaining $1-b$
    fraction of all the claims received. In turn, the reinsurer
    receives a premium at rate $p_r = (1+\theta)(1-b)\nu cm_1$ from
    the insurer. Here, $\theta > \eta,$ otherwise, the insurer could
    make riskless profit by reinsuring the whole portfolio. The
    problem we consider here is to find the reinsurance proportion $b$
    that minimises the expected maximum loss that happens within
    duration $T.$ In the extensive line of research that studies
    optimal reinsurance proportion, diffusion models have been
    particularly recommended for tractability reasons (see, for
    example \citet{Hojgaard_Taksar,schmidli2001}). As in Example
    \ref{Eg-Ruin-prob-BM-approx}, if we take
    $\sqrt{\nu m_2} B(t) + \nu m_1 t$ to be the diffusion process that
    approximates the arrival of claims, then
    \begin{align*}
      L_b(t) &:= p_rt + b\big(\sqrt{\nu m_2} B(t) + \nu m_1 t \big) -
               pt \\
             &=  b\sqrt{\nu  m_2} B(t)- \big( b\theta - (\theta - \eta) \big) \nu
               m_1t  
    \end{align*}
    is a suitable model for losses made by the firm. Here,
    $p_r = (1 + \theta) (1-b) \nu m_1$ is the rate of payout for the
    reinsurance contract, and $p = (1 + \eta) \nu m_1$ is the rate at
    which a premium income is received by the insurance firm. The
    quantity of interest is to determine the reinsurance proportion
    $b$ that minimises the maximum expected losses,
    \begin{equation}
      L := \inf_{b \in [0,1]}  E \left[ \max_{t \in [0,T]}  L_b(t)
      \right].
     \label{Nonrob-Opt-Losses}
    \end{equation}
    However, as we saw in Example \ref{Eg-Ruin-prob-BM-approx},
    conclusions based on diffusion approximations can be
    misleading. Following the practice advocated by the rich
    literature of robust optimization, we instead find a reinsurance
    proportion $b$ that performs well against the family of models
    specified by $\mathcal{P} := \{P: d_c(\mu,P) \leq \delta\}.$ In
    other words, we attempt to solve for
    \begin{align*}
      L' := \inf_{b \in [0,1]} \sup_{P \in \mathcal{P}} E_P\left[ \sup_{t
      \in [0,T]}  \big( a_2(b)  X(t) - a_1(b)t  \big) \right],
    \end{align*}
    where $X$ is a random element in space $D[0,T]$ following measure
    $P,$ $a_2(b) := b\sqrt{\nu m_2}$ and
    $a_1(b) :=(b\theta - (\theta - \eta)) \nu m_1.$ Here, as in
    Section \ref{Eg-Rob-Probabilities}, we have taken $S = D[0,T]$ and
    \begin{align*}
      c(x,y) = \left(d_{_{J_1}}(x,y)\right)^2.
    \end{align*}
    For $x \in D[0,T],$ if we take
    \begin{align*}
      f(x,b) := \sup_{t \in [0,T]}\big(a_2(b)x(t) - a_1(b)t \big) \quad
               \text{ and } \quad 
      \phi_{\lambda,b}(x) := \sup_{y \in S} \left\{ f(y,b) - \lambda
      c(x,y) \right\},
    \end{align*}
    then due to the application of Theorem \ref{THM-STRONG-DUALITY},
    we obtain
    \begin{align*}
      L' = \inf_{b \in [0,1]} \sup\left\{ E_P[f(X,b)]: d_c(\mu,P) \leq
      \delta \right\}
      = \inf_{b \in [0,1]} \inf _{\lambda \geq 0} \left\{ \lambda \delta + \int
      \phi_{\lambda,b}(x) d\mu(x) \right\},
    \end{align*}
    To keep this discussion terse, it is verified in Lemma
    \ref{Lem-Exp-Max-BM-App} in Appendix \ref{Appendix-Proofs} that the
    inner infimum evaluates simply to
    \begin{align*}
      E \left[ \sup_{t \in [0,T]} \big( a_2(b)B(t) - a_1(b) t \big)
      \right] + a_2(b) \sqrt{\delta}.
    \end{align*}
    % \begin{align*}
    %   \phi_{\lambda,b}(x) = f(x) + \sup_{c > 0} \big( a_2(b) c -
    %   \lambda c^p\big). 
    % \end{align*}
   As a result, 
   \begin{align}
     L' = \inf_{b \in [0,1]} \left\{  E \left[ \sup_{t \in [0,T]}
     \big( a_2(b)B(t) - a_1(b) t \big)
     \right] +  a_2(b) \sqrt{\delta} \right\},
     \label{Rob-Opt-Losses}
   \end{align}
   which is an optimization problem that involves the same effective
   computational effort as the non-robust counterpart in
   \eqref{Nonrob-Opt-Losses}. For the specific numerical values
   employed in Example \ref{Eg-Ruin-prob-BM-approx}, if we
   additionally take the new parameter $\theta = 0.3,$ the Brownian
   approximation model evaluates to the optimal choice $b = 0.66$ and
   the corresponding loss $L = 17.63,$ whereas the robust counterpart
   in \eqref{Rob-Opt-Losses} evaluates to worst-case $L' = 28.86$ for
   the ambiguity-averse optimal choice $b = 0.42.$ For a collection of
   various other examples of using Wasserstein based ambiguity sets in
   the context of distributionally robust optimization, refer
   \citet{esfahani2015data, Zhao_Guan} and \citet{Gao_Kleyweget}.}
\end{example}

\section*{Acknowledgments.}
The authors would like to thank the anonymous referees whose valuable
suggestions have been immensely useful in supplementing the proof of
the strong duality theorem with crisp arguments at various
instances. The authors would also like to thank Garud Iyengar and
David Goldberg for helpful discussions, and Rui Gao for providing a
comment that led us to add Remark \ref{Rem-Primal-Opt-Cbar-Not-equal}
in the paper. The authors gratefully acknowledge support from Norges
Bank Investment Management and NSF grant CMMI 1436700.

\bibliography{Transport}

\appendix 
\section{Brownian embeddings for the estimation of $\delta$ in Example
  \ref{Eg-Ruin-prob-BM-approx}}
\label{Appendix-Embeddings}
\noindent
Recall that the definition of optimal transport cost between two
probability measures $\mu$ and $\nu,$ denoted by $d_c(\mu,\nu),$
involves computing the minimum expected cost over all possible
couplings between $\mu$ and $\nu$ (see Section \ref{Sec-Notn-Assump}
for a definition). Though it may not always be possible to identify
the optimal coupling (with the lowest expected cost) between $\mu$ and
$\nu,$ one can perhaps employ a `good' coupling to derive an upper
bound on $d_c(\mu,\nu).$ In this section, we describe one such
coupling, popularly referred as Skorokhod embedding, between the risk
process $R(t)$ and its Brownian motion based diffusion approximation
$R_B(t)$ in Example \ref{Eg-Ruin-prob-BM-approx}. More specifically,
we `embed' the compensated compound Poisson process
\[Z(t) = \frac{1}{\sqrt{m_2}} \left(\sum_{i = 1}^{N_t} X_i - m_1 t
\right)\]
in a Brownian motion $B(t)$ to obtain a coupling between the risk
processes $R(t)$ and $R_B(t)$ in order to choose a $\delta$ in Example
\ref{Eg-Ruin-prob-BM-approx}.  Here, the symbols $m_1$ and $m_2$
denote the first and second moments of claim sizes $X_i,$ and $N_t$ is
a unit rate Poisson process.  Please refer Example
\ref{Eg-Ruin-prob-BM-approx} in Section \ref{Sec-App-LC} for a
thorough review of notations. The procedure is data-driven in the
sense that we do not assume the knowledge of the distribution of claim
sizes $X_i.$ Instead, we simply assume access to an oracle that
provides independent realizations of claim sizes $\{X_1,X_2,\ldots\}.$
Given this access to claim size information, Algorithm
\ref{Algo-Skorokhod-Embedding-CPP-BM} below specifies the coupling
that embeds the process $Z(t)$ in Brownian motion $B(t).$

\begin{algorithm}[h!]
  \caption{To embed the process $(Z(t) : t \geq 0)$ in Brownian motion
    $(B(t): t \geq 0)$\newline Given: Brownian motion $B(t),$ moment
    $m_1$ and independent realizations of claim sizes
    $X_1,X_2,\ldots$}
    \begin{algorithmic}
      \\
      \State Initialize $\tau_0 := 0$ and $\Psi_0 := 0.$ For
      $j \geq 1,$ recursively  define,
      \[\tau_{j+1} := \inf \bigg\{ s \geq \tau_j: \sup_{\tau_j \leq r \leq s}
      B_r - B_s = X_{j+1} \bigg\}, \text{ and } \Psi_j := \Psi_{j-1} +
      X_j.\]

      \State Define the auxiliary processes
      \begin{align*}
        \tilde{S}(t) := \sum_{j > 0} \sup_{\tau_j \leq s \leq t} B(s)
      \mathbf{1} \left(\tau_j \leq t < \tau_{j+1} \right) \text{ and }
        \tilde{N}(t) := \sum_{j \geq 0} \Psi_j \mathbf{1}(\tau_j \leq
      t < \tau_{j+1}).
      \end{align*}

     \State Let $A(t) := \tilde{N}(t) + \tilde{S}(t),$ and identify the
     time change $\sigma(t) := \inf\{ s : A(s) = m_1 t\}.$ Next,
     take the time changed version $Z(t) := \tilde{S}(\sigma(t)).$\\

     \State Replace $Z(t)$ by $-Z(t)$ and $B(t)$ by $-B(t).$
  \end{algorithmic}
\label{Algo-Skorokhod-Embedding-CPP-BM} 
\end{algorithm}

Algorithm \ref{Algo-Skorokhod-Embedding-CPP-BM} is a brief description
of the coupling developed in \citet{khoshnevisan1993}, where it is also
proved that the process $Z(t)$ output by the construction is indeed
the desired compound Poisson process `closely' coupled with the
Brownian motion $B(t).$ Figure \ref{Fig-Coupled-Paths-Sko-Emb} below
shows a typical coupled path output by Algorithm
\ref{Algo-Skorokhod-Embedding-CPP-BM}. Several independent
replications of such coupled paths are used to simulate the coupled
risk processes $R(t)$ and $R_B(t)$ in Example
\ref{Eg-Ruin-prob-BM-approx}, and $\delta$ is chosen as prescribed by
the confidence intervals (obtained due to a straightforward
application of central limit theorem) for the empirical average cost
of the simulated coupling.

\begin{figure}[h!]
  \centering
  \caption{A coupled path output by Algorithm 1}
   \includegraphics[width = 9cm, height = 6cm]{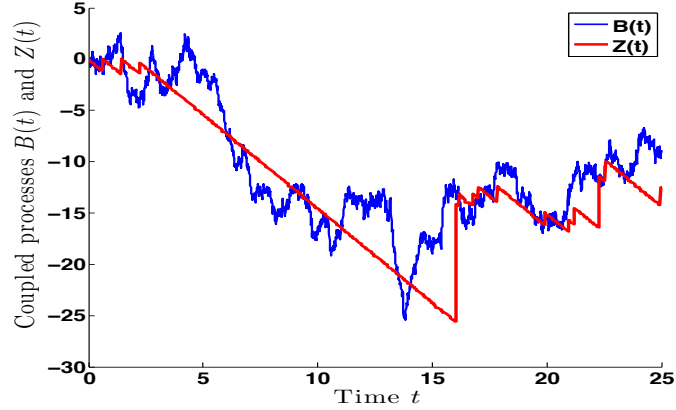} 
\label{Fig-Coupled-Paths-Sko-Emb}
\end{figure} 

\section{Some technical proofs}
\label{Appendix-Proofs}
In this section, we first state and prove all the technical results
that are utilized in Examples \ref{Eg-Ruin-prob-BM-approx},
\ref{Eg-2D-FPP} and \ref{Eg-Choosing-Reinsurance-Prop}. Then we prove
Lemma \ref{Lemma-Ruling-Out-Negative-Measures} and
\ref{Lem-Lim-Int-Interchange}, which are technical results used in
Section \ref{Sec-Duality-Proof} to complete the proof of Theorem
\ref{THM-STRONG-DUALITY}. We conclude the section with Lemma
\ref{Lem-Notn-Clar} and Corollary \ref{Cor-Notn-Clar} that add more
clarity to the notation,
  \begin{align}
    \sup\left\{ \int fd\nu: d_c(\mu,\nu) \leq \delta\right\} :=
    \sup\left\{ \int fd\nu: d_c(\mu,\nu) \leq \delta, \int f^-d\nu <
    \infty\right\}
    \label{Primal-Prob-Interp}
  \end{align}
  adopted while defining the primal problem $I$ in Section
  \ref{Sec-primal-prob}.

  % Recall from Section \ref{Sec-App-LC} that
  % $A_u := \{ x \in D[0,T]: a_2 x(t) - a_1 t > u \text{ for some } t
  % < T\}.$ Here, for brevity, we have let $a_1 = \nu m_1 $ and
  % $a_2 = \sqrt{\nu m_2}.$ In addition, the transportation cost was
  % defined as
% \begin{align*}
%   c(x,y) = \sup_{t \in [0,T]} \left| x(t) - y(t) \right|^p.
% \end{align*}

\subsection{Technical results used in Examples
  \ref{Eg-Ruin-prob-BM-approx} and
  \ref{Eg-Choosing-Reinsurance-Prop}.}  For the results (Lemma
\ref{Lem-Au-Closed}, \ref{Lem-Ruin-Prob-BM-App} and
\ref{Lem-Exp-Max-BM-App}) stated and proved in this section, let
$S = D([0,T], \mathbb{R})$ be equipped with the $J_1$-topology, and
define
\[A_u := \left\{ x \in D([0,T], \mathbb{R}): \sup_{t \in [0,T]} x(t)
    \geq u \right\}, \quad\quad u \in \mathbb{R}.\]

\begin{lemma}
  For every $u \in \mathbb{R},$ the set $A_u$ is closed.
  \label{Lem-Au-Closed}
\end{lemma}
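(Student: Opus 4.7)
The plan is to argue that $A_u$ is the pre-image of the closed half-line $[u,\infty)$ under the continuous functional $\Phi: D([0,T],\mathbb{R}) \to \mathbb{R}$ defined by $\Phi(x) = \sup_{t \in [0,T]} x(t)$. Given that $\Phi^{-1}([u,\infty)) = A_u$, establishing continuity of $\Phi$ in the $J_1$-topology immediately gives the result.

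To show $\Phi$ is continuous, I would take an arbitrary sequence $x_n \to x$ in $(D([0,T],\mathbb{R}), d_{_{J_1}})$. By the definition of the $J_1$-metric, there exists a sequence of time changes $\lambda_n : [0,T] \to [0,T]$, each a strictly increasing continuous bijection fixing the endpoints, such that $\sup_{t \in [0,T]} |\lambda_n(t) - t| \to 0$ and $\sup_{t \in [0,T]} |x_n(\lambda_n(t)) - x(t)| \to 0$ as $n \to \infty$. The crucial observation is that since each $\lambda_n$ is a bijection of $[0,T]$ onto itself, the range of $\lambda_n$ is all of $[0,T]$, and therefore
\[
  \sup_{t \in [0,T]} x_n(\lambda_n(t)) = \sup_{s \in [0,T]} x_n(s) = \Phi(x_n).
\]
Combining this with the elementary inequality $|\sup_t a(t) - \sup_t b(t)| \leq \sup_t |a(t) - b(t)|$, applied to $a = x_n \circ \lambda_n$ and $b = x$, yields
\[
  |\Phi(x_n) - \Phi(x)| \leq \sup_{t \in [0,T]} |x_n(\lambda_n(t)) - x(t)| \longrightarrow 0,
\]
which is continuity of $\Phi$.

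Since $[u,\infty)$ is closed in $\mathbb{R}$ and $\Phi$ is continuous, $A_u = \Phi^{-1}([u,\infty))$ is closed in $D([0,T],\mathbb{R})$, completing the argument. I do not anticipate any real obstacle here; the only point that requires genuine care (as opposed to a rote appeal to continuity) is the bijectivity of the time changes $\lambda_n$, which is what allows the $J_1$-convergence to translate into control of suprema despite the fact that $J_1$-convergence does not imply uniform convergence in general.
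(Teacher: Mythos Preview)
Your proof is correct and follows essentially the same idea as the paper: both arguments exploit that the time changes in the $J_1$-metric are bijections of $[0,T]$, so the supremum is unaffected by composition with them. The paper proceeds slightly more directly by showing the complement of $A_u$ is open via the pointwise Lipschitz estimate $\sup_t y(t) \leq \sup_t x(t) + d_{_{J_1}}(x,y)$, rather than passing through sequential continuity of $\Phi$, but the underlying observation is identical.
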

\begin{proof}
  Pick any $x \notin A_u,$ and let
  $\epsilon := (u - \sup_{t \in [0,T]} x(t))/2.$ As
  $\sup_{t \in [0,T]}x(t) < u,$ $\epsilon$ is strictly positive. Then,
  for every $y \in S$ such that $d_{_{J_1}}(x,y) < \epsilon,$ we have
  that
  \begin{align*}
    \sup_{t \in [0,T]} y(t) \leq \sup_{t \in [0,T]} x(t) +
    d_{_{J_1}}(x,y) \leq \sup_{t \in [0,T]} x(t) + \epsilon, 
  \end{align*}
  which, in turn, is smaller than $u$ because of the way $\epsilon$ is
  chosen. Thus, $\{y \in S: d_{_{J_1}}(x,y) < \epsilon\}$ is a subset
  of $S \setminus A_u,$ and hence $S \setminus A_u$ is open. This
  automatically means that the set $A_u$ is closed. 
\end{proof}

\begin{lemma}
  Let $c(x,y) = d_{_{J_1}}(x,y),$ for $x,y$ in $D([0,T],\mathbb{R}).$
  For any $u \in \mathbb{R},$ $c(x,A_u) := \inf\{ c(x,y): y \in A_u\}$
  is given by
  \[ c(x,A_u) = u - \sup_{t \in [0,T]}x(t) , \]
  for every $x \notin A_u.$% and $x$ continuous in $[0,T].$
  % the following are true:
% \begin{itemize}
% \item[1)] The set
%   $A_u := \{ x \in D[0,T]: a_2 x(t) - a_1 t > u \text{ for some } t <
%   T\}$ is closed
% %\item[2)] The function $c(x,y)$ is lower semicontinuous
% \item[2)] In addition, $c(x,A_u) := \inf\{ c(x,y): y \in A_u\}$ is
%   given by
%   \[ c(x,A_u) = \left( u - \sup_{t \in [0,T]}\big( a_2 x(t) - a_1 t\big) \right)^p,
%   \quad x \notin A_u.\]
% \end{itemize}
 \label{Lem-Ruin-Prob-BM-App}
 \end{lemma}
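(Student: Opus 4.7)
The claim is a distance-to-set computation in Skorokhod space, so I plan to establish the equality by proving matching upper and lower bounds on $c(x, A_u) = \inf\{d_{_{J_1}}(x,y) : y \in A_u\}$ for $x \notin A_u$. Throughout, let $M := \sup_{t \in [0,T]} x(t)$, so that $M < u$ by assumption.

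For the upper bound $c(x, A_u) \leq u - M$, the plan is to exhibit an explicit path in $A_u$ that is close to $x$: consider the translated path $y(t) := x(t) + (u - M)$. Being a translate of $x \in D([0,T], \mathbb{R})$, $y$ is c\`adl\`ag, and $\sup_t y(t) = M + (u-M) = u$, so $y \in A_u$. Using the identity time change $\lambda(t) = t$ in the definition of the $J_1$ metric (so that $\sup_t|\lambda(t)-t| = 0$), one obtains $d_{_{J_1}}(x,y) \leq \sup_t |x(t) - y(t)| = u - M$.

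For the lower bound $c(x, A_u) \geq u - M$, fix an arbitrary $y \in A_u$, an arbitrary time change $\lambda \in \Lambda$, and $\epsilon > 0$. Since $\sup_t y(t) \geq u$, there exists $t_0 \in [0,T]$ with $y(t_0) > u - \epsilon$; this is the one subtle point, because the supremum of a c\`adl\`ag function need not be attained, so $\epsilon$-approximation is used in place of an exact maximizer. Setting $s_0 := \lambda^{-1}(t_0) \in [0,T]$ (well-defined since $\lambda$ is a strictly increasing continuous bijection of $[0,T]$), we have
\[
\sup_{t \in [0,T]} |x(t) - y(\lambda(t))| \;\geq\; |x(s_0) - y(t_0)| \;\geq\; y(t_0) - x(s_0) \;>\; (u-\epsilon) - M.
\]
Since the $J_1$ metric dominates $\sup_t |x(t) - y(\lambda(t))|$ for every admissible $\lambda$ (the remaining term $\sup_t |\lambda(t) - t|$ being non-negative, regardless of whether the sum or max convention is used), it follows that $d_{_{J_1}}(x,y) > u - M - \epsilon$. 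Letting $\epsilon \downarrow 0$ and then taking the infimum over $y \in A_u$ gives $c(x, A_u) \geq u - M$.

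Combining the two bounds yields $c(x, A_u) = u - \sup_{t \in [0,T]} x(t)$, as claimed. The only delicate issue is the possibility that the supremum of a c\`adl\`ag path $y \in A_u$ is a left limit rather than an attained maximum, which is handled transparently by the $\epsilon$-approximation argument above; the rest is a direct estimate once the translated candidate $y = x + (u-M)$ is written down.
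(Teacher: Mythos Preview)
Your proof is correct and follows essentially the same approach as the paper: a translated path $x + (u-M)$ for the upper bound and an $\epsilon$-approximation of the supremum for the lower bound. Your execution is in fact slightly cleaner than the paper's --- you avoid the unnecessary extra $\epsilon$ in the upper bound (the paper shifts by $u-M+\epsilon$ even though $A_u$ is defined with a weak inequality), and you handle the time change directly via $s_0=\lambda^{-1}(t_0)$ rather than first arguing that one may reduce to $\lambda = e$.
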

 \begin{proof}
   Given $\epsilon > 0$, let
   $z(t) := x(t) + \inf_{t \in [0,T]} (u - x(t)) + \epsilon,$ for all
   $t \in [0,T].$ As $z \in A_u,$ it is immediate from the definition
   of $c(x,A_u)$ that
   \begin{align}
     c(x,A_u) \leq d_{_{J_1}}(x, z) \leq \sup_{t \in [0,T]} \vert
     z(t) - x(t) \vert
     = \inf_{t \in [0,T]} (u-x(t)) + \epsilon = u - \sup_{t \in [0,T]}
     x(t) + \epsilon. 
     \label{Inter-LC-Equiv}
   \end{align}
   Next, let $\Lambda$ be the set of strictly increasing functions
   $\lambda$ mapping the interval $[0,T]$ onto itself, such that both
   $\lambda$ and $\lambda^{-1}$ are continuous. Also, let $e$ be the
   identity map, that is, $e(t) = t,$ for all $t$ in $[0,T].$ Then,
   \begin{align*}
     d_{_{J_1}}(x,y) := \inf_{\lambda \in \Lambda} 
     \big\{\vert \vert x - y \circ \lambda \vert \vert_{\infty} + \vert \vert \lambda - e
     \vert \vert_\infty\big\},
   \end{align*}
   where
   $\vert \vert z \vert \vert_\infty = \sup_{t \in [0,T]}|z(t)|,$ for
   any $z$ in $S.$ Since $c(x,y) = d_{_{J_1}}(x,y),$ we have
   \begin{align*}
     c(x,A_u) = \inf_{y \in A_u} \inf_{\lambda \in \Lambda} 
     \big\{\vert \vert x - y \circ \lambda \vert \vert_{\infty} + \vert \vert \lambda - e
     \vert \vert_\infty \big\}.
   \end{align*}
   Next, as $y \circ \lambda \in A_u$ for any $y \in A_u,$ it is
   immediate that one can restrict to $\lambda = e$ without loss of
   generality, and subsequently, the inner infimum is not
   necessary. In other words, for any $\epsilon > 0,$ 
      \begin{align*}
        c(x,A_u) &= \inf_{y \in A_u}\vert \vert x - y \vert
        \vert_{\infty}  = \inf_{y \in A_u} \sup_{t \in [0,T]}\vert
        x(t) - y(t)\vert\\
        &\geq \inf_{y \in A_u} \left( \inf_{t \in [0,T]:  y_t > u - \epsilon}
          \hspace{-4pt} y(t) - \sup_{t
          \in [0,T]} x(t)\right)
        \geq u - \sup_{t \in [0,T]} x(t) -\epsilon.
   \end{align*}
   As $\epsilon$ is arbitrary, this observation, when combined with
   \eqref{Inter-LC-Equiv}, concludes the proof of Lemma
   \ref{Lem-Ruin-Prob-BM-App}. 
 \end{proof}

\begin{lemma}
  Let $c(x,y) = d_{_{J_1}}^2(x,y)$ for $x,y$ in
  $D([0,T], \mathbb{R}).$ Given nonnegative constants $a_1,a_2$ and
  $\lambda,$ define the functions,
  \begin{align*}
    f(x) = \sup_{t \in [0,T]} \big( a_2 x(t) - a_1t \big) \text{ and }
    \phi_{\lambda}(x) = \sup_{y \in S} \left\{ f(y) - \lambda c(x,y) \right\},
  \end{align*}
  for every $x \in S.$ Then, for any $\delta > 0,$
  \begin{align*}
    \inf_{\lambda \geq 0}\left\{ \lambda \delta + \int
    \phi_{\lambda}(x) d\mu(x) \right\} = \int f(x) d\mu(x) + a_2 \sqrt{\delta}.
  \end{align*}
  \label{Lem-Exp-Max-BM-App}
\end{lemma}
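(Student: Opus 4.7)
The plan is to establish the pointwise identity $\phi_{\lambda}(x) = f(x) + a_2^2/(4\lambda)$ for every $x \in S$ and $\lambda > 0$, from which the claim follows by integrating against $\mu$ and invoking the one-dimensional minimization
\[\inf_{\lambda > 0}\left\{\lambda\delta + \frac{a_2^2}{4\lambda}\right\} = a_2\sqrt{\delta}\]
(AM--GM, attained at $\lambda^\ast = a_2/(2\sqrt{\delta})$); the $\lambda = 0$ boundary is non-binding, as $\phi_0 \equiv \sup_y f(y)$ generically exceeds the candidate value.

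For the lower bound $\phi_\lambda(x) \geq f(x) + a_2^2/(4\lambda)$, I would use the constant-shift test function $y_h(s) := x(s) + h$ for $h \geq 0$. A direct computation gives $f(y_h) = f(x) + a_2 h$, and taking the identity reparametrization in the definition of $d_{_{J_1}}$ yields $d_{_{J_1}}(x, y_h) \leq \|y_h - x\|_\infty = h$, so $c(x, y_h) \leq h^2$. Hence
\[\phi_\lambda(x) \geq f(y_h) - \lambda c(x, y_h) \geq f(x) + a_2 h - \lambda h^2,\]
and optimizing over $h \geq 0$ at $h^\ast = a_2/(2\lambda)$ yields the claim.

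For the matching upper bound, by AM--GM ($\lambda r^2 + a_2^2/(4\lambda) \geq a_2 r$ applied with $r = d_{_{J_1}}(x, y)$), it suffices to prove the Lipschitz-type estimate $f(y) - f(x) \leq a_2 d_{_{J_1}}(x, y)$ for every $y \in S$. I would fix $\epsilon > 0$, pick $\tilde\lambda \in \Lambda$ with $u := \|y\circ\tilde\lambda - x\|_\infty$ and $v := \|\tilde\lambda - e\|_\infty$ satisfying $u + v \leq d_{_{J_1}}(x,y) + \epsilon$, and for each $s \in [0, T]$ with $t = \tilde\lambda^{-1}(s)$ write
\[a_2 y(s) - a_1 s = a_2 y(\tilde\lambda(t)) - a_1 \tilde\lambda(t) \leq a_2 x(t) - a_1 t + a_2 u + a_1(t - \tilde\lambda(t)).\]
Taking the supremum in $s$ gives $f(y) - f(x) \leq a_2 u + a_1 v$, and the remaining step is to bound this by $a_2(u + v)$.

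The main obstacle is exactly this last reduction: a naive estimate only gives $\max(a_1, a_2)\, d_{_{J_1}}(x, y)$, not $a_2\, d_{_{J_1}}(x, y)$. Sharpening the coefficient to $a_2$ requires exploiting the boundary constraints $\tilde\lambda(0) = 0$, $\tilde\lambda(T) = T$ imposed on admissible reparametrizations in $\Lambda$, which prevent a pure temporal shift of $y$ relative to $x$ from being realized in $\Lambda$ without incurring matching sup-norm cost $u$; the delicate argument would choose $\tilde\lambda$ so that wherever $t - \tilde\lambda(t)$ is large (which drives the $a_1$-term), the discrepancy $u$ is correspondingly small. An alternative route that skirts this difficulty is to apply Theorem~\ref{THM-STRONG-DUALITY} to rewrite the left-hand side as $\sup\{\int f\, d\nu : d_c(\mu,\nu) \leq \delta\}$ and compute this supremum directly, using the explicit coupling $Y = X + \sqrt{\delta}$ for the lower bound and the Cauchy--Schwarz chain $E_\pi[f(Y) - f(X)] \leq a_2 E_\pi[d_{_{J_1}}(X,Y)] \leq a_2\sqrt{E_\pi[d_{_{J_1}}^2(X,Y)]} \leq a_2\sqrt{\delta}$ for the upper bound, which still rests on the Lipschitz estimate but only requires it to hold $\pi$-almost surely.
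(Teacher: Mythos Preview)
Your lower bound via constant shifts $y_h = x + h$ is exactly the paper's argument. The divergence is in the upper bound. The paper does not attempt the Lipschitz estimate $f(y) - f(x) \leq a_2\, d_{J_1}(x,y)$; instead it argues that, since $y\circ\nu \in S$ for every $\nu\in\Lambda$, one may take $\nu = e$ ``without loss of generality'' in the supremum defining $\phi_\lambda$ (the same device as in Lemma~\ref{Lem-Ruin-Prob-BM-App}), thereby replacing $d_{J_1}^2(x,y)$ by $\|x - y\|_\infty^2$. Once that is granted, the substitution $z = y - x$ and the trivial bound $f(x+z) \leq f(x) + a_2\sup_t z(t) \leq f(x) + a_2\|z\|_\infty$ give $\phi_\lambda(x) \leq f(x) + \sup_{r\geq 0}\{a_2 r - \lambda r^2\} = f(x) + a_2^2/(4\lambda)$ directly, with no $a_1$ appearing. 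This reduction to sup-norm cost is the idea you are missing: it collapses your two-parameter $(u,v)$ accounting into a single $\|z\|_\infty$.

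Your obstacle, however, is genuine and neither of your proposed fixes closes it. The Lipschitz bound $f(y) - f(x) \leq a_2\, d_{J_1}(x,y)$ is \emph{false} when $a_1 > a_2$: take $x$ the continuous tent with $x(0)=x(T)=0$ and peak $M$ at $T/2$, and set $y = x\circ\nu^{-1}$ with $\nu\in\Lambda$ piecewise linear sending $T/2\mapsto T/2 - s$; then $y\circ\nu = x$, so $d_{J_1}(x,y)\leq s$ with $u=0$ and $v=s$, while $f(y)-f(x) = a_1 s$. The endpoint constraints $\tilde\lambda(0)=0$, $\tilde\lambda(T)=T$ are already respected in this example, and your Cauchy--Schwarz alternative still rests on the same pointwise bound. (The same example also shows that the paper's ``$\nu=e$'' reduction deserves scrutiny: in Lemma~\ref{Lem-Ruin-Prob-BM-App} the objective depended on $y$ only through $y\circ\nu$, so the substitution was lossless, but here $f(y) \neq f(y\circ\nu)$ whenever $a_1>0$; you may wish to examine that step closely.)
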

\begin{proof}
  Fix any $x \in S$ and $\lambda > 0.$ For any positive constant $b,$
  as $y(t) = x(t) + b$ is also a member of $S,$ it follows from the
  definition of $\phi_\lambda(x)$ that
  \begin{align*}
    \phi_{\lambda}(x) \geq \sup_{\underset{b \ \geq \ 0}{y: y = x + b}} \left\{
    \sup_{t \in [0,T]} \big( a_2 y(t) - a_1 t\big) - \lambda
   d_{_{J_1}}^2(x,y)\right\}
    =\sup_{b \ \geq \ 0}\left\{
    \sup_{t \in [0,T]} \big( a_2 (x(t) + b) - a_1 t\big) - \lambda
    b^2\right\}.
  \end{align*}
  As the function $a_2 b - \lambda b^2$ is maximized at $b =
  a_2/2\lambda,$ the above lower bound simplifies to 
  \begin{align}
    \label{Inter-Max-Exp-Lem}
    \phi_{\lambda}(x) \ \geq \ f(x) + \frac{a_2^2}{4\lambda}. 
  \end{align}
  To obtain an upper bound, we first observe from the definition of
  the metric $d_{_{J_1}}$ that
  \begin{align*}
    \phi_{\lambda}(x) = \sup_{y \in S} \inf_{\nu \in \Lambda}\left\{ f(y) - \lambda
    \big( \vert \vert  x -  y \circ \nu \vert \vert_\infty + \vert
    \vert \nu - e \vert \vert_\infty\big)^2 \right\},
  \end{align*}
  where the quantities $\Lambda$ and $e$ are defined as in the proof
  of Lemma \ref{Lem-Ruin-Prob-BM-App}. Again, as $y \circ \nu$ lies in
  $S,$ for every time change $\nu \in \Lambda,$ one can take $\nu = e$
  without loss of generality. Consequently,
\begin{align*}
  \phi_{\lambda}(x) = \sup_{y \in S} \left\{ f(y) - \lambda
  \vert \vert  x - y \vert \vert_\infty^2 \right\} = \sup_{z \in S}
  \left\{ f(x + z) - \lambda \vert \vert z \vert \vert_\infty^2 \right\},
  \end{align*}
  where we have also changed the variable from $y-x$ to $z.$ In
  particular, 
  \begin{align*}
    \phi_{\lambda}(x) &= \sup_{z \in S} \left\{ \sup_{t \in [0,T]}
    \big( a_2(x(t) + z(t)) - a_1t\big) - \lambda \Vert  z
                        \Vert_\infty^2 \right\}\\ 
    &\leq \sup_{z \in S} \left\{ \sup_{t \in [0,T]} \big( a_2 x(t) -
      a_1 t\big) + a_2 \hspace{-5pt}\sup_{t \in [0,T]} z(t) - \lambda \Vert  z
       \Vert_\infty^2\right\}\\
    &= f(x) + \sup_{z \in S} \left( a_2  \Vert z 
      \Vert_\infty - \lambda \Vert  z  \Vert_\infty^2
      \right),
  \end{align*}
  which, in turn, is maximized for $||z|| = a_2/2\lambda.$ Combining
  this upper bound with the lower bound in \eqref{Inter-Max-Exp-Lem},
  we obtain that $\phi_\lambda(x) = f(x) + {a_2^2}/{4\lambda}.$ 
% \begin{align*}
%   \phi_\lambda(x) = f(x) + \frac{a_2^2}{4\lambda}.
% \end{align*}
Next, it is a straightforward exercise in calculus to verify that the
function $\lambda \delta + \int \phi_\lambda d\mu$ is minimized at
$\lambda^\ast = a_2/2\sqrt{\delta},$ and subsequently, 
\begin{align*}
  \inf_{\lambda \geq 0} \left\{ \lambda \delta + \int \phi_\lambda
  d\mu\right\} = \int fd\mu + a_2\sqrt{\delta}. 
\end{align*}
This completes the proof. \hfill$\Box$
\end{proof}

\subsection{Proofs of results used in Section \ref{Sec-App-FPP}}
For the results (Lemma \ref{Lem-2D-Closure}, \ref{Lem-Sym-2dim-Ruin}
and Corollary \ref{Cor-2D-Closure}) stated and proved in this section,
let $S = D([0,T], \mathbb{R}^2)$ be equipped with the $J_1$-topology
induced by the metric
  \begin{align*}
    d_{J_1}(x,y) = \inf_{\lambda \in \Lambda}\left\{ \sup_{t \in
    [0,T]}\max_{i=1,2}\vert 
    x_i(t) - y_i(\lambda(t))\vert + \sup_{t \in [0,T]} \vert
    \lambda(t) - t \vert \right\},
  \end{align*}
  where the set $\Lambda,$ as in Lemma 3, is the set of strictly
  increasing functions $\lambda$ mapping the interval $[0,T]$ onto
  itself, such that both $\lambda$ and $\lambda^{-1}$ are continuous. 
  {
\begin{lemma}
  For a given vector $a = (a_1, a_2)$ with $a_1, a_2 \geq 0$ and
  $a_1a_2 > 0, $
  \begin{align*}
    \textnormal{cl}\left(\{x \in S: a^Tx(t) \leq 0 \text{ for some } t
    \in [0,T]\}\right) = \left\{x \in
    S: \inf_{t \in [0,T]} a^Tx(t) \leq 0\right\}. 
  \end{align*}
  \label{Lem-2D-Closure}
\end{lemma}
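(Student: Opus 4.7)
The plan is to show the two inclusions separately, denoting by $E := \{x \in S: a^Tx(t) \leq 0 \text{ for some } t \in [0,T]\}$ the set whose closure is on the left-hand side, and by $F := \{x \in S: \inf_{t \in [0,T]} a^Tx(t) \leq 0\}$ the set on the right-hand side. The inclusion $E \subseteq F$ is trivial. To get $\textnormal{cl}(E) \subseteq F$ it suffices to show that $F$ is closed; the reverse inclusion $F \subseteq \textnormal{cl}(E)$ will be obtained by exhibiting, for any $x \in F$, an explicit sequence of constant-shift perturbations of $x$ that lie in $E$ and converge to $x$ in the $J_1$-topology.

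For closedness of $F$, I would take a sequence $\{x_n\} \subseteq F$ with $x_n \to x$ in $J_1$, and pick time changes $\lambda_n \in \Lambda$ with $\sup_t \Vert x_n(\lambda_n(t)) - x(t)\Vert_2 \to 0$ and $\sup_t |\lambda_n(t) - t| \to 0$. Since $\inf_t a^T x_n(t) \leq 0$, for each $n$ I can pick $s_n \in [0,T]$ with $a^T x_n(s_n) \leq 1/n$, and then set $t_n := \lambda_n^{-1}(s_n)$, so that
\[
a^T x(t_n) \leq a^T x_n(\lambda_n(t_n)) + \Vert a\Vert_2\,\Vert x_n\circ\lambda_n - x\Vert_\infty \leq \tfrac{1}{n} + o(1).
\]
Taking limits yields $\inf_{t \in [0,T]} a^T x(t) \leq 0$, hence $x \in F$.

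For the reverse inclusion, given any $x \in F$, set $u := (1,1) \in \mathbb{R}^2$ and define $y_n(t) := x(t) - u/n$ for $t \in [0,T]$. Since $a_1, a_2 \geq 0$ and $a_1 a_2 > 0$ force $a_1, a_2 > 0$, we have $a^T u = a_1 + a_2 > 0$. Each $y_n$ is càdlàg (as a shift of $x$), and choosing $\lambda = e$ in the definition of $d_{J_1}$ gives $d_{J_1}(y_n, x) \leq \Vert u\Vert_2/n \to 0$. Moreover,
\[
\inf_{t \in [0,T]} a^T y_n(t) = \inf_{t \in [0,T]} a^T x(t) - \tfrac{a^T u}{n} \leq -\tfrac{a^T u}{n} < 0,
\]
so there exists $t_n \in [0,T]$ with $a^T y_n(t_n) < 0$, i.e., $y_n \in E$. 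Hence $x = \lim_n y_n \in \textnormal{cl}(E)$.

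The main subtlety lies in the closedness argument, because the infimum of a càdlàg function is not in general attained and must be related across different time parametrizations; the step $t_n := \lambda_n^{-1}(s_n)$ is what bridges this gap cleanly and is where the $J_1$-topology (as opposed to a weaker topology) is genuinely used. The rest of the argument is essentially a constant-shift approximation that exploits the positivity of both coordinates of $a$.
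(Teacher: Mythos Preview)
Your proof is correct and follows essentially the same two-step structure as the paper: show $F$ is closed, then exhibit constant-shift perturbations to prove $F \subseteq \textnormal{cl}(E)$. The only cosmetic difference is that the paper proves closedness by showing the complement of $F$ is open via a direct $J_1$-metric inequality, whereas you use an equivalent sequential argument with $t_n := \lambda_n^{-1}(s_n)$; both rely on the same time-change mechanism of the $J_1$-topology.
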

\begin{proof}
  We first show that the set
  $C := \{ x \in S: \inf_{t \in [0,T]}a^Tx(t) \leq 0\}$ is closed by
  showing that its complement is open. Given any $x \notin C,$ let
  $\epsilon := \inf_{t\in [0,T]}a^Tx(t) > 0.$ Then, for every
  $y \in S$ such that
  $d_{_{J_1}} \hspace{-3pt}(x,y) < \epsilon/(2a_1 + 2a_2),$ we have
  that
  \begin{align*}
    \inf_{t \in [0,T]} y_i(t) \geq \inf_{t \in [0,T]} x_i(t) -
    d_{_{J_1}}\hspace{-3pt} (x,y) >  \inf_{t \in [0,T]} x_i(t) -
    \frac{\epsilon}{2(a_1 + a_2)}, 
  \end{align*}
  for $i=1,2.$ Further, as $a_1,a_2$ are non-negative, we obtain that
  \begin{align*}
    \inf_{t \in [0,T]} a^Ty(t) \geq \inf_{t \in [0,T]} a^Tx(t)  - (a_1
    + a_2) \frac{\epsilon}{2(a_1 + a_2)} > 0.
  \end{align*}
  Thus, for every $x \in S \setminus C,$ we can find an $\epsilon > 0$
  such that
  $\{ y \in S: d_{_{J_1}}\hspace{-3pt} (x,y) < \epsilon/(2a_1+2a_2)\}$
  is a subset of $S \setminus C,$ and hence $S \setminus C$ is
  open.  Therefore, $C$ is closed. \\
  \indent Letting
  $D := \{x \in S: a^Tx(t) \leq 0 \text{ for some } t \in [0,T]\},$
  the remaining task is to show that for every $\epsilon > 0$ and
  $x \in C,$ there exists $y \in D$ such that
  $d_{_{J_1}}\hspace{-3pt}(x,y) < \epsilon.$ For any given
  $\epsilon > 0$ and $x \in C,$ we first see that there exists
  $t_0 \in [0,T]$ such that $a^Tx(t_0) < \epsilon(a_1+a_2)/2;$ such a
  $t_0$ exists for every $x \in C$ because
  $\inf_{t \in [0,T]}a^Tx(t) \leq 0$. Then
  $(y(t) = (y_1(t), y_2(t)): t \in [0,T])$ defined as
  $y_i(t) = x_i(t) - \epsilon, i = 1,2$ lies in $D$ because
  $a^Ty(t_0) = a^Tx(t_0) - (a_1 + a_2)\epsilon < 0.$ Therefore, every
  $x$ in $C$ is a closure point of $D.$ Therefore, $\text{cl}(D) = C.$
  \hfill$\Box$ 
\end{proof}}

{
\begin{corollary}
  Let
  $B = \{ (x_1,x_2) \in \mathbb{R}^2: \beta x_1 + x_2 \leq 0\} \cup \{
  (x_1,x_2) \in \mathbb{R}^2: x_1 + \beta x_2 \leq 0\}$ for some
  $\beta \geq 0.$ Then
  $\textnormal{cl}\left(\{x \in S: x(t) \in B \text{ for some } t \in
    [0,T]\}\right)$ equals $A_1 \cup A_2,$ where 
  \begin{align}
    A_1 := \left\{ x \in S: \inf_{t \in [0,T]} (\beta x_1(t)
    + x_2(t)) \leq 0\right\} \text{ and }  A_2 := \left\{ x
    \in S: \inf_{t \in [0,T]} (x_1(t)  + \beta x_2(t)) \leq 0\right\}.  
    \label{A1-A2-Defn}
  \end{align}
\label{Cor-2D-Closure}
\end{corollary}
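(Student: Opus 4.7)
The plan is to decompose the ruin set $B$ into two half-planes and apply Lemma~\ref{Lem-2D-Closure} to each component separately. Specifically, write $B = H_1 \cup H_2$, where
\[
H_1 := \{(x_1,x_2) \in \mathbb{R}^2 : \beta x_1 + x_2 \leq 0\}, \qquad H_2 := \{(x_1,x_2) \in \mathbb{R}^2 : x_1 + \beta x_2 \leq 0\},
\]
and define $D_i := \{x \in S : x(t) \in H_i \text{ for some } t \in [0,T]\}$ for $i = 1,2$. Then the set whose closure we wish to compute equals $D_1 \cup D_2$.

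Next, I would use the standard fact that topological closure distributes over finite unions, so $\mathrm{cl}(D_1 \cup D_2) = \mathrm{cl}(D_1) \cup \mathrm{cl}(D_2)$. Applying Lemma~\ref{Lem-2D-Closure} with the vector $a = (\beta, 1)$ yields $\mathrm{cl}(D_1) = A_1$, and applying it with $a = (1, \beta)$ yields $\mathrm{cl}(D_2) = A_2$. Combining these two identities with the distributivity of closure over unions gives the desired conclusion.

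The only delicate point is that Lemma~\ref{Lem-2D-Closure} is stated under the hypothesis $a_1 a_2 > 0$, which could fail in the boundary case $\beta = 0$. This will not be a serious obstacle: a quick inspection of the proof of Lemma~\ref{Lem-2D-Closure} shows that what is genuinely required is $a_1 + a_2 > 0$ (this quantity appears in the denominator of the $J_1$-ball radius used to verify openness of the complement, and it multiplies $\epsilon$ in the translation argument used to show density). Since in both invocations the second entry of $a$ equals $1$, the condition $a_1 + a_2 > 0$ holds trivially. Hence the lemma applies even when $\beta = 0$, and the argument goes through uniformly in $\beta \in [0,1]$.
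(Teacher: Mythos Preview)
Your proposal is correct and follows exactly the same route as the paper's proof: decompose $B$ into two half-planes, apply Lemma~\ref{Lem-2D-Closure} to each piece, and combine via $\mathrm{cl}(D_1 \cup D_2) = \mathrm{cl}(D_1) \cup \mathrm{cl}(D_2)$. Your treatment is in fact slightly more careful than the paper's, which applies the lemma without commenting on the boundary case $\beta = 0$; your observation that the proof of Lemma~\ref{Lem-2D-Closure} only uses $a_1 + a_2 > 0$ (together with $a_1, a_2 \geq 0$) correctly closes that small gap.
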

\begin{proof}
  It follows from Lemma \ref{Lem-2D-Closure} that
  %\begin{align*}
  $\text{cl}\left( \{ x \in S: \beta x_1(t) + x_2(t) \leq 0 \text{ for
      some } t \in [0,T]\}\right) = A_1$ and
  $\text{cl}\left( \{ x \in S: x_1(t) + \beta x_2(t) \leq 0 \text{ for
      some } t \in [0,T]\}\right) = A_2.$
  %\end{align*}
  Then the statement to verify follows from the fact that the closure
  of the union of two sets equals the union of closures of those two
  sets.
\end{proof}}

{
\begin{lemma}
  For $x,y \in D([0,T], \mathbb{R}^2),$ let $c(x,y)$ be defined as in
  \eqref{Cost-2dim-Ruin}. Let $A = A_1 \cup A_2,$ where $A_1$ and $A_2$
  are defined as in \eqref{A1-A2-Defn}, and $\beta \geq 0.$ Then for
  $x \in S \setminus A,$
  \begin{align*}
    c(x,A) = \frac{1}{1+\beta^2} \left[\inf_{t \in [0,T]} \big( \beta
    x_1(t) + x_2(t) \big)^2 \wedge \inf_{t \in [0,T]} \big( x_1(t) +
    \beta x_2(t) \big)^2\right],
  \end{align*}
  and for $\lambda^\ast \geq 0,$
  $\left\{ x \in S: c(x,A) \leq 1/\lambda^\ast \right\}$ equals
   \begin{align*}
     \left\{ x \in S: \inf_{t \in [0,T]} (\beta x_1(t) + x_2(t)) \leq
     \sqrt{\frac{1+\beta^2}{\lambda^\ast}}\right\} \cup  \left\{ x \in
     S: \inf_{t \in [0,T]} ( x_1(t) + \beta x_2(t)) \leq
     \sqrt{\frac{1+\beta^2}{\lambda^\ast}}\right\}
   \end{align*}
\label{Lem-Sym-2dim-Ruin}
\end{lemma}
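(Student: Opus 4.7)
My plan is to compute $c(x, A) = \min\{c(x, A_1), c(x, A_2)\}$ by first eliminating the time-reparametrisation infimum in the definition of $c,$ and then reducing each $c(x, A_i)$ to a minimum-norm translation problem in $\mathbb{R}^2$ solvable by Cauchy--Schwarz. The symmetry between $A_1$ and $A_2$ (swap of coordinates) means I only need to handle $c(x, A_1)$ explicitly.

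For the time-change reduction, note that every $\lambda \in \Lambda$ is a bijection of $[0,T]$ onto itself, so $\inf_t(\beta y_1(t) + y_2(t)) = \inf_s(\beta y_1(\lambda(s)) + y_2(\lambda(s))),$ whence $y \in A_1 \iff y \circ \lambda \in A_1;$ that is, $A_1$ is invariant under right-composition with $\Lambda.$ Substituting $z = y \circ \lambda$ decouples the two infima in the definition of $c(x, A_1),$ and the choice $\lambda = e$ makes $\|\lambda - e\|_\infty$ vanish, yielding
\[
c(x, A_1) \;=\; \inf_{z \in A_1}\; \sup_{t \in [0,T]} \|x(t) - z(t)\|_2^2.
\]

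For $x \notin A_1,$ set $\alpha_1 := \inf_t(\beta x_1(t) + x_2(t)) > 0.$ For the upper bound, the candidate $z(t) := x(t) - \alpha_1 (\beta, 1)/(1 + \beta^2)$ satisfies $\inf_t(\beta z_1(t) + z_2(t)) = 0$ (so $z \in A_1$) and $\sup_t \|x(t) - z(t)\|_2^2 = \alpha_1^2/(1 + \beta^2).$ For the matching lower bound, given $z \in A_1$ and $\epsilon > 0,$ I pick $t_0 \in [0, T]$ with $\beta z_1(t_0) + z_2(t_0) \leq \epsilon,$ which gives $\beta (x_1(t_0) - z_1(t_0)) + (x_2(t_0) - z_2(t_0)) \geq \alpha_1 - \epsilon;$ Cauchy--Schwarz then yields $\sqrt{1 + \beta^2}\, \|x(t_0) - z(t_0)\|_2 \geq \alpha_1 - \epsilon.$ Letting $\epsilon \downarrow 0$ and then taking $\inf_{z \in A_1}$ gives $c(x, A_1) \geq \alpha_1^2/(1 + \beta^2).$ Exchanging coordinates handles $A_2,$ and the minimum of the two produces the stated formula for $c(x, A).$

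For the second assertion, the level set decomposes as $\{x : c(x, A) \leq 1/\lambda^\ast\} = \{x : c(x, A_1) \leq 1/\lambda^\ast\} \cup \{x : c(x, A_2) \leq 1/\lambda^\ast\}.$ For $x \notin A_1,$ $c(x, A_1) \leq 1/\lambda^\ast$ is equivalent to $\inf_t(\beta x_1(t) + x_2(t)) \leq \sqrt{(1 + \beta^2)/\lambda^\ast}$ by taking square roots of the positive quantity $\alpha_1;$ for $x \in A_1,$ both sides of this equivalence hold automatically since $\inf_t(\beta x_1(t) + x_2(t)) \leq 0.$ The analogous statement for $A_2$ completes the proof. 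The anticipated technical subtlety is the c\`{a}dl\`{a}g non-attainment of $\inf_t$ for $z \in A_i,$ which is what forces the $\epsilon$-approximation in the lower bound rather than a direct variational argument; beyond that the proof is a coordinated application of Cauchy--Schwarz in $\mathbb{R}^2.$
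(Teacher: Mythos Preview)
Your proof is correct and follows essentially the same approach as the paper: eliminate the time change via the $\Lambda$-invariance of $A_i$, produce an explicit translate of $x$ along the direction $(\beta,1)$ for the upper bound, and use an $\epsilon$-approximate minimiser $t_0$ together with the point-to-hyperplane distance in $\mathbb{R}^2$ for the lower bound. The only cosmetic differences are that the paper adds an extra $\epsilon$ in the upper-bound translate (which you avoid by landing exactly on the boundary $\inf_t(\beta z_1+z_2)=0$), and the paper phrases the lower bound via the auxiliary function $g_\epsilon(z)=\inf_{\beta y_1+y_2\le\epsilon}\|z-y\|_2^2$ rather than invoking Cauchy--Schwarz by name; these are the same computation.
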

\begin{proof}
  Let us focus on determining $c(x,A_1) = \inf_{y \in A_1} c(x,y)$ for
  $x \notin A_1.$ Given $x \notin A_1$ and $\epsilon > 0,$ we first
  define $\tilde{x}_{\epsilon}(t) := x(t) - (b+\epsilon)u,$ where
  $b := \inf_{t \in [0,T]} (\beta x_1(t) + x_2(t))/\sqrt{1+\beta^2} >
  0$ and $u$ is the unit vector (in $\ell_2$-norm) along the direction
  $[\beta,1].$ As
  \[\beta \tilde{x}_{\epsilon,1}(t) + \tilde{x}_{\epsilon,2}(t) = \beta
    x_1(t) + x_2(t) - (b + \epsilon)\sqrt{1 + \beta^2} \leq 0 \quad
    \text{ for some } t \leq T,\] we have
  $\tilde{x}_\epsilon := (\tilde{x}_{\epsilon,1},
  \tilde{x}_{\epsilon,2})\in A_1.$ As a result, 
  \begin{align}
    c(x,A_1) \leq c(x,\tilde{x}_\epsilon) = \Vert (b + \epsilon)
    u\Vert_2^2 = \left( \inf_{t \in [0,T]}\frac{\beta
    x_1(t) + x_2(t)}{\sqrt{1+\beta^2}} + \epsilon \right)^2
    \label{Inter-2dim-hyperplane}
  \end{align}
  for $x \notin A_1.$ Next, using the same line of reasoning as in the
  proof of Lemma \ref{Lem-Ruin-Prob-BM-App}, one can restrict to time
  changes $\lambda(t) = t$ in the computation of lower bound.
  Then, % it follows
  % from the definition of $c(x,y)$ that
  \begin{align*} 
    c(x,A_1) = \inf_{y \in A_1} \sup_{t \in [0,T]} \vert \vert x(t) -
  y(t) \vert \vert_2^2  
%\left(\vert  x_1(t) -
               % y_1(t) \vert^p + \vert x_2(t) - y_2(t) \vert^p
               % \right)\\
    \geq \inf_{y \in A_1} \sup_{t: \beta y_1(t) + y_2(t) \leq
    \epsilon} \vert \vert x(t) - y(t) \vert \vert_2^2, 
\end{align*}
for any $\epsilon > 0.$ For $z \in \mathbb{R}^2,$ if we let
$g_\epsilon(z) = \inf_{\{y \in \mathbb{R}^2: \beta y_1 + y_2 \leq
  \epsilon\}} \vert \vert z - y \vert \vert_2^2,$ then
\begin{align*}
  c(x,A_1)  \geq \inf_{y \in A_1} \sup_{t: \beta y_1(t) + y_2(t) \leq
  \epsilon } g_\epsilon(x(t)). 
\end{align*}
Next, for $z = (z_1,z_2)$ such that $\beta z_1 + z_2 > \epsilon,$ observe that
$g_\epsilon(z) = (\beta z_1 + z_2-\epsilon)^2/(1+\beta^2).$ Therefore,
\begin{align*}
  c(x,A_1)  \geq \inf_{y \in A_1} \sup_{t: \beta y_1(t) + y_2(t) \leq
  \epsilon} \frac{(\beta x_1(t) + x_2(t) - \epsilon)^2}{1+\beta^2} \geq \inf_{t
  \in [0,T]} \frac{(\beta x_1(t) + x_2(t) - \epsilon)^2}{1+\beta^2}
\end{align*}
for every $\epsilon$ small enough. As $\epsilon$ can be arbitrarily
small, combining the upper bound in \eqref{Inter-2dim-hyperplane} with
the above lower bound results in
\begin{align*}
  c(x,A_1) = \inf_{t \in [0,T]} \frac{\big( \beta x_1(t) + x_2(t)
  \big)^2}{1+\beta^2} \quad\text{ for } x \notin A_1.
\end{align*}
Consequently, $\{x \in S: c(x,A_1) \leq 1/\lambda^\ast\}$ =
$A_1 \cup \{x \in S: c(x,A_1) \in (0,1/\lambda^\ast)\},$ which is
simply,
\begin{align*}
  \left\{ x \in S: \inf_{t \in [0,T]} \big( \beta x_1(t) + x_2(t) 
  \big) \leq \sqrt{\frac{1 + \beta^2}{\lambda^\ast}}\right\} \quad \text{
  for } x \notin A_1.
\end{align*}
Similarly, one can show that
$c(x,A_2) = \inf_{t \in [0,T]} (x_1(t) + \beta x_2(t))^2/(1+\beta^2)$
and a similar expression as above for
$\{x \in S: c(x,A_2) \leq 1/\lambda^\ast\}.$ As $A = A_1 \cup A_2,$ it
is immediate that $c(x,A) = c(x,A_1) \wedge c(x,A_2)$ and
$\{x \in S: c(x,A) \leq \frac{1}{\lambda^\ast}\}$ is the union of two
sets $\{x \in S: c(x,A_i) \leq {1}/{\lambda^\ast}\}, i = 1,2,$ thus
verifying both the claims.  \hfill$\Box$
\end{proof}}

% \begin{lemma}
%   Let $S = D([0,T], \mathbb{R}^2)$ and $c(x,y)$ be defined as in
%   \eqref{Cost-2dim-Ruin}. For $\beta \in [0,1],$ let 
%   % \[B = \{(x_1,x_2) \in \mathbb{R}^2: \beta x_1 + x_2 \leq 0 \} \cup
%   % \{(x_1,x_2) \in \mathbb{R}^2: x_1 +\beta x_2 \leq 0 \}\]
%   $A = \{ x \in S: x_t \in B \text{ for some } t \in [0,T] \}$
%   Then, for $x \in S \setminus A,$
%   \[ c(x,A) = \frac{1}{\beta^2 + 1}\left[\inf_{t \in [0,T]} \big( \beta
%     x_1(t) + x_2(t) \big)^2 \wedge \inf_{t \in [0,T]} \big( 
%     x_1(t) + \beta x_2(t) \big)^2\right]\] and
%   $\{ x \in S: c(x,A) \leq \frac{1}{\lambda^\ast}\}$ equals
%   \begin{align}
%     \left\{ x \in S: \inf_{t \in [0,T]}(\beta x_1 (t) + x_2(t)) \leq
%     \frac{1}{\sqrt{\lambda^\ast}}\right\} \cup    \left\{ x \in S:
%     \inf_{t \in [0,T]}(x_1 (t) + \beta x_2(t)) \leq 
%     \frac{1}{\sqrt{\lambda^\ast}}\right\}. 
%     \label{2D-Soln}
%    \end{align}
%   \label{Lem-Sym-2dim-Ruin}
% \end{lemma}

\subsection{Technical results used in Section \ref{Sec-Duality-Proof}
  to complete the proof of Theorem \ref{THM-STRONG-DUALITY}.}
{\begin{lemma} Let $S \times S$ be a Polish space that is compact, and
    $f:S \rightarrow \mathbb{R}$ is upper semicontinuous. Suppose that
    $\pi \in M(S \times S),$ with the Jordan decomposition
    $\pi = \pi^+ - \pi^-$ comprising positive measures $\pi^+$ and
    $\pi^-,$ is such that $\pi^+(A) = 0 < \pi^-(A) < \infty$ for some
    $A \in \mathcal{B}(S \times S).$ Then, \begin{align*} \inf \left\{
        \int gd\pi: g \in C_b(S \times S), g(x,y) \geq f(y) \right\} =
      -\infty.  \end{align*} \label{Lemma-Ruling-Out-Negative-Measures} \end{lemma}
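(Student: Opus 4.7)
\textbf{Proof plan for Lemma \ref{Lemma-Ruling-Out-Negative-Measures}.}
The plan is to perturb a fixed continuous upper envelope of $f$ by a large positive multiple of a continuous bump function $\phi$ chosen so that $\int \phi\, d\pi < 0$. Since $f$ is upper semicontinuous on the compact space $S\times S$, it is bounded above by some constant $M<\infty$; the constant function $g_0\equiv M$ then lies in $C_b(S\times S)$ and satisfies $g_0(x,y)\ge f(y)$. For any nonnegative $\phi\in C_b(S\times S)$ and any $t\ge 0$, the perturbed function $g_t := g_0 + t\phi$ is in $C_b(S\times S)$ and still dominates $f$, and
\[
\int g_t\, d\pi \;=\; \int g_0\, d\pi \;+\; t\Bigl(\int \phi\, d\pi^+ - \int \phi\, d\pi^-\Bigr).
\]
So it suffices to exhibit a single nonnegative $\phi\in C_b(S\times S)$ with $\int\phi\, d\pi^+<\int\phi\, d\pi^-$; sending $t\to\infty$ then yields $\int g_t\, d\pi\to-\infty$.

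The construction of $\phi$ is the core step, and here I would use the fact that finite Borel measures on a compact metric space are Radon. By inner regularity of $\pi^-$ on the Borel set $A$, choose a compact set $K\subseteq A$ with $\pi^-(K)\ge \tfrac{1}{2}\pi^-(A)>0$. Because $\pi^+(K)\le \pi^+(A)=0$, outer regularity of $\pi^+$ provides an open neighborhood $U\supseteq K$ with $\pi^+(U)<\pi^-(K)$. Since $S\times S$ is a metric (hence normal) space, Urysohn's lemma delivers a continuous $\phi:S\times S\to[0,1]$ with $\phi\equiv 1$ on $K$ and $\phi\equiv 0$ off $U$. Then
\[
\int\phi\, d\pi^+ \;\le\; \pi^+(U) \;<\; \pi^-(K) \;\le\; \int\phi\, d\pi^-,
\]
which is the desired strict inequality.

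The main obstacle, such as it is, is making sure the Jordan decomposition allows the two regularity arguments to be applied independently; this is immediate here because $\pi^+$ and $\pi^-$ are each finite nonnegative Borel measures on the compact metric space $S\times S$, hence Radon. With that observation, combining the Urysohn-based construction of $\phi$ with the one-parameter family $g_t=g_0+t\phi$ gives $\inf\{\int g\, d\pi : g\in C_b(S\times S),\, g\ge f\}=-\infty$, which is exactly the conclusion of the lemma.
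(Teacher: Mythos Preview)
Your proof is correct and follows essentially the same approach as the paper's: both bound $f$ above by a constant using upper semicontinuity on the compact space, invoke inner/outer regularity of $\pi^-$ and $\pi^+$ to find a compact $K\subseteq A$ and an open $U\supseteq K$ with $\pi^+(U)<\pi^-(K)$, apply Urysohn's lemma to obtain a $[0,1]$-valued bump supported in $U$ and equal to $1$ on $K$, and then push the integral to $-\infty$ via $g_t = M + t\phi$. Your presentation is, if anything, slightly cleaner in separating the regularity steps for $\pi^-$ and $\pi^+$.
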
 \begin{proof}
We first observe that if $\pi$ is not
    % non-negative, there exists $A \in \mathcal{B}(S \times S)$ such
    %that
    % $\pi^+(A) = 0 < \pi^-(A) < \infty$ for the Jordan decomposition
    % $\pi = \pi^+ - \pi^-$ comprising finite positive measures
    %$\pi^+$
    % and $\pi^-$ (see, for example, Theorem 36.7 and Definition 36.8
    %in
    % \citet{aliprantis1998principles} for the existence of such a
    % decomposition). Next,
    As any finite Borel measure on a Polish space is regular, there
    exists a compact set $K_\epsilon$ and an open set $O_\epsilon$
    such that $K_\epsilon \subseteq A \subseteq O_\epsilon$
    and
    \[\pi^-(O_\epsilon) -\epsilon \ \leq \ \pi^-(A) \ \leq \ \pi^-(K_\epsilon)
      + \epsilon, \quad \text{ and } \quad \pi^+(O_\epsilon) \ \leq \ 
      \epsilon,\] for any given $\epsilon > 0$ (see Lemma 18.5 in
    \citet{aliprantis1998principles}). Further, as $S \times S$ is
    compact, Urysohn's lemma (see, for
    example, Theorem 10.8 in \citet{aliprantis1998principles})
    guarantees us the existence of a continuous function
    $h: S \times S \rightarrow [0,1]$ such that $h(x,y) = 1$ for all
    $x \in K_\epsilon$ and $h(x,y) = 0$ for all $x \notin O_\epsilon.$
    In that case, choosing $\epsilon < \pi^-(A)/2,$ we have
    $\inf_{n \geq 1}\int g_nd\pi = -\infty$ for the sequence of
    continuous functions $g_n(x,y) = nh(x,y) + \sup_{x \in S} f(x).$
    This is because, $\sup_{x \in S} f(x) < \infty$ (recall that $f$
    is upper semicontinuous and $S$ is compact), and
  \begin{align*}
    \int hd\pi = \int hd\pi^+ - \int hd\pi^- \  \leq \ \pi^+(O_\epsilon) -
    \pi^-(K_\epsilon) \ \leq \ 2\epsilon - \pi^-(A) \ < \ 0 
  \end{align*}
  when $\epsilon < \pi^-(A)/2.$  
  As $g_n(x,y) \geq f(y)$ for all
  $x,y$ in S, it follows that
  $\inf_{g \in D} \int gd\pi \leq \inf_n \int g_nd\pi = -\infty$ as
  well, whenever there exists $A$ such that $\pi^-(A) > 0.$
  \hfill$\Box$
\end{proof}}
\begin{lemma}
  Suppose that Assumptions (A1) and (A2) are in force. Let
  $(S_n: n \geq 1)$ be an increasing sequence of subsets of $S,$ and
  $(\lambda_n: n \geq 1)$ be a real-valued sequence satisfying
  $\lambda_n \rightarrow \lambda^\ast,$ for some
  $\lambda^\ast \geq 0,$ as $n \rightarrow \infty.$ Then for any
  $x \in S,$
  \begin{align*}
    \varliminf_n \sup_{y \in S_n} \big\{ f(y) - \lambda_n c(x,y)\big\}
    \geq \sup_{y \in \cup_n S_n} \left\{ f(y) - \lambda^* c(x,y)
    \right\}. 
  \end{align*}
  \label{Lem-Lim-Int-Interchange}
\end{lemma}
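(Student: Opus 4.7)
Fix $x \in S$ and denote the right-hand side by $L \in \overline{\mathbb{R}}.$ The plan is simply to exhibit a near-optimal test point $y^\ast \in \cup_n S_n$ for the supremum defining $L,$ to observe that such a $y^\ast$ must eventually lie in every $S_n$ because the sequence $(S_n)$ is increasing, and then to pass the convergence $\lambda_n \to \lambda^\ast$ through the resulting affine function of $\lambda.$ Note that by Assumption (A1), $c(x, y^\ast) \in [0, \infty),$ so the map $\lambda \mapsto f(y^\ast) - \lambda c(x, y^\ast)$ is continuous on all of $\mathbb{R}.$

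More precisely, fix any $M \in \mathbb{R}$ with $M < L.$ By the definition of the supremum over $\cup_n S_n,$ there exists $y^\ast \in \cup_n S_n$ such that $f(y^\ast) - \lambda^\ast c(x, y^\ast) > M.$ Since $(S_n)$ is increasing and $y^\ast \in \cup_n S_n,$ there exists $N$ such that $y^\ast \in S_n$ for every $n \geq N.$ Consequently, for every $n \geq N,$
\[
\sup_{y \in S_n}\bigl\{ f(y) - \lambda_n c(x, y)\bigr\} \ \geq \ f(y^\ast) - \lambda_n c(x, y^\ast).
\]
Taking $\varliminf_n$ on both sides and using $\lambda_n \to \lambda^\ast$ together with the finiteness of $c(x, y^\ast),$
\[
\varliminf_{n \to \infty} \sup_{y \in S_n}\bigl\{ f(y) - \lambda_n c(x, y)\bigr\} \ \geq \ \lim_{n \to \infty} \bigl( f(y^\ast) - \lambda_n c(x, y^\ast) \bigr) \ = \ f(y^\ast) - \lambda^\ast c(x, y^\ast) \ > \ M.
\]
Since $M < L$ was arbitrary, the displayed liminf is at least $L,$ which is the desired inequality. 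The argument covers both the finite case (by letting $M \uparrow L$) and the case $L = +\infty$ (by letting $M \to +\infty$); if $L = -\infty,$ the conclusion is trivial.

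The only thing worth flagging is the finiteness of $c(x, y^\ast),$ which is the reason the perturbation in $\lambda$ does not accumulate any error in the limit; this is guaranteed directly by Assumption (A1). No continuity assumption on $c(x, \cdot)$ is needed because we do not vary $y.$
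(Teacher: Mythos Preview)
Your proof is correct and considerably more direct than the paper's. The paper proceeds by first rewriting the $\varliminf$ as $\sup_n \inf_{m\ge n}\sup_{y\in S_m} g(y,\lambda_m)$, then freezing the inner domain at $S_n$, swapping an inf and a sup, and reducing to $\sup_n \sup_{y\in S_n} g(y,\bar\lambda_n)$ with $\bar\lambda_n=\sup_{k\ge n}\lambda_k$; it then spends a separate two-case argument (finite versus infinite right-hand side) to show this last quantity equals $\sup_{y\in\cup_n S_n} g(y,\lambda^\ast)$. Your approach bypasses all of this by picking a single near-optimal $y^\ast$, using that it eventually sits in every $S_n$, and passing $\lambda_n\to\lambda^\ast$ through the affine map $\lambda\mapsto f(y^\ast)-\lambda c(x,y^\ast)$, whose continuity is immediate from the finiteness of $c(x,y^\ast)$ guaranteed by Assumption~(A1). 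The paper's route has the minor aesthetic advantage of exposing the monotonicity structure explicitly, but your argument is shorter, requires no case split, and uses exactly the same hypotheses.
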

\begin{proof}
  {Pick any $x \in S.$ For brevity, let
  $g(y,\lambda) := f(y) - \lambda c(x,y)$ (hiding the dependence on
  the fixed choice $x \in S$) and
  $\bar{\lambda}_m := \sup_{k \geq m} \lambda_k.$ The observations
  that\\
  %\begin{center}
    (A) $g(y,\lambda)$ is a non-increasing function in $\lambda,$
    and\\ (B)
    $ \sup_{y \in S_n}g(y,\lambda) \leq \sup_{y \in S_{n+1}}
    g(y,\lambda)$ for $n \geq 1,$\\
  %\end{center}
    will be used repeatedly throughout this proof. While Observation
    (A) follows from the fact that $S_n \subseteq S_{n+1},$
    Observation (B) is true because $c(\cdot,\cdot)$ is
    non-negative. The quantity of interest, 
  \begin{align*}
    \varliminf_n \sup_{y \in S_n} \big\{ f(y) - \lambda_n c(x,y)\big\}
    = \sup_{n > 0} \inf_{m \geq n} \sup_{y \in S_m} g(y,\lambda_m)
    \geq \sup_{n > 0} \inf_{m \geq n} \sup_{y \in S_n}  g(y,\lambda_m)
    \geq \sup_{n > 0} \sup_{y \in S_n} \inf_{m \geq n} g(y,
    \lambda_m), 
  \end{align*}
  where the first inequality follows from Observation (B), and the
  second inequality from the exchange of inf and sup operations. As
  $\inf_{m \geq n}g(y,\lambda_m) = f(y) - (\sup_{m \geq n}\lambda_m)
  c(x,y) = g(y,\bar{\lambda}_n),$ we obtain
  \begin{align}
    \varliminf_n \sup_{y \in S_n} \big\{ f(y) - \lambda_n c(x,y)\big\}
    \geq \sup_{n > 0} \sup_{y \in S_n} g(y,\bar{\lambda}_n) = \sup_{y
    \in \cup_n S_n} g(y,\lambda^\ast),
    \label{Liminf-Lemma-Inter}
  \end{align}
  where the rest of this proof is devoted to justify the last equality
  in \eqref{Liminf-Lemma-Inter}: As
  $\sup_{y \in S_n}g(y,\bar{\lambda}_n)$ is non-decreasing in $n,$ it
  is immediate that
  $\sup_n \sup_{y \in S_n} g(y,\bar{\lambda}_n) \leq \bar{g} :=
  \sup_{y \in \cup_n S_n} g(y,\lambda^\ast).$ To show that
  $\sup_n \sup_{y \in S_n} g(y,\bar{\lambda}_n)$ indeed equals
  $\bar{g},$ we use $\hat{S}$ to denote $\hat{S} := \cup_n S_n,$
  and consider distinct cases:\\
  \textsc{Case 1: $\bar{g} < \infty.$} If $\bar{g}$ is finite, then
  there exists $y_\epsilon \in \hat{S}$ such that
  $g(y_\epsilon,\lambda^\ast) \geq \bar{g} - \epsilon/2$ for any
  $\epsilon > 0.$ Further, as $g(y_\epsilon,\lambda)$ is continuous in
  $\lambda,$ there exists $n_0$ large enough such that
  $g(y_\epsilon,\bar{\lambda}_n) \geq g(y_\epsilon,\lambda^\ast) -
  \epsilon/2,$ and consequently,
  $g(y_\epsilon,\bar{\lambda}_n) \geq \bar{g} - \epsilon,$ for all
  $n \geq n_0.$ Therefore, for all $n > n_0$ satisfying
  $y_\epsilon \in S_n,$ it follows that
  $\sup_{y \in S_n} g(y,\bar{\lambda}_n) \geq g(y_\epsilon,
  \bar{\lambda}_n) \geq \bar{g} - \epsilon.$ Since $\epsilon$ is
  arbitrary, we obtain
  $\sup_n \sup_{y \in S_n} g(y,\bar{\lambda}_n) = \bar{g}$ whenever
  $\bar{g}$ is finite. \\
  \textsc{Case 2: $\bar{g} = \infty.$} If
  $\bar{g} = \sup_{y \in \hat{S}} g(y,\lambda^\ast) = \infty,$ then
  there exists a strictly increasing subsequence $(n_k: k \geq 1)$ of
  natural numbers such that
  $\sup_{y \in \hat{S}} g(y,\bar{\lambda}_{n_k}) > k$ for all $k$
  (this is because, being a pointwise supremum of family of continuous
  functions of $\lambda,$ $\sup_{y \in \hat{S}} g(y,\lambda)$ is a
  lower semicontinuous function of $\lambda$). Next, as
  $\sup_{y \in \hat{S}} g(y,\bar{\lambda}_{n_k}) > k,$ there exists a
  sequence $(y_{k}: k \geq 1)$ comprising elements of $\hat{S}$ such
  that $g(y_{k},\bar{\lambda}_{n_k}) \geq k/2$ for all $k \geq 1.$ As
  $(S_n: n \geq 1)$ is a sequence of sets increasing to $\hat{S}$, one
  can identify a strictly increasing subsequence $(m_k: k \geq 1)$ of
  natural numbers such that $y_{k} \in S_{m_k}$ and consequently,
  $\sup_{y \in S_{m_k}} g(y, \bar{\lambda}_{n_k}) \geq k/2$ for all
  $k \geq 1.$ Next, if we let $l_k = n_k \vee m_k,$ then
  $\bar{\lambda}_{l_k} \leq \bar{\lambda}_{n_k}, S_{m_k} \subseteq
  S_{l_k},$ and it follows from Observations (A) and (B) that
  $\sup_{y \in S_{l_k}} g(y, \bar{\lambda}_{l_k}) \geq k/2$ as
  well. This results in a non-decreasing sequence $(l_k: k \geq 1)$
  with $l_k \rightarrow \infty$ such that
  $\sup_{y \in S_{l_k}} g(y, \bar{\lambda}_{l_k}) \geq k/2,$ thus
  yielding
  $\sup_{n} \sup_{y \in S_n} g(y,\bar{\lambda}_n) = \infty =
  \bar{g}.$}

  The proof is complete because
  $\sup_{y \in \cup_n S_n}g(y,\lambda^\ast)$ in
  \eqref{Liminf-Lemma-Inter} is the desired right hand side.
  \hfill$\Box$
\end{proof}

\subsection{A technical result to add clarity to the notation
  \eqref{Primal-Prob-Interp}.} 
\begin{lemma}
  Suppose that Assumptions (A1) and (A2) are in force. Then, for every
  $\pi \in \Phi_{\mu,\delta},$ there exists a
  $\pi^\prime \in \Phi_{\mu,\delta}$ such that $\pi^\prime$ is
  concentrated on $\{(x,y) \in S \times S: f(y) \geq f(x)\},$ along
  with satisfying,
  \begin{align}
    \int f^+(y)d\pi^\prime(x,y) \geq \int f^+(y)d\pi(x,y) \quad \text{ and
    } \quad \int f^-(y)d\pi^\prime(x,y) \leq \int f^-(x)d\mu(x).
    \label{Modified-Coupling}
    \end{align}
    Further, $\int f(y)d\pi^\prime(x,y) \geq \int f(y)d\pi(x,y),$
    whenever the integral $\int f(y)d\pi(x,y)$ is well-defined.
  \label{Lem-Notn-Clar}
\end{lemma}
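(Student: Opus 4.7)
The plan is to construct $\pi^\prime$ from $\pi$ by a pointwise "swap-to-diagonal" operation: whenever the second coordinate $y$ gives a strictly smaller $f$-value than the first coordinate $x$, replace $y$ by $x$. Formally, let $A := \{(x,y)\in S\times S : f(y)\geq f(x)\}$, which is Borel since $f$ is upper semicontinuous (hence Borel measurable) and the map $(x,y)\mapsto f(y)-f(x)$ is Borel. Let $T:S\times S\to S\times S$ be the measurable map $T(x,y)=(x,x)$, and define
\[
\pi^\prime(\,\cdot\,) := \pi\big(\,\cdot\,\cap A\big) + T_{\#}\bigl(\pi|_{A^c}\bigr)(\,\cdot\,),
\]
where $T_{\#}$ denotes pushforward. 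Equivalently, if $(X,Y)\sim\pi$, then $(X^\prime,Y^\prime)$ defined by $(X^\prime,Y^\prime)=(X,Y)$ on $A$ and $(X^\prime,Y^\prime)=(X,X)$ on $A^c$ has law $\pi^\prime$.

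Next I would verify feasibility. Since the first coordinate is never altered, the marginal of $\pi^\prime$ on the first component is still $\mu$. Using $c(x,x)=0$ from Assumption (A1),
\[
\int c\,d\pi^\prime = \int_{A} c(x,y)\,d\pi(x,y) + \int_{A^c}c(x,x)\,d\pi(x,y) = \int_{A} c(x,y)\,d\pi(x,y) \leq \int c\,d\pi \leq \delta,
\]
so $\pi^\prime\in\Phi_{\mu,\delta}$. Concentration on $\{f(y)\geq f(x)\}$ is immediate: on $A$ the swap is trivial, and on $A^c$ the new pair lies on the diagonal, where $f(y)=f(x)$.

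The two integral inequalities in \eqref{Modified-Coupling} follow from pointwise comparisons on $A^c$. On $A^c$ one has $f(X)>f(Y)$, which implies $f^+(X)\geq f^+(Y)$ and $f^-(X)\leq f^-(Y)$, since $f^+$ is nondecreasing and $f^-$ is nonincreasing in $f$. Therefore
\[
\int f^+(y)\,d\pi^\prime = \int_{A} f^+(y)\,d\pi + \int_{A^c} f^+(x)\,d\pi \geq \int_{A} f^+(y)\,d\pi + \int_{A^c} f^+(y)\,d\pi = \int f^+(y)\,d\pi,
\]
and, using $f^-(Y)\leq f^-(X)$ on $A$ (trivially, but more importantly on $A^c$ after the swap) together with the fact that the first marginal is $\mu$,
\[
\int f^-(y)\,d\pi^\prime = \int_{A} f^-(y)\,d\pi + \int_{A^c} f^-(x)\,d\pi \leq \int_{A} f^-(x)\,d\pi + \int_{A^c} f^-(x)\,d\pi = \int f^-(x)\,d\mu.
\]
The same pointwise comparisons simultaneously yield $\int f^-\,d\pi^\prime \leq \int f^-\,d\pi$, which combined with $\int f^+\,d\pi^\prime\geq \int f^+\,d\pi$ gives $\int f\,d\pi^\prime \geq \int f\,d\pi$ whenever the latter is well-defined.

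There is no substantive obstacle here; the only routine points to check are Borel measurability of the swap set $A$ and the swap map $T$, both of which are automatic from the hypotheses on $f$ and $c$. The argument is essentially a measure-theoretic rewriting of the obvious coupling idea: moving the "bad" mass in $A^c$ onto the diagonal costs nothing in the transport metric (by $c(x,x)=0$), preserves the first marginal, and can only improve both sides of the $f^+/f^-$ comparison.
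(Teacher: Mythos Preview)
Your proof is correct and follows essentially the same approach as the paper: both define $\pi'$ by the ``swap-to-diagonal'' coupling $(X',Y')=(X,Y)\mathbf{1}_A+(X,X)\mathbf{1}_{A^c}$ and then verify feasibility and the integral inequalities by splitting over $A$ and $A^c$. Your verification of the $f^+$ and $f^-$ bounds is in fact a bit more direct than the paper's, which passes through an intermediate rewriting of the integration regions as $\{f^+(x)\le f^+(y)\}$ and $\{f^-(x)\ge f^-(y)\}$, but the underlying idea is identical.
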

\begin{proof}
 Let $(X,Y)$ be jointly distributed
  according to $\pi.$ Using $(X,Y),$ let us define a new jointly
  distributed pair $(X',Y'),$ with joint distribution denoted by
  $\pi^\prime,$ as follows:
  \begin{align}
    X' := X \quad\text{ and } \quad Y' := Y I\left( f(X) \leq
    f(Y) \right) + X I\left(f(X) > f(Y) \right),
    \label{Coup-Defn}
  \end{align}
  where $I(\cdot)$ denotes the indicator function. As $X' = X,$ the
  marginal distribution of $X'$ is $\mu.$ Further, it follows from the
  definition of $(X',Y')$ in \eqref{Coup-Defn} that
  \begin{align*}
    \int cd\pi^\prime = \int_{\{f(x) \leq f(y)\}} \hspace{-20pt} c(x,y)d\pi(x,y) +
                        \int_{\{f(x) > f(y)\}} \hspace{-20pt} c(x,x)d\pi(x,y) =
    \int_{\{f(x) \leq f(y)\}} \hspace{-20pt} c(x,y)d\pi(x,y),
  \end{align*}
 %  \begin{align*}
 %    E_{\pi'}[c(X',Y')] &= E_{\pi}\left[ c(X,Y)
 %    I \left( f(X) \leq f(Y) \right) \right] +
 %    E_{\pi}\left[ c(X,X)
 %    I \left( f(X) > f(Y) \right) \right]\\
 % &= E_{\pi}\left[ c(X,Y)
 %   I \left( f(X) \leq f(Y) \right) \right],
 %  \end{align*}
  because $c(x,x) = 0$ for every $x$ in $S.$ In addition, as
  $c(\cdot,\cdot)$ is non-negative, it follows that
  $\int cd\pi^\prime \leq \int cd\pi \leq \delta.$
%  $E_{\pi'}[c(X',Y')] \leq E_{\pi}[c(X,Y)] \leq \delta.$ Therefore,
  Therefore, $\pi \in \Phi_{\mu,\delta}.$ Next,
    \begin{align*}
      \int f^+(y)d\pi^\prime(x,y) 
      &= \ \ \int_{\{f(x) \leq f(y)\}} \hspace{-20pt}
        f^+(y)d\pi(x,y) + \int_{\{f(x) > f(y)\}} \hspace{-20pt}
        f^+(x)d\pi(x,y)\\
      &= \ \int_{\{f^+(x) \leq f^+(y)\}} \hspace{-20pt}
        f^+(y)d\pi(x,y) + \int_{\{f^+(x) > f^+(y)\}} \hspace{-20pt}
        f^+(x)d\pi(x,y)\\
      &\geq \int_{\{f^+(x) \leq f^+(y)\}} \hspace{-20pt}
        f^+(y)d\pi(x,y) + \int_{\{f^+(x) > f^+(y)\}} \hspace{-20pt}
        f^+(y)d\pi(x,y) = \int f^+(y)d\pi(x,y), \quad\quad \text{ and
        }\\
     \int f^-(y)d\pi^\prime(x,y) 
      &= \ \ \int_{\{f(x) \leq f(y)\}} \hspace{-20pt}
        f^-(y)d\pi(x,y) + \int_{\{f(x) > f(y)\}} \hspace{-20pt}
        f^-(x)d\pi(x,y)\\
      &= \ \int_{\{f^-(x) \geq f^-(y)\}} \hspace{-20pt}
        f^-(y)d\pi(x,y) + \int_{\{f^-(x) < f^-(y)\}} \hspace{-20pt}
        f^-(x)d\pi(x,y)\\
      &\leq \int_{\{f^-(x) \geq f^-(y)\}} \hspace{-20pt}
        f^-(x)d\pi(x,y) + \int_{\{f^-(x) < f^-(y)\}} \hspace{-20pt}
        f^-(x)d\pi(x,y) = \int f^-(x)d\pi(x,y) = \int f^-d\mu, 
  \end{align*}
  As $\int f^-d\mu < \infty,$ $\int f(y)d\pi^\prime(x,y)$ is always
  well-defined. Further, when $\int f(y)d\pi(x,y)$ is also well
  defined, it is immediate from the definition of $\pi^\prime$ that, 
    \begin{align*}
    \int f(y)d\pi^\prime(x,y) &= \ \ \int_{\{f(x) \leq f(y)\}} \hspace{-20pt}
    f(y)d\pi(x,y) + \int_{\{f(x) > f(y)\}} \hspace{-20pt}
                           f(x)d\pi(x,y)\\
    &\geq \int_{\{f(x) \leq f(y)\}} \hspace{-20pt}
    f(y)d\pi(x,y) + \int_{\{f(x) > f(y)\}} \hspace{-20pt}
      f(y)d\pi(x,y) = \int f(y)d\pi(x,y),
  \end{align*}
  thus verifying all the claims made in the statement. 
\end{proof}

{
  \begin{corollary}
    Suppose that Assumptions (A1) and (A2) are in force. Let
    $\nu \in {P}(S)$ be such that $\int f^- d\nu = \infty$ and
    $d_c(\mu,\nu) \leq \delta$ for a given reference probability
    measure $\mu \in {P}(S).$ Then there exists $\nu' \in P(S)$ such
    that $\int f^+ d\nu' \geq \int f^+ d\nu,$
    $\int f^-d\nu' < \infty = \int f^-d\nu,$ and
    $d_c(\mu,\nu') \leq \delta.$ 
  \label{Cor-Notn-Clar}
\end{corollary}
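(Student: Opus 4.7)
The plan is to derive Corollary \ref{Cor-Notn-Clar} as an essentially immediate consequence of Lemma \ref{Lem-Notn-Clar}, by passing from probability measures to transport plans, applying the lemma, and then passing back via marginals.

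First, I would use the existence of an optimal coupling to lift $\nu$ to a transport plan. Since $c$ is nonnegative and lower semicontinuous and $d_c(\mu,\nu) \leq \delta,$ Theorem 4.1 of \citet{villani2008optimal} gives a coupling $\pi \in \Pi(\mu,\nu)$ with $\int c\, d\pi = d_c(\mu,\nu) \leq \delta,$ so $\pi \in \Phi_{\mu,\delta}.$ Note that the second marginal of $\pi$ is $\nu,$ so $\int f^+(y)\, d\pi(x,y) = \int f^+ d\nu$ and $\int f^-(y)\, d\pi(x,y) = \int f^- d\nu = \infty.$

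Next, I would apply Lemma \ref{Lem-Notn-Clar} to $\pi$ to obtain $\pi' \in \Phi_{\mu,\delta}$ satisfying
\[
\int f^+(y)\, d\pi'(x,y) \ \geq \ \int f^+(y)\, d\pi(x,y), \qquad \int f^-(y)\, d\pi'(x,y) \ \leq \ \int f^-(x)\, d\mu(x).
\]
Let $\nu' \in P(S)$ be the second marginal of $\pi',$ so that $\pi' \in \Pi(\mu,\nu')$ and $d_c(\mu,\nu') \leq \int c\, d\pi' \leq \delta.$ Passing to marginals on the right-hand sides of the above inequalities gives $\int f^+ d\nu' \geq \int f^+ d\nu$ and $\int f^- d\nu' \leq \int f^- d\mu,$ which is finite by Assumption (A2). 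In particular, $\int f^- d\nu' < \infty = \int f^- d\nu,$ and all three required conclusions hold.

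The argument is essentially routine once Lemma \ref{Lem-Notn-Clar} is in hand; there is no real obstacle. The only subtlety worth noting is confirming that $\pi'$ produced by the lemma's construction $Y' := Y \mathbf{1}(f(X) \leq f(Y)) + X \mathbf{1}(f(X) > f(Y))$ genuinely has $\mu$ as its first marginal (since $X' = X$) and that the cost inequality $\int c\, d\pi' \leq \int c\, d\pi$ follows from $c(x,x)=0;$ both of these are already recorded in the proof of Lemma \ref{Lem-Notn-Clar} in Appendix \ref{Appendix-Proofs}, so nothing further needs to be verified here.
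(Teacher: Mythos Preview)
Your proposal is correct and follows essentially the same approach as the paper: pick an optimal coupling $\pi \in \Pi(\mu,\nu)$ via Theorem 4.1 of \citet{villani2008optimal}, apply Lemma \ref{Lem-Notn-Clar} to obtain $\pi' \in \Phi_{\mu,\delta}$, and take $\nu'$ to be its second marginal. The paper's proof is essentially identical, differing only in minor phrasing.
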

\begin{proof}
  Let $\pi \in \Pi(\mu,\nu)$ be an optimal coupling that attains the
  infimum in the definition of $d_c(\mu,\nu);$ in other words,
  $d_c(\mu,\nu) = \int cd\pi$
  % \[d_c(\mu,\nu) = \inf\{E_{\pi}[c(X,Y)]: \pi \in \Pi(\mu,\nu)\};\]
  (such an optimal coupling $\pi$ always exists because of the lower
  semicontinuity of $c(\cdot,\cdot),$ see Theorem 4.1 in
  \citet{villani2008optimal}). Since $d_c(\mu,\nu) \leq \delta$ and
  $\pi(\cdot \times S) = \mu(\cdot),$ we have that
  $\pi \in \Phi_{\mu,\delta}.$ Then, according to Lemma
  \ref{Lem-Notn-Clar}, there exists $\pi^\prime \in \Phi_{\mu,\delta}$
  satisfying \eqref{Modified-Coupling}. Consequently, the marginal
  distribution defined by
  $\nu^\prime(\cdot) := \pi^\prime(S \times \cdot)$ satisfies
  $d_c(\mu,\nu^\prime) \leq \delta,$ along with
  $\int f^+d\nu^\prime \geq \int f^+d\nu$ and
  $\int f^-d\nu^\prime \leq \int f^-d\mu < \infty = \int f^-d\nu.$
\end{proof}
}

%  {\begin{remark} The construction of the coupling
%     $(X',Y')$ and the corresponding probability distribution $\nu'$ in
%     the proof of Lemma \ref{Lem-Notn-Clar} leads us to the following
%     key observation: While trying to identify a coupling $\pi$ for
%     $(X,Y)$ that maximizes $E_{\pi}[f(Y)],$ it is useful only to
%     consider couplings with support restricted to the set
%     $\{ (x,y) \in S \times S: f(x) \leq f(y)\}.$ As
%     $E_{\pi}[f^-(Y)] < \infty$ for any such coupling, we essentially
%     mean \eqref{Primal-Prob-Interp} when mentioning the primal problem
%     in Section \ref{Sec-primal-prob}. As the supremum in the left-hand
%     side of \eqref{Primal-Prob-Interp} has scope for ambiguity when
%     $\int fd\nu = \infty - \infty,$ we adopt the notation
%     $\sup\{ \infty, \infty - \infty\} = \infty$ (only while it arises
%     in the primal problem under consideration) to facilitate the
%     interpretation in \eqref{Primal-Prob-Interp}.
%   \label{Rem-Notn-Clar}
% \end{remark}
%}

\end{document}